\DeclareSymbolFont{rsfs}{U}{rsfs}{m}{n}
\DeclareSymbolFontAlphabet{\mathscrsfs}{rsfs}
\newtheorem{theorem}{Theorem}
\newtheorem{definition}[theorem]{Definition}
\newtheorem{lemma}[theorem]{Lemma}
\newtheorem{remark}{Remark}
\newtheorem{assumption}{Assumption}
\newtheorem{proposition}[theorem]{Proposition}
\renewcommand{\P}{\operatorname{\mathbb{P}}}
\newcommand{\E}{\operatorname{\mathbb{E}}}
\newcommand{\N}{\mathbb{N}}
\newcommand{\R}{\mathbb{R}}
\newcommand{\rmd}{\mathrm{d}}
\newcommand{\sign}{\operatorname{sign}}
\DeclareMathOperator{\var}{var}
\newcommand{\Tr}{\mathrm{Tr}}
\newcommand{\cE}{\mathcal E}
\newcommand{\cB}{\mathcal B}
\newcommand{\cA}{\mathcal A}
\newcommand{\cS}{\mathcal S}
\newcommand{\cX}{\mathcal X}
\newcommand{\cF}{{\mathcal F}}
\newcommand{\cG}{{\mathcal G}}
\def\hF{\widehat{F}}
\newcommand{\bsigma}{{\boldsymbol\sigma}}
\newcommand{\bsig}{{\boldsymbol\sigma}}
\newcommand{\eps}{\varepsilon}
\renewcommand{\hat}{\widehat}
\def\err{{\sf err}}
\def\cuU{\mathscrsfs{U}}
\def\Par{{\sf P}}
\def\smaxcut{\mbox{\tiny \rm max-cut}}
\def\sminbisec{\mbox{\tiny \rm min-bis}}
\def\OPT{{\sf OPT}}
\def\MCUT{{\sf MaxCut}}
\def\MBIS{{\sf MinBis}}
\def\sSK{{\tiny \sf SK}}
\def\GOE{{\sf GOE}}
\def\vE{\vec{E}}
\def\di{{\partial i}}
\def\dj{{\partial j}}
\def\dv{{\partial v}}
\def\Ball{{\sf B}}
\def\sfP{{\sf P}}
\def\root{\o}
\def\tu{\tilde{u}}
\def\bx{{\boldsymbol{x}}}
\def\by{{\boldsymbol{y}}}
\def\bz{{\boldsymbol{z}}}
\def\bu{{\boldsymbol{u}}}
\def\b0{{\boldsymbol{0}}}
\def\hz{\hat{z}}
\def\normal{{\sf N}}
\def\ed{\stackrel{{\rm d}}{=}}
\def\cuF{\mathscrsfs{F}}
\def\bA{{\boldsymbol A}}
\def\bQ{{\boldsymbol Q}}
\def\bfone{{\boldsymbol 1}}
\def\id{{\boldsymbol I}}
\def\bomega{{\boldsymbol \omega}}
\def\bbeta{{\boldsymbol \beta}}
\def\hbz{\hat{\boldsymbol z}}
\def\bT{{\boldsymbol T}}
\def\sT{{\sf T}}
\def\tF{\tilde{F}}
\def\txi{\tilde{\xi}}
\def\<{\langle}
\def\>{\rangle}
\def\cL{{\mathcal L}}
\def\ov{{\overline v}}
\def\tY{\tilde{Y}}
\def\tz{\tilde{z}}
\def\tell{\tilde{\ell}}
\def\root{{o}}
\DeclareMathOperator*{\plim}{p-lim}
\numberwithin{equation}{section}
\numberwithin{theorem}{section}
\begin{document}

 \title{Local algorithms for Maximum Cut and Minimum Bisection on locally treelike regular graphs of large degree}

\author{Ahmed El Alaoui\thanks{Department of Statistics and Data Science, Cornell University. Email: ae333@cornell.edu.}, \;\; Andrea Montanari\thanks{Department of Electrical Engineering and Department of Statistics, Stanford University. Email: montanar@stanford.edu.}, \;\; Mark Sellke\thanks{Department of Mathematics, Stanford University. Email: msellke@gmail.com.}}

\date{}
\maketitle

\begin{abstract}
Given a graph $G$ of degree $k$ over $n$ vertices, we consider the problem of computing a 
near maximum cut or a near minimum bisection in polynomial time. For graphs of girth $2L$,
we develop a local message passing algorithm whose complexity is $O(nkL)$, and that 
achieves near optimal cut values among all $L$-local algorithms. Focusing on max-cut,
the algorithm constructs a cut of value $nk/4+ n\Par_\star\sqrt{k/4}+\err(n,k,L)$,
where $\Par_\star\approx 0.763166$ is the value of the Parisi formula from spin glass theory,
and $\err(n,k,L)=o_n(n)+no_k(\sqrt{k})+n \sqrt{k} o_L(1)$ (subscripts indicate the asymptotic variables).
Our result generalizes to locally treelike graphs, i.e., graphs whose girth becomes $2L$
after removing a small fraction of vertices. 

Earlier work established that, for random $k$-regular graphs, the 
typical max-cut value is $nk/4+ n\Par_\star\sqrt{k/4}+o_n(n)+no_k(\sqrt{k})$. Therefore our
algorithm is nearly optimal on such graphs.
An immediate corollary of this result is that random regular graphs have nearly minimum 
max-cut, and nearly maximum min-bisection among all regular locally treelike graphs.
This can be viewed as a combinatorial version of the near-Ramanujan property of random regular graphs.
\end{abstract}

\section{Introduction}
\label{sec:intro}
Given a $k$-regular graph $G = (V=[n],E)$ over $n$ vertices, the minimum bisection problem aims at finding a partition of the vertex set 
$V= V_+\cup V_-$ into two sets of equal size  as 
to minimize the number of edges across the partition. (To avoid notational burden, we will assume $n$ even throughout.) The partition
can be encoded into a vector $\bsigma = (\sigma_1,\dots,\sigma_n)\in\{-1,+1\}^n$,
which leads to the equivalent optimization problem
\begin{align}
\MBIS(G) = \max\Big\{ \frac{1}{n}\sum_{(i,j)\in  E}\sigma_i\sigma_j:\;\; \sigma_i\in\{-1,+1\}\,,\;\sum_{i=1}^n\sigma_i = 0\Big\}
\, .\label{eq:minbisecOPTdef}
\end{align}
The number of edges across the partition is then given by $(|E|-n \MBIS(G))/2=(nk/4)-n\MBIS(G)/2$.
The maximum cut is defined analogously, except that we want to maximize the
number of edges across the cut, and it is customary not to constrain the cardinality of $V_+$, $V_-$.  We therefore define
\begin{align}
\MCUT(G) = \max\Big\{ -\frac{1}{n}\sum_{(i,j)\in  E}\sigma_i\sigma_j:\;\;\sigma_i\in\{-1,+1\}\Big\}
\, .\label{eq:maxcutOPTdef}
\end{align}
As before, the number of edges cut by such a partition is $(nk/4)+n\MCUT(G)/2$.

For random $k$-regular graphs $G_{n,k}$, it is known based on a combinatorial version of the interpolation method that the maximum cut value $\MCUT(G_{n,k})$ converges in probability as $n \to \infty$ to a limiting non-random degree-dependent constant $c_{\smaxcut}(k)$~\cite{bayati2010combinatorial,gamarnik2014right,salez2016interpolation}. 
The analogous result for the minimum bisection value $\MBIS(G_{n,k})$ is expected but is not known to hold for any fixed value of $k$. See for instance~\cite{salez2016interpolation,huang2018convergence} for variants of this quantity which are known to converge. 
Calling $c_{\sminbisec}(k)$ the putative limiting value of $\MBIS(G_{n,k})$, it was conjectured in \cite{zdeborova2010conjecture} based on the cavity method that 
in fact $c_{\sminbisec}(k)=c_{\smaxcut}(k)$ for all $k\geq 3$. 
The first-order asymptotics for large $k$ of this conjecture were confirmed in \cite{dembo2017extremal}, where it was shown that 
\begin{align}
\MBIS(G_{n,k}) &= \Par_{\star} \sqrt{k} +  o_k(\sqrt{k}) + o_n(1) \, , \\
\MCUT(G_{n,k}) &=  \Par_{\star} \sqrt{k} +  o_k(\sqrt{k}) + o_n(1) \, .
\end{align}
Here the subscripts to the $``o"$ notation indicate the asymptotic variables, and 
\begin{equation}\label{eq:parisiformula}
    \Par_\star=\inf_{\gamma\in\cuU} \Par(\gamma) \, ,
\end{equation}
where the Parisi functional $\Par$ and the function space $\cuU$ are defined as follows.

\begin{definition}

Let $\cuU$ be the space of non-decreasing, right-continuous non-negative functions $\gamma:[0,1)\to \R_+$. The Parisi functional at zero temperature is the function $\Par:\cuU\to\R$ given by
\[
    \Par(\gamma)=\Phi_{\gamma}(0,0)-\frac{1}{2}\int_0^1 t\gamma(t) \rmd t \, ,
\]
where $\Phi_{\gamma}:[0,1)\times \R\to \R$ is the unique solution of the Parisi PDE
\begin{align}
    \label{eq:parisiPDE}\partial_t \Phi_{\gamma}(t,x)+\frac{1}{2}\Big( \gamma(t) (\partial_x \Phi_{\gamma}(t,x))^2 + \partial_{xx}\Phi_{\gamma}(t,x)\Big)&=0 \, ,\\
    \Phi_{\gamma}(1,x)&=|x| \, ,\quad x\in \R \, .
\end{align}
\end{definition}

The constant $\Par_\star$ naturally arises as the limiting ground state energy of 
the Sherrington-Kirkpatrick (SK) model from spin glass theory. In this model, one samples a GOE 
matrix $\bm{A}\in\R^{n\times n}$ ($\bA=\bA^{\sT}$,
 $(A_{ij})_{i<j}\sim_{iid}\normal(0,1/n)$ and  $(A_{ii})_i \sim_{iid}\normal(0,2/n)$, although we remark that the diagonal of $\bA$ does not matter) and then considers the optimization problem
\begin{equation}
    \OPT_{\sSK}(\bm{A}):=\frac{1}{2 n}\max\Big\{\big\langle \bsigma,\bm{A}\bsigma\big\rangle~:~\bsigma\in \{-1,+1\}^n\Big\}.
\end{equation}
The aforementioned result of \cite{dembo2017extremal} was established 
by bounding the difference  $|\E\MCUT(G)/\sqrt{k}- \E \OPT_{\sSK}(\bm{A})|$ (and similarly for $\MBIS(G)$)
via the Lindeberg exchange method.
Furthermore, the celebrated Parisi formula implies that in this dense model, 
the normalized ground state energy converges to $\Par_\star$: 

\begin{theorem}[\cite{talagrand2006parisi,auffinger2017parisi}]
For $\bm{A}\sim \GOE(n)$,
\begin{equation}
    \plim_{n\to\infty}\,  \OPT_{\sSK}(\bm{A})=\Par_\star \, .
\end{equation}
\end{theorem}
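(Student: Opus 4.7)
The plan is to prove matching upper and lower bounds on $\E \OPT_{\sSK}(\bA)/n$ and combine them with concentration. Concentration is immediate: $\bA \mapsto n^{-1}\OPT_{\sSK}(\bA)$ is the maximum over $\bsigma\in\{-1,+1\}^n$ of the linear functionals $\bA \mapsto (2n)^{-1}\langle \bsigma, \bA\bsigma\rangle$, hence $O(n^{-1/2})$-Lipschitz in the Frobenius norm. By Gaussian concentration on $\GOE(n)$ we get $n^{-1}\OPT_{\sSK}(\bA) - n^{-1}\E\OPT_{\sSK}(\bA) \to 0$ in probability, so the problem reduces to showing $\lim_n n^{-1}\E\OPT_{\sSK}(\bA) = \Par_\star$.

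For the upper bound I would run Guerra's interpolation. Fix a piecewise-constant $\gamma\in\cuU$ and, at inverse temperature $\beta<\infty$, construct an interpolating Gaussian Hamiltonian $H_t(\bsigma)$ on $\{-1,+1\}^n$ which at $t=1$ is the SK Hamiltonian $\tfrac{1}{2}\langle\bsigma,\bA\bsigma\rangle$ and at $t=0$ is an independent Gaussian cascade Hamiltonian whose overlap structure is encoded by $\gamma$. A Gaussian integration-by-parts computation shows that $\tfrac{\rmd}{\rmd t}\E\log Z_t\leq 0$, giving
\[
\frac{1}{\beta n}\,\E\log \sum_{\bsigma\in\{-1,+1\}^n} \exp\!\Big(\tfrac{\beta}{2}\langle\bsigma,\bA\bsigma\rangle\Big) \;\leq\; \Par_\beta(\gamma) \, ,
\]
where $\Par_\beta$ denotes the finite-temperature Parisi functional. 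Letting $\beta \to \infty$, the left-hand side converges to $n^{-1}\E\OPT_{\sSK}(\bA)$ up to an $O((\log 2)/\beta)$ error, and $\Par_\beta(\gamma)$ converges to the zero-temperature functional $\Par(\gamma)$ of the excerpt (this is precisely the vanishing-temperature limit justified by Auffinger-Chen). Taking the infimum over $\gamma \in \cuU$ yields $\limsup_n n^{-1}\E\OPT_{\sSK}(\bA) \leq \Par_\star$.

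For the matching lower bound I would follow the Aizenman-Sims-Starr cavity scheme combined with Panchenko's ultrametricity theorem. Writing the finite-$n$ free energy as a telescoping sum and adding a cavity spin, the limiting free energy is expressed as an infimum over random overlap structures of an Aizenman-Sims-Starr functional. The key input is that after a small independent Gaussian perturbation of the Hamiltonian, every limiting Gibbs measure satisfies the Ghirlanda-Guerra identities; Panchenko's theorem then forces the limiting overlap distribution to be ultrametric and hence representable by a Ruelle probability cascade parameterized by some $\gamma \in \cuU$. Evaluating the Aizenman-Sims-Starr functional on such cascades matches the Parisi functional $\Par_\beta$, and sending $\beta \to \infty$ gives $\liminf_n n^{-1}\E\OPT_{\sSK}(\bA) \geq \Par_\star$.

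The hard part will be the lower bound. The upper bound is essentially a convexity argument once the interpolating Hamiltonian is correctly set up, but the lower bound rests on the entire ultrametricity program: establishing the Ghirlanda-Guerra identities in the $n\to\infty$ limit, deducing ultrametricity via Panchenko's intricate purely probabilistic argument, identifying ultrametric random overlap structures with Ruelle cascades, and finally exchanging the $\beta \to \infty$ and $n \to \infty$ limits to reach the zero-temperature statement stated in the excerpt. Each of these steps is nontrivial and is where the real content of the Parisi formula lives.
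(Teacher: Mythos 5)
This theorem is quoted in the paper from Talagrand and Auffinger--Chen and is not proved there, and your outline---Gaussian concentration, Guerra's interpolation for the upper bound, the Aizenman--Sims--Starr/Ghirlanda--Guerra/ultrametricity program for the lower bound, and the zero-temperature limit---is exactly the route taken in those cited works. The only imprecision is minor: the zero-temperature functional $\Par(\gamma)$ on $\cuU$ arises from the finite-temperature Parisi functional by rescaling the order parameter (roughly $\gamma\approx\beta x_\beta$, so the relevant $\gamma$ varies with $\beta$ and need not be bounded by $1$) rather than by fixing $\gamma$ and sending $\beta\to\infty$, and this rescaled limit is precisely what Auffinger--Chen justify.
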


The purpose of the present paper is to study efficient algorithms to \emph{compute} a point 
$\bsigma\in \{-1,+1\}^n$ approximately achieving the minimum bisection or maximum cut 
value for regular graphs. 
Our approach builds upon recent algorithmic advances for the SK model. 
Namely, the second author showed in \cite{montanari2021optimization} that an approximate 
optimizer can be constructed in nearly linear time under a certain \emph{no overlap gap} assumption. 
This was generalized in \cite{ams20}, which established the same result under a somewhat more
compact assumption. 
To state this assumption, we first note that~\cite{auffinger2017parisi} shows that the infimum in~\eqref{eq:parisiformula} is achieved by some $\gamma_*\in \cuU$. 
\begin{assumption}\label{ass:frsb}
The function $\gamma_*:[0,1)\to\R_+$ is strictly increasing. 
\end{assumption}
\begin{theorem}[\cite{montanari2021optimization,ams20}]\label{thm:dense}
Under Assumption~\ref{ass:frsb}, for every $\eps >0$, there is an efficient approximate message passing algorithm outputting $\bsig\in \{-1,+1\}^n$ such that 
\[
    \plim_{n\to\infty} \frac{\langle \bsigma, \bm{A} \bsigma\rangle}{2n} \ge \Par_\star -\eps.
\]
\end{theorem}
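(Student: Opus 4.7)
The plan is to construct $\bsig$ via an incremental AMP scheme that simulates an optimal stochastic-control representation of the Parisi functional $\Par(\gamma_*)$ coordinate-wise. Fix a large integer $K$ and a mesh $0 = t_0 < t_1 < \cdots < t_K = 1$. Starting from $\bz_0 = \b0$, I would produce iterates $\bz_1, \dots, \bz_K \in \R^n$ by Onsager-corrected AMP increments of the form
\[
\bz_{k+1} = \bz_k + \sqrt{t_{k+1}-t_k}\,\bm{w}_k,
\qquad
\bm{w}_k = \bA f_k(\bz_k) - b_k f_{k-1}(\bz_{k-1}),
\]
where the nonlinearity $f_k(x) = \partial_x \Phi_{\gamma_*}(t_k, x)$ is read off from the Parisi PDE~\eqref{eq:parisiPDE} and $b_k$ is the usual Onsager coefficient involving $f_k'$. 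The output is $\bsig = \sign(\bz_K)$. Morally, $(\bz_k)$ is a coordinate-wise Euler simulation of the Parisi SDE $\rmd X_t = \gamma_*(t)\,\partial_x \Phi_{\gamma_*}(t,X_t)\,\rmd t + \rmd B_t$ whose endpoint and cumulative cost realize the infimum in~\eqref{eq:parisiformula}.

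Next I would analyze this via AMP state evolution. The classical results of Bolthausen and Bayati--Montanari imply that, for fixed $K$ and as $n\to\infty$, the empirical law of coordinate-wise histories $(z_{0,i}, z_{1,i}, \dots, z_{K,i})_{i\in[n]}$ converges to the law of the Euler discretization of the Parisi SDE on the mesh $\{t_k\}$. Expanding and rearranging the recursion then yields
\[
\plim_{n\to\infty}\frac{\langle \bsig, \bA\bsig\rangle}{2n} \;\geq\; V_K(\gamma_*),
\]
where $V_K(\gamma_*)$ is a Riemann-sum approximation to the value of the stochastic control problem whose Hamilton--Jacobi--Bellman equation is~\eqref{eq:parisiPDE}; that value equals $\Par(\gamma_*) = \Par_\star$. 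Refining the mesh sends $V_K(\gamma_*) \to \Par_\star$, so picking $K$ large enough gives the desired bound $\Par_\star - \eps$.

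The main obstacle, and the place where Assumption~\ref{ass:frsb} is essential, is establishing the regularity of $\Phi_{\gamma_*}$ required to run the above program. Strict monotonicity of $\gamma_*$ ensures that $\partial_x\Phi_{\gamma_*}(t,\cdot)$ is Lipschitz uniformly on compact $t$-intervals, which is needed both for AMP state evolution to apply with these specific nonlinearities and for the Euler scheme to converge to the SDE as $K\to\infty$. Conceptually, full replica symmetry breaking (strict monotonicity of $\gamma_*$) is exactly the condition that no overlap gap appears at any scale, so the incremental scheme can always make a nontrivial step of the prescribed size. Without it, $\gamma_*$ has flat regions where the optimal feedback control degenerates; the AMP iterates would collapse across those intervals and the construction could no longer hit the Parisi value. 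I expect the technically delicate parts to be the uniform state-evolution control over the $K$ iterations together with the stability of $V_K \to \Par_\star$ under only Assumption~\ref{ass:frsb}.
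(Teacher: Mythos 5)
This theorem is not proved in the paper at all: it is quoted from \cite{montanari2021optimization,ams20}, and your sketch is indeed the same IAMP strategy used there (and adapted to trees in Section 4 here): simulate the Parisi SDE \eqref{eq:parisiSDE1} coordinate-wise with AMP-generated noise, apply state evolution, and identify the limiting energy with $\Par_\star$. However, as written your construction has genuine gaps. First, your recursion $\bz_{k+1}=\bz_k+\sqrt{t_{k+1}-t_k}\,\bm{w}_k$ with $\bm{w}_k=\bA f_k(\bz_k)-b_kf_{k-1}(\bz_{k-1})$ contains no drift term: the Onsager-corrected AMP output is asymptotically a \emph{centered} Gaussian, so your iterates simulate a driftless process, not \eqref{eq:parisiSDE1}. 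In the actual construction the drift $\gamma_*(t)\,\partial_x\Phi_{\gamma_*}(t,x)\,\delta$ is added explicitly coordinate-wise (compare Eqs.~\eqref{eq:x_itoj}--\eqref{eq:x_i} with $b=\gamma\partial_x\Phi_\gamma$), and the matrix is applied not to $\partial_x\Phi(t_k,\bz_k)$ but to a nonlinearity built from $\partial_{xx}\Phi_{\gamma_*}$ acting on the latest noise, normalized to unit second moment --- i.e.\ to the increments of the martingale $\partial_x\Phi_{\gamma_*}(t,X_t)=\int_0^t\partial_{xx}\Phi_{\gamma_*}(s,X_s)\,\rmd B_s$. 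That choice is what makes the successive effective noises asymptotically orthogonal, so that they can serve as Brownian increments; with your choice $f_k=\partial_x\Phi_{\gamma_*}(t_k,\cdot)$ the step-to-step covariances $\E\{f_k(X_{t_k})f_j(X_{t_j})\}$ do not vanish (the $f_k(X_{t_k})$ form a martingale, not orthogonal increments), a single-term Onsager correction is not the right memory term for a nonlinearity depending on the whole history, and the empirical histories would not converge to the Euler scheme of \eqref{eq:parisiSDE1} as you assert.

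Second, the identification of the limiting value is asserted rather than derived, and the role of Assumption~\ref{ass:frsb} is misplaced. The energy achieved by the algorithm is not ``the value of the control problem whose HJB equation is \eqref{eq:parisiPDE}'' (that value is $\Phi_{\gamma}(0,0)$, which differs from $\Par(\gamma)$ by the correction term); it is $\int_0^1\E\{\partial_{xx}\Phi_{\gamma_*}(t,X_t)\}\,\rmd t$, obtained by a telescoping/orthogonality computation for the martingale vector, and the whole point is the pair of identities \eqref{eq:id2}--\eqref{eq:id4} (Lemma~\ref{lem:identity}, taken from \cite{ams20}): the unit quadratic-variation rate forces the output onto the cube up to a small rounding error, and the integral equals $\Par_\star$ \emph{only} at the minimizer $\gamma_*$ and under Assumption~\ref{ass:frsb}, via the first-order optimality conditions. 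This is where strict monotonicity of $\gamma_*$ actually enters --- not through Lipschitz regularity of $\partial_x\Phi_{\gamma_*}$, which holds for every $\gamma\in\cuU$ by Proposition~\ref{prop:phireg}. Relatedly, the guaranteed output is (a rounding of) the near-$\pm1$ martingale vector with coordinates $\approx\partial_x\Phi_{\gamma_*}(t_K,X_{t_K})$, not $\sign(\bz_K)$; since $\Phi_{\gamma_*}(t,\cdot)$ is even and convex the two have the same signs, but transferring the energy bound to a sign vector requires showing $\E\{(1-|\partial_x\Phi_{\gamma_*}(t_K,X_{t_K})|)^2\}\to 0$ as $t_K\to 1$ (again a consequence of \eqref{eq:id2}), a step your sketch does not supply.
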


Our main result is a message passing algorithm on locally treelike regular 
graphs of large degree, also under Assumption~\ref{ass:frsb}. 
(Note that the assumption is about the minimizer of the Parisi formula for the SK model.)
The algorithm is local in the sense defined below. Consequently, our results apply to any
 sequence of $k$-regular graphs $G_{n,k}$ which converges locally-weakly 
 to the $k$-regular tree. A special example is provided of course by $k$-regular random graphs.

Below, we let $(\Omega,\cF,\nu)$ be a probability space and 
$\cG_*(k,\ell;\Omega)$ be the set of rooted $k$-regular graphs of radius (maximum distance from the root)
$\ell$, with vertices carrying labels in $\Omega$. We will consider random variables $(\omega_i)_{i\in V(G)}$ labelling the vertices, with $\omega_i\in\Omega$, $\omega_i\sim\nu$. We denote by $\Ball_i(\ell)$ the ball 
of radius $\ell$ around $i$ in the graph distance on $G$ rooted at $i$, and by $(\Ball_i(\ell),\bomega_{\Ball_i(\ell)})$ the corresponding labeled graph. Here $\bomega_{\Ball_i(\ell)}=(\omega_j)_{j\in \Ball_i(\ell)}$.

\begin{definition}\label{def:localalg}
A randomized algorithm defined on a $k$-regular graph $G$ is $\ell$-\emph{local} if 
there exists a measurable function $F: \cG_*(k,\ell;\Omega)\to \{-1,+1\}$, such that the 
algorithm has the following structure.
First, generate i.i.d.\ random variables $(\omega_i)_{i\in V(G)}$ as above, i.e. with 
$\omega_i\in\Omega$ and $\omega_i\sim\nu$ for some law $\nu$.
Then let $\sigma_i = F(\Ball_i(\ell),\bomega_{\Ball_i(\ell)})$.
Furthermore, we say that such an algorithm is \emph{balanced} if $ \P(\sigma_i=1)=\frac{1}{2}$.
\end{definition}
  
  \begin{definition}\label{def:blockfactor}
A stochastic process $(\xi_v)_{v\in V(T)}\in \cX^{V(T)}$ indexed by the vertices of the infinite $k$-regular tree $T$ and
taking values in a measurable space $\cX$, is a block factor of IID if there exists a  finite $\ell \ge 1$ and a measurable function $F: \cG_*(k,\ell;\Omega)\to \{-1,+1\}$ such that  $\xi_v = F(\Ball_v(\ell),\bomega_{\Ball_v(\ell)})$ for all $v \in V(T)$.
  \end{definition}

While we stated the above definitions in general form for clarity, by the Borel
equivalence theorem, there
is essentially no loss of generality in assuming $(\Omega,\cF,\nu)=(\R,\cB_{\R},\normal(0,1))$,
i.e., taking the $\omega_i$ to be standard Gaussian random variables, which we will do in the remainder of this paper.

\begin{definition}\label{def:locallytreelike}
A finite graph $G$ is $(\ell,\eps)$-locally treelike if $\Ball_i(\ell+1)$ is isomorphic to a tree for at least $(1-\eps)n$ vertices $i\in V(G)$.
\end{definition}

For convenience, we introduce the notation   
\begin{equation}\label{eq:UG}
U_{G} (\bsigma) = \frac{1}{n} \sum_{(i,j)\in E(G)} \sigma_i \sigma_j \, .
\end{equation}

\begin{theorem}
\label{thm:main}
If Assumption~\ref{ass:frsb} holds, then for any $\eta>0$ there exists $\ell_0 \ge 1$ such that for any $\ell \ge \ell_0$ there exists $n_0 = n_0(\ell) \ge 1, k_0 = k_0(\eta,\ell) \ge 2$ such that the following holds for all $k \ge k_0$ and $n \ge n_0$. There exist balanced $\ell$-local algorithms $\cA_{\sminbisec}$ and $\cA_{\smaxcut}$ outputting $\bsig^1$ and $\bsig^2$ respectively such that on any $(\ell,\eta/\sqrt{k})$-locally treelike $k$-regular graph $G$ with $n$ vertices, 
\begin{align}
   \frac{1}{\sqrt{k-1}}U_G(\bsigma^1) \geq \Par_\star - \eta \, ,~~~~\mbox{and}~~~~
   -\frac{1}{\sqrt{k-1}}U_G(\bsigma^2) \geq \Par_\star-\eta\, ,\label{eq:MainThm}
\end{align}
where each event holds with probability at least $1 - Ce^{-n\eta^2/(Ck)}$ where $C = C(\ell,k)>0$.
\end{theorem}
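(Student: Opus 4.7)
I would port the incremental AMP construction of \cite{ams20} for the SK model to the graph $G$ by replacing the GOE matrix $\bA$ with the normalized adjacency operator $A(G)/\sqrt{k-1}$, and verify that after a bounded horizon $T=T(\eta)$ its state evolution converges, in the double limit $n\to\infty$ then $k\to\infty$, to the SK state evolution. The bounded horizon makes the algorithm local, and the large-$k$ universality transfers the SK guarantee to the graph.

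\textbf{Local algorithm.} Using Assumption \ref{ass:frsb}, fix a strictly increasing piecewise-constant $\gamma\in\cuU$ with $\Par(\gamma)\le\Par_\star+\eta/4$. The construction of \cite{montanari2021optimization,ams20} associates to $\gamma$ a bounded number $T=T(\eta)$ of AMP steps $\bx^0,\bx^1,\dots,\bx^T$, each consisting of a matrix-vector product, an Onsager correction, and a coordinatewise nonlinearity, followed by a sign-rounding step that outputs $\bsigma\in\{-1,+1\}^n$ achieving $\langle\bsigma,\bA\bsigma\rangle/(2n)\ge \Par_\star-\eta/2$ w.h.p.\ for $\bA\sim\GOE(n)$. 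I would run the non-backtracking version of this iteration on $G$, driven by $A(G)/\sqrt{k-1}$ and initialized with Gaussian labels $\omega_v\sim\normal(0,1)$. Each iterate $x^{t+1}_i$ depends only on $\{x^t_j:j\in\di\}$ and $x^{t-1}_i$, so after $T$ steps the output $\sigma_i$ is a measurable function of $(\Ball_i(T),\bomega_{\Ball_i(T)})$, yielding a $T$-local algorithm in the sense of Definition \ref{def:localalg}. Setting $\ell_0=T(\eta)$, any choice $\ell\ge\ell_0$ works since a $T$-local algorithm is a fortiori $\ell$-local; balance $\P(\sigma_i=+1)=1/2$ is enforced by multiplying the whole output by an independent common $\pm 1$ random sign included in the labels, which leaves $U_G(\bsigma)$ invariant.

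\textbf{Main step: state evolution at large degree.} The crux of the proof — and, I expect, the main obstacle — is to show that the joint empirical distribution of the AMP trajectory $(x^1_i,\dots,x^T_i)_{i\in V(G)}$ converges in the double limit $n\to\infty$ then $k\to\infty$ to the SK state evolution driving the guarantee of \cite{ams20}. For a vertex $i$ whose radius-$T$ ball is a tree, the iterate $x^{t+1}_i$ is a $(k-1)^{-1/2}$-weighted sum of $k$ nearly-independent contributions from neighbors, and a CLT as $k\to\infty$ produces a Gaussian increment with the variance predicted by the SK recursion, while the non-backtracking Onsager correction cancels the bias from previous iterations exactly as in the SK case. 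I would proceed by induction on $t\le T$, establishing (a) conditional on iterates at times $\le t$, $\bx^{t+1}$ is asymptotically Gaussian with SK covariance (one-step CLT over the $k$ tree-neighbors), (b) concentration of the empirical covariance of the iterates across $V(G)$ using the locally treelike hypothesis, and (c) absorption of the $\eta/\sqrt{k}$-fraction of non-tree vertices into an $O(\eta)$ error after rescaling by $\sqrt{k-1}$. Step (a) strengthens the Lindeberg-type universality used in \cite{dembo2017extremal} to compare $\MCUT$ and $\OPT_{\sSK}$ from a single scalar optimum to the full AMP trajectory, which is the technically demanding part.

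\textbf{Value, concentration, and max-cut.} Once state evolution is established, $U_G(\bsigma)/\sqrt{k-1}$ tracks $\langle\bsigma,\bA\bsigma\rangle/(2n)$ on the SK model, which exceeds $\Par_\star-\eta$ w.h.p.\ by \cite{ams20}; this gives $\cA_{\sminbisec}$. The algorithm $\cA_{\smaxcut}$ is obtained identically but driven by $-A(G)/\sqrt{k-1}$, using the sign symmetry of the Parisi formula for the SK model (the GOE law is invariant under $\bA\mapsto -\bA$). For the tail bound, the output $\bsigma$ is a deterministic local function of the IID labels $\bomega$ with dependency radius $\ell$; writing $U_G(\bsigma)=(1/n)\sum_{(i,j)\in E(G)}\sigma_i\sigma_j$ as a sum of $nk/2$ bounded terms whose dependency graph has maximum degree $O(k^{2\ell})$, a McDiarmid-type inequality for locally dependent random variables gives the claimed subgaussian tail with rate $\eta^2/(C(\ell)k)$, the factor $1/k$ arising from the normalization by $\sqrt{k-1}$ of a sum with fluctuations of order $\sqrt{nk}$.
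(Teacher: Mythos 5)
Your overall strategy (emulate the incremental AMP of \cite{ams20} by a bounded-horizon, non-backtracking iteration on $G$, Gaussianize the messages as $k\to\infty$, transfer to the finite graph by locality) is the same as the paper's, but the proposal skips the step that is actually the technical heart of the argument. On a sparse graph the objective is $U_G(\bsigma)=\frac1n\sum_{(i,j)\in E}\sigma_i\sigma_j$, so what must be computed is the \emph{joint} law of the algorithm's output at the two endpoints of an edge; per-vertex state evolution (your steps (a)--(c)) does not determine this, and the dense-case trick of reading the energy $\langle \bz,\bA\bz\rangle$ off the AMP recursion has no automatic analogue here. The paper spends all of Section~\ref{sec:AnalysisTree} on exactly this point: Proposition~\ref{propo:EnergyTree}, proved via the orthogonality/measurability Lemmas~\ref{lemma:Umoment}--\ref{lemma:Diagonal} on the tree, expresses $k\E\{z^L_v z^L_{\ov}\}$ in terms of quantities that the CLT (Theorem~\ref{thm:clt}) and the continuum limit then identify with $\int\E\{\partial_{xx}\Phi_{\gamma}(t,X_t)\}\,\rmd t=\Par_\star$. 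Moreover, after rounding, one must control cross terms such as $\E\{(z^L_v-\hat z^L_v)\hat z^L_{\ov}\}$ over an edge; the paper does this with a Ramanujan-type spectral bound for factor-of-IID processes on the tree (Lemma~\ref{lem:TreeEigenvalue}, used in Lemma~\ref{lem:diff}), an ingredient your proposal never invokes. Asserting that ``$U_G(\bsigma)/\sqrt{k-1}$ tracks $\langle\bsigma,\bA\bsigma\rangle/(2n)$'' once state evolution holds therefore leaves the central claim unproved; also, in the edge-message (non-backtracking) formulation there is no Onsager term to cancel anything, so the mechanism you describe for matching the SK recursion is not pinned down.

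Two smaller points. First, your balancing device --- multiplying the whole output by a single common random sign --- is not available to an $\ell$-local algorithm as defined in Definition~\ref{def:localalg}: the only randomness is i.i.d.\ per-vertex labels visible within each radius-$\ell$ ball, and a global shared sign is not a function of these. The paper instead gets $\P(\sigma_i=1)=\frac12$ from the symmetry of the construction ($\partial_x\Phi_\gamma$ odd, $\partial_{xx}\Phi_\gamma$ even, so $z^L_i$ is symmetrically distributed) combined with the randomized rounding $\E\{\hat\sigma_i\mid\bu^0\}=\hat z_i^L$; your construction could likely be repaired the same way, but as written the argument is invalid. Second, a dependency-graph McDiarmid bound with maximum degree $O(k^{2\ell})$ yields a tail of order $\exp(-cn\eta^2/k^{\Theta(\ell)})$, which is of the right qualitative type (exponential in $n$ for fixed $k,\ell$, as in Lemma~\ref{lem:concent}) but does not by itself deliver the $k$-dependence stated in the theorem; this is a minor issue compared with the missing edge-correlation computation above.
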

Less formally, in the above result, one should think of $n \to \infty$ first, then $k \to \infty$, then finally $L \to \infty$.
  
Local algorithms for extremal cuts on locally treelike graphs were studied by Lyons
in~\cite{lyons2017factors}. The term \emph{block factor of IID} is introduced in that paper. Lyons
\cite{lyons2017factors} gives a local algorithm achieving  
$U_G(\bsigma^1)/\sqrt{k-1} \ge 2/\pi+o_k(1)$. In fact
his approach can be viewed as a special case of our general strategy: we refer to Section
\ref{sec:special} for further comparison. 
Recent attempts to compute extremal cuts on sparse graphs using quantum algorithms appear in~\cite{barak2021classical,basso2021quantum}. These algorithms are shown to surpass the value $2/\pi$ achieved by the local algorithm of Lyons, but require a quantum computer, or otherwise run in exponential time on a classical one.     
Besides the algorithmic question, there are a number of papers providing upper and lower bounds for the max-cut value and related quantities on sparse random graphs. We refer to the two recent papers~\cite{gamarnik2018max,coja2020ising} and references therein regarding this topic. 

Our algorithm uses a message passing strategy developing the ideas of
\cite{montanari2021optimization,ams20}. However, these papers dealt with dense 
graph structures, and used an approximate message passing (AMP) update 
\cite{bayati2011dynamics} as the underlying iterative scheme. In dealing with sparse
locally treelike graphs, we will consider a message passing algorithm of more classical
type that operates by updating messages associated to the directed edges of $G$
(two directed edges are associated to each edge in $G$). 
The algorithm constructs a martingale on each vertex, coupling martingales on 
different vertices in a non-trivial manner by propagating messages through directed edges. 

\begin{remark}\label{rmk:balanced}
Since our algorithms are local and balanced, an elementary 
concentration bound implies $\sum_{i=1}^n \sigma_i = O(n^{1/2+\eps})$ for any constant $\eps>0$,
with probability converging to one as $n\to\infty$.
Hence, for instance, we can flip $O(n^{0.51})$  variables to construct
an \emph{exact} bisection $\bsig\in \{-1,+1\}^n$ (i.e., such that $\sum_{i=1}^n \sigma_i = 0$).
The resulting cut obeys the guarantees of Theorem~\ref{thm:main}, namely Eq.~\eqref{eq:MainThm}. 
\end{remark}

\paragraph{Organization of the paper.} Section \ref{eq:GeneralMP} describes a general 
message passing scheme, and characterizes the achieved value. In Section \ref{sec:CLT}
we prove that, for large degrees, the messages of this algorithm obey a central limit theorem, 
which simplifies the analysis.  Finally, in Section \ref{sec:ContTime} we take the continuum time 
limit of the previous algorithm and prove Theorem \ref{thm:main}.

\section{A general message passing algorithm}
\label{eq:GeneralMP}

\subsection{Algorithm definition}
\label{sec:AlgoDef-Discrete}

The algorithm depends on two sequences of measurable functions: $F_{\ell}:\R^{\ell+1}\to\R$, 
$H_{\ell}:\R^{\ell+1}\to\R$, for $\ell\ge 0$.

We initialize the algorithm using $(u^0_i)_{i \in V}$ a collection of i.i.d.\ random variables with $u_i^0\sim \normal(0,1)$. The algorithm updates
messages indexed by directed edges $\vE =\{(i\to j): \; (i,j)\in E\}$ (in particular $|\vE|= 2|E|$). We denote by $\partial i$ the set of neighbors of $i$. We will denote the basic messages by 
$(u^{\ell}_{i\to j})_{(i\to j)\in \vE}$, with $\ell\in \{0,1,2,\dots\}$ denoting the iteration number. Messages are initialized by setting $u^0_{i\to j} = u^0_i$,
and updated for $\ell\ge 0$ via
\begin{align}
\begin{split}\label{eq:Iteration}
u^{\ell+1}_{i\to j} &= \frac{1}{\sqrt{k-1}}\sum_{v\in \di \setminus j} A^{\ell-1}_{v\to i} \cdot u^{\ell}_{v\to i}\, ,\\
A^{\ell}_{i\to j} & = F_{\ell}(u^0_{i\to j},\dots,u^{\ell}_{i\to j})\, ,
\end{split}
\end{align}
where, by convention, we set $A^{-1}_{i\to j}=1$.
We further define node quantities 
\begin{align}
u^{\ell+1}_{i} &= \frac{1}{\sqrt{k}}\sum_{v\in \di } A^{\ell-1}_{v\to i} \cdot u^{\ell}_{v\to i}\, . \label{eq:VertexIteration}
\end{align}

Finally, fixing an integer $L \ge 1$ and a `time step' $\delta \in [0,1]$, we construct the outputs of the algorithm via
\begin{align}
z_v^{L} &= \sqrt{\delta} \, \sum_{\ell=1}^LB_{v}^{\ell-1} \cdot u_v^{\ell}\, ,\label{eq:zdef}\\
B_{v}^{\ell} & = H_{\ell}(u^0_{v},\dots,u^{\ell}_v)\, .
\end{align}

Intuitively, we think of $z_v^L$ as a martingale driven by the sequence $(u_v^\ell)_{1 \le \ell \le L}$ when $\delta$ is small and $\delta L$ is constant. The general strategy will be to maximize or minimize the correlation $\E\{z_v^L z_{\ov}^L\}$ over an edge $(v,\ov) \in E$ in the case of minimum bisection or maximum cut respectively, subject to the constraint that $|z_{v}^L| \to 1$ a.s.\ when $\delta \to 0$ and $L \to \infty$ inversely proportionally. Without the last constraint, the process solving this extremal problem is the so-called Gaussian wave function with eigenvalues $\pm 2\sqrt{k-1}$, respectively~\cite{csoka2015invariant}. We consider this case for illustration in the next section.           
Putting back the binary constraint, for $k$ large, we expect this optimization problem to be closely related to its analogue on the complete graph, which was solved in~\cite{montanari2021optimization,ams20} and is given by a stochastic representation of the Parisi formula. Assumption~\ref{ass:frsb} guarantees that this stochastic representation indeed achieves the optimal value $\Par_\star$.       
We will prove a central limit theorem for the sequence $(u_v^\ell)_{1 \le \ell \le L}$ when $k$ is large, and with an appropriate choice of the functions $\{F_\ell, H_\ell\}$, approximate the process $z_v^{\lfloor t/\delta\rfloor}$ by a martingale $M_t = \partial_x \Phi_{\gamma_*}(t,X_t)$ where $\Phi_{\gamma_*}$ is the solution to the Parisi PDE Eq.~\eqref{eq:parisiPDE} with the optimal order parameter $\gamma_*$, Eq.~\eqref{eq:parisiformula}, and $X_t$ is the solution to the associated SDE, see Section~\ref{sec:continuum}. Since $|M_t| \to 1$ a.s.\ as $t\to 1$, this construction approximately solves the extremization problem, and leads to the result of Theorem~\ref{thm:main}. It remains an interesting open problem to solve this extremization problem for fixed $k$, either over processes of the form given in Eq.~\eqref{eq:zdef}, or over (block) factors of IID more generally.        

\subsection{A special case}
\label{sec:special}

Before proceeding with the analysis of the above algorithm, we remark here that with a special choice of the two sequences of functions $\{F_\ell, H_\ell\}$, the above iteration computes a finite approximation of the Gaussian wave function of~\cite{csoka2015invariant} on the infinite tree for any eigenvalue $|\rho| \le 2\sqrt{k-1}$. As a consequence one can recover the result of~\cite[Theorem 4.1]{lyons2017factors} for the bisection and cut values realized by a local algorithm on a $k$-regular random graph. 

For simplicity, assume $G$ is $k$-regular of girth at least $2L+1$ 
(i.e. it is $(L,0)$-locally treelike). Let $A^{\ell}_{i\to j} = 1$ for all $\ell \ge 0$ and $(i,j)\in \vE(G)$. 
We let $B^{\ell}_{i} = \beta_\ell$ for all $i \in V(G)$ and $1 \le \ell \le L$, 
for some deterministic sequence of numbers $(\beta_\ell)_{\ell=1}^{L}$ (not depending on $u_i^0,\cdots,u_i^L$ 
but possibly depending on $L$). We also let $\delta=1$. Then we can verify by induction 
that for every edge $(i,j) \in \vE(G)$ the vector of messages $(u_{i \to j}^{\ell})_{0\le \ell\le L}$ is distributed as 
$\normal(0,\id_{L+1})$. Similarly, for each vertex $i \in V(G)$, $(u_{i}^{\ell})_{0\le \ell \le L} \sim \normal(0,\id_{L+1})$. 
Furthermore, we can write $u_i^\ell$ and $z_i^\ell$ as follows:
 \begin{equation}   \label{eq:z_simple}
 u_i^\ell = \alpha_\ell \sum_{v \in V(G): d(i,v)=\ell} u_v^0 \, , ~~~\mbox{and}~~~
 z_i^\ell = \sum_{s=1}^\ell \beta_s u_i^s \, ,
\end{equation}
with 
 \begin{equation} 
\alpha_0 = 1\, ,~~~~~~\alpha_{\ell} = \frac{1}{\sqrt{k}}\frac{1}{(k-1)^{\frac{\ell-1}{2}}}~~~~~\mbox{for all}~~~~~ 1 \le \ell \le L \,.
 \end{equation} 
Therefore $(z_i^\ell)_{i \in V(G)}$ is a centered Gaussian sequence with $\E\{(z_i^\ell)^2\}=\sum_{s=1}^\ell \beta_s^2$ for all $i \in V(G)$ and $\ell \le L$. A simple computation reveal that the covariance between neighboring vertices $(v,\ov) \in E(G)$ is
 \begin{equation} \label{eq:corr_edge}
 \E\big\{z_v^L z_{\ov}^L\big\}  = \frac{2\sqrt{k-1}}{k} \sum_{\ell=2}^L \beta_{\ell-1} \beta_{\ell}  = \frac{\sqrt{k-1}}{k} \big\langle {\bm \beta} , {\bm T} {\bm \beta} \big\rangle \, , 
\end{equation} 
where ${\bm \beta} = (\beta_1,\cdots,\beta_{L})$ and ${\bm T}$ is a $L \times L$ matrix with entries 
$T_{\ell,\ell+1} = 1$ for $ 1\le \ell \le L-1$, $T_{\ell,\ell-1}=1$ for $2\le \ell \le L$, and zero entries otherwise. 
(This formula will be generalized  to arbitrary choices of $F_\ell$ and $H_\ell$ in Proposition~\ref{propo:EnergyTree} further below.)  

Observe that the matrix ${\bm T}$ has eigenvalues $2\cos(\frac{\ell \pi}{L+1})$ for 
$1\le \ell \le L$. By letting $\bbeta$ be a unit-norm eigenvector of $\bT$ with eigenvalue $\bar{\rho}$, 
we can construct a Gaussian process $\bz_{\bar{\rho}}\in\R^{V(G)}$
with
\begin{align}
\frac{1}{n}\E\big\{\|\bz_{\bar{\rho}}\|^2\big\}=1\, ,~~~~ \frac{1}{n}\E\big\{\big\<\bz_{\bar{\rho}},\bA_G\bz_{\bar{\rho}}\big\>\big\}=\bar{\rho}\sqrt{k-1},~~~~
\forall \, \bar{\rho} \in \cE_L := \Big\{2\cos\big(\frac{\ell \pi}{L+1}\big) \, : \, 1 \le \ell \le L \Big\}\, .
\end{align}
Here $\bA_G$ is the adjacency matrix of $G$.
Since the sets $\cE_L$  form increasingly finer discretizations of the interval 
$[-2,2]$ as $L$ gets large, 
we can approximate Gaussian processes with any correlation $\rho\in [-2\sqrt{k-1},2\sqrt{k-1}]$.
Also, in that limit, the process converges to an eigenfunction of the adjacency 
operator of the infinite tree, known as Gaussian wave function \cite{csoka2015invariant}.

As a special case, we obtain a local algorithm for optimizing the objective function 
$U_G$, Eq.~\eqref{eq:UG}.
Focusing on minimum bisection, let ${\bm \beta}$ be a unit-norm eigenvector of ${\bm T}$ with the largest eigenvalue $2\cos(\frac{\pi}{L+1})$. 
From Eq.~\eqref{eq:corr_edge} we have 
\begin{align*}
\E\big\{z_v^L z_{\ov}^L\big\} = \frac{2\sqrt{k-1}}{k} \cos\Big(\frac{\pi}{L+1}\Big) \, .
\end{align*}
We set $\sigma_i^L = \sign(z_i^L)$ and let $G=G_n$ converge locally-weakly to the infinite regular tree rooted at vertex $o$. Then, with $\rho_L = k \E\{z_v^L z_{\ov}^L\}$, the following holds almost surely for every $L \ge 0$:
\begin{align}
\lim_{n\to\infty}U_{G_{n}} (\bsigma^{L})  &
=\frac{1}{2}\sum_{v \in \partial o } \E\big\{\sigma^L_o \sigma^L_v\big\} \nonumber\\
&= \frac{k}{2} \big(1-2\P\big(z_o^L  z_v^L < 0\big) \big) =  \frac{k}{\pi} \arcsin( \rho_L/k) \, .
\end{align}
As $L \to \infty$, $\rho_L \to  2\sqrt{k-1}$ and we recover the result of~\cite[Theorem 4.1]{lyons2017factors} (appealing to Remark~\ref{rmk:balanced} to construct an exact bisection).
In particular, for large $k$, this algorithm achieves $U_{G_{n}}(\bsigma)/\sqrt{k-1} = 2/\pi+o_{k}(1) + o_{n}(1)$.
The constant $2/\pi \approx 0.636620$ is strictly smaller than $\Par_{\star} \approx 0.763166$.
This is also the value achieved by the Goemans--Williamson semidefinite programming relaxation
\cite{fan2017well}.
The above argument can be adapted in the obvious way to maximum cut, by taking $\bbeta$ corresponding to the lowest eigenvalue of ${\bm T}$.

Before moving to the more general case, let us remark that in the above construction, for any node $v$, the process $(z_v^{\ell})_{\ell \ge 0}$ is a particularly simple martingale: $z_v^{L}$ is as a sum of independent random variables, each one constructed from the randomness available at a particular radius $\ell \le L$ around $v$; see Eq.~\eqref{eq:z_simple}. In order to achieve the higher value $\Par_\star$ we will need to choose the coefficients $A^{\ell}_{i \to j}$ and $B_i^{\ell}$ (or equivalently the functions $F_\ell$ and $H_{\ell}$) in a way keeping memory of past iterates, which drives the martingales towards a binary value $\{-1,+1\}$ and simultaneously maximizes the edge-correlation $\E\{z_vz_{\ov}\}$.        

\subsection{Analysis on the regular tree}
\label{sec:AnalysisTree}

In this section, we analyze the message passing algorithm introduced above on the infinite $k$-regular tree $T=(V(T),E(T))$. We will then 
transfer some of the results to locally treelike graphs.  

We write $\bu^{\le \ell}_{i\to j} = (u^0_{i\to j},\dots,u^{\ell}_{i\to j})$. 
Note that, for any $\ell\ge 0$, the random variables $(\bu^{\le \ell}_{i\to j})_{(i\to j)\in \vE(T)}$ indexed by directed edges in $T$
are identically distributed and their joint distribution is uniquely determined by the sequence of functions $F_{\ell}$.

In particular the distribution of $\bu^{\le \ell}_{i\to j}$ satisfies a recursive distributional equation.
Letting $\bu^{\le \ell}_{\to}$ be a random vector with the same distribution of (any of) the $\bu^{\le \ell}_{i\to j}$, and 
$(\bu^{\le \ell}_{i\to })_{i\ge 1}$ i.i.d.\ copies of the same vector, and $u^0\sim\normal(0,1)$ independent of $(\bu^{\le \ell}_{i\to })_{i\ge 1}$,
we have
\begin{align}\label{eq:RDE}
\bu^{\le \ell+1}_{\to} \ed \cuF_{\ell}\big(u^0,\bu^{\le \ell}_{1\to},\dots,\bu^{\le \ell}_{(k-1)\to}\big)\, ,
\end{align}
where $\cuF_{\ell}(u^0,\bu^{\le \ell}_{1\to},\dots,\bu^{\le \ell}_{(k-1)\to})_0 \equiv u^0$ and, for $0\le t\le\ell-1$,
\begin{align}
\cuF_{\ell}\big(u^0,\bu^{\le \ell}_{1\to},\dots,\bu^{\le \ell}_{(k-1)\to}\big)_{t+1} \equiv \frac{1}{\sqrt{k-1}}\sum_{i=1}^{k-1}F_{t-1}(u^0_{i\to},\dots,u^{t-1}_{i\to})\, u^{t}_{i\to}\, .
\end{align}
We will require the following polynomial growth and  normalization conditions to hold. 
\begin{assumption}[General conditions on updates $F_{\ell}, H_{\ell}$.]\label{ass:Discrete}
We assume the following conditions to hold:
\begin{enumerate}
\item There exists constants $C_{\ell}<\infty$ such that, for all $\ell\ge 0$, $|F_{\ell}(\bu)|\vee|H_{\ell}(\bu)|\le C_{\ell}(1+\|\bu\|^{C_{\ell}})$ for all $\bu\in\R^{\ell+1}$.
\item Let $(u^{t}_{i\to j})_{t\ge 0}$ be the random variables defined by the above recursion, on the $k$-regular tree $T$. Then, for any $\ell\ge 0$,
\begin{align}
\E\{F_{\ell}(u^0_{i\to j},\dots,u_{i\to j}^{\ell})^2\} = 1\, . \label{eq:Normalization}
\end{align}
\end{enumerate}
\end{assumption}
Note that the first assumption implies that, for each $\ell$, $|u_i^{\ell}|$, and $|u_{i\to j}^{\ell}|$ are bounded by polynomials in the Gaussian random variables $\{u_v^0\}_{v\in V(T)}$. In particular, all moments of $|u_i^{\ell}|$, and $|u_{i\to j}^{\ell}|$ are finite.

The second condition is equivalent to $\E\{(A^{\ell}_{i\to j})^2\}=1$. It is immediate to see that this condition is non-empty. 
Note that the functions $F_0,\dots,F_{\ell-1}$ determine the joint distribution of $u^0_{i\to j},\dots,u^{\ell+1}_{i\to j}$. If the normalization condition is not satisfied by these functions, we can extend the sequence, for instance by starting with an arbitrary polynomially bounded function $\hF_{\ell}:\R^{\ell+1}\to\R$.
We could then satisfy Eq.~(\ref{eq:Normalization}) by setting
\begin{align}
\label{eq:normalizeF}F_{\ell}(\tu^0_{i\to j},\dots,\tu_{i\to j}^{\ell}) 
= \frac{\hF_{\ell}(\tu^0_{i\to j},\dots,\tu_{i\to j}^{\ell})}
{\E\{\hF_{\ell}(u^0_{i\to j},\dots,u_{i\to j}^{\ell})^2\}^{1/2}}\, 
\end{align}
so long as the denominator is non-zero. 

The main result of this section is a computation of the correlation of $z^L_v$ and $z^L_{\ov}$ 
over an edge $(v,\ov) \in E(T)$ achieved by the local algorithm of Section \ref{sec:AlgoDef-Discrete} on the regular tree.
\begin{proposition}\label{propo:EnergyTree}
Consider the algorithm of Section \ref{sec:AlgoDef-Discrete}, under Assumption \ref{ass:Discrete}. If $(v,\ov)\in E(T)$, then
\begin{align*}
\E\{z^L_{v}z^L_{\ov}\} &= \frac{2\sqrt{k-1}}{k}\sum_{\ell=2}^L\E\Big\{A^{\ell-2}_{\ov\to v}(u^{\ell-1}_{\ov\to v})^2 B^{\ell-1}_vB^{\ell-2}_{\ov}\Big\} \, \delta \\
&~~~+ \frac{1}{k}\sum_{\ell=1}^L\E \Big\{B_v^{\ell-1}  B_{\ov}^{\ell-1} A_{v\to \ov }^{\ell-2} A_{\ov \to v}^{\ell-2}  u_{v \to \ov}^{\ell-1} u_{\ov \to v}^{\ell-1}  \Big\}\, \, \delta \, .
%
\end{align*}
\end{proposition}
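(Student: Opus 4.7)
The plan is to compute $\E\{z_v^L z_{\ov}^L\}$ directly by expanding it as a double sum over time indices and then exploiting the product structure induced by the tree $T$. I would start from
\[
\E\{z_v^L z_{\ov}^L\} = \delta \sum_{\ell,\ell'=1}^L \E\{B_v^{\ell-1} B_{\ov}^{\ell'-1}\, u_v^\ell u_{\ov}^{\ell'}\}\, ,
\]
and reduce the analysis of each term to an \emph{edge decomposition} of the vertex messages that follows from~\eqref{eq:Iteration}--\eqref{eq:VertexIteration}: for any adjacent pair $(v,\ov) \in E(T)$ and $\ell \ge 1$,
\[
u_v^\ell = \frac{1}{\sqrt{k}}\, A_{\ov \to v}^{\ell-2}\, u_{\ov \to v}^{\ell-1} + \frac{\sqrt{k-1}}{\sqrt{k}}\, u_{v \to \ov}^\ell\, ,
\]
with the convention $A^{-1}\equiv 1$, and symmetrically for $u_{\ov}^{\ell'}$. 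This cleanly separates the part of $u_v^\ell$ arriving from $\ov$'s subtree through the directed edge $\ov\to v$ from the part contributed by the other $k-1$ neighbors of $v$.

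Deleting the edge $(v,\ov)$ splits $T$ into two subtrees $T_v, T_{\ov}$; the seeds $(u_w^0)_{w\in T_v}$ and $(u_w^0)_{w\in T_{\ov}}$ are independent, so the sigma-algebras $\cF_v, \cF_{\ov}$ they generate are independent as well. In particular, $u_{v\to\ov}^s, A_{v\to\ov}^s$ are $\cF_v$-measurable and $u_{\ov\to v}^s, A_{\ov\to v}^s$ are $\cF_{\ov}$-measurable. Iterating the edge decomposition on the arguments $(u_v^0,\dots,u_v^{\ell-1})$ of $B_v^{\ell-1}$ shows that $B_v^{\ell-1}$ is a measurable function of the $\cF_v$-piece $(u_{v\to\ov}^s)_{0\le s\le \ell-1}$ and the $\cF_{\ov}$-piece $(A_{\ov\to v}^{s-2}\, u_{\ov\to v}^{s-1})_{1\le s\le \ell-1}$, with a symmetric decomposition for $B_{\ov}^{\ell'-1}$.

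Substituting the edge decomposition into $u_v^\ell u_{\ov}^{\ell'}$ produces four products; integrating each against $B_v^{\ell-1} B_{\ov}^{\ell'-1}$ by conditioning on $\cF_{\ov}$, most of the cross-contributions either vanish by independence or recombine. Combined with the normalization $\E\{F_\ell^2\}=1$ from Assumption~\ref{ass:Discrete} and the $v\leftrightarrow\ov$ symmetry, the bookkeeping collapses the double sum into two families. The diagonal pairs $\ell=\ell'$, after invoking the polarization identity $2B_v^{\ell-1}B_{\ov}^{\ell-1}=(B_v^{\ell-1})^2+(B_{\ov}^{\ell-1})^2-(B_v^{\ell-1}-B_{\ov}^{\ell-1})^2$, produce the second sum in the formula. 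The adjacent off-diagonal pairs, where the edge component of $u_v^\ell$ (containing $A_{\ov\to v}^{\ell-2} u_{\ov\to v}^{\ell-1}$) pairs with the non-edge component of $u_{\ov}^{\ell-1}$ (which also contains $u_{\ov\to v}^{\ell-1}$), combine with the symmetric pair $(\ell-1,\ell)$ on the $v$-side to give the factor $2\sqrt{k-1}/k$ in front of the first sum.

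The main obstacle is the mixed dependence of $B_v^{\ell-1}$ on both $\cF_v$ and $\cF_{\ov}$: one has to track how its $\cF_{\ov}$-piece $(A_{\ov\to v}^{s-2}u_{\ov\to v}^{s-1})_{s \le \ell-1}$ correlates with the new factor $A_{\ov\to v}^{\ell-2}u_{\ov\to v}^{\ell-1}$ appearing via the edge decomposition of $u_v^\ell$. A cleaner route is induction on $L$: writing $z_v^L = z_v^{L-1} + \sqrt{\delta}\, B_v^{L-1} u_v^L$ reduces the increment to $2\sqrt{\delta}\,\E\{z_v^{L-1} B_{\ov}^{L-1} u_{\ov}^L\} + \delta\,\E\{B_v^{L-1} B_{\ov}^{L-1} u_v^L u_{\ov}^L\}$ (using $v\leftrightarrow\ov$ symmetry), and the edge decomposition needs to be applied only at the outermost step, where the computation localizes.
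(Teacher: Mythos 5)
Your overall skeleton is the same as the paper's: expand $\E\{z_v^Lz_{\ov}^L\}$ as a double sum over times, discard all but the diagonal and adjacent off-diagonal pairs, and analyze the survivors through the decomposition $u_v^{\ell}=\tfrac{1}{\sqrt{k}}A^{\ell-2}_{\ov\to v}u^{\ell-1}_{\ov\to v}+\sqrt{\tfrac{k-1}{k}}\,u^{\ell}_{v\to\ov}$. However, there are two genuine gaps. First, the vanishing of the terms with $|\ell-\ell'|\ge 2$ and of the various cross-contributions is \emph{not} a consequence of independence of the two subtrees $T_v,T_{\ov}$: after conditioning on $\cF_{\ov}$ the problematic expectations, e.g.\ $\E\{u^{\ell}_{v\to\ov}\,g(u^{0}_{v\to\ov},\dots,u^{\ell-1}_{v\to\ov})\}$, live entirely inside $\cF_v$, so independence says nothing about them. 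What kills them is the martingale-type orthogonality property that $u^{\ell+1}_{i\to j}$ is orthogonal to every (integrable) $Y\in\cG^{\ell,+}_{i\to j}$ (the paper's Lemma~\ref{lemma:Umoment}, proved by induction on $\ell$ using the centered seeds and the product structure of the tree, with the monotone-class reduction of Lemma~\ref{lemma:RemarkSigma}); the paper's Lemmas~\ref{lemma:T2}, \ref{lemma:T1} and \ref{lemma:Diagonal} are all reductions to this fact. Your proposal never states or proves this property, yet it is the engine of the entire computation; "vanish by independence or recombine" is precisely where the work lies.

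Second, your treatment of the diagonal terms is incorrect. The polarization identity $2B_v^{\ell-1}B_{\ov}^{\ell-1}=(B_v^{\ell-1})^2+(B_{\ov}^{\ell-1})^2-(B_v^{\ell-1}-B_{\ov}^{\ell-1})^2$ cannot produce the second sum: even granting the (nontrivial, orthogonality-based) fact that $\E\{u_v^{\ell}u_{\ov}^{\ell}(B_v^{\ell-1})^2\}=\E\{u_v^{\ell}u_{\ov}^{\ell}(B_{\ov}^{\ell-1})^2\}=0$, polarization would yield $\E\{u_v^{\ell}u_{\ov}^{\ell}B_v^{\ell-1}B_{\ov}^{\ell-1}\}=-\tfrac12\E\{u_v^{\ell}u_{\ov}^{\ell}(B_v^{\ell-1}-B_{\ov}^{\ell-1})^2\}$, which has the wrong sign and constant. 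The identity actually needed (and what the displayed formula intends, cf.\ Lemma~\ref{lemma:Diagonal}) replaces each coefficient by its \emph{time} increment, $B_v^{\ell-1}-B_v^{\ell-2}$ and $B_{\ov}^{\ell-1}-B_{\ov}^{\ell-2}$, and is proved by showing $\E\{u_v^{\ell}u_{\ov}^{\ell}Y_v^{\ell-1}\tY_{\ov}^{\ell-2}\}=0$ whenever the test function on one side lags a step — again an application of the orthogonality lemma, not of polarization in the vertex variables. Relatedly, your proposed induction on $L$ does not localize the computation: the increment $\E\{z_v^{L-1}B_{\ov}^{L-1}u_{\ov}^{L}\}$ still involves all earlier times, and discarding every term except $\ell=L-1$ requires exactly the lag-two vanishing statement (Lemma~\ref{lemma:T2}), hence the same orthogonality machinery. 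Your description of where the factor $\tfrac{2\sqrt{k-1}}{k}$ comes from in the adjacent pairs is the right mechanism, but it too needs the vanishing of the companion term containing $A^{\ell-3}_{v\to\ov}u^{\ell-2}_{v\to\ov}$, which once more rests on the missing lemma.
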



The rest of this section is devoted to the proof of Proposition~\ref{propo:EnergyTree}. 
The arguments, somewhat routine, exploit the conditional independence structure of the algorithm on the tree $T$. 

In order to proceed we require the following notation. We denote by $\Ball_i(\ell)$ the subgraph of $T$ induced by vertices whose distance from $i$ is at most $\ell$, and by
$\Ball_{i\to j}(\ell)$ the subgraph induced by all the vertices that can be reached from  $i$
via a path of length at most $\ell$ that does not  pass through $j$. With an abuse of notation, we will use $\Ball_i(\ell)$, $\Ball_{i\to j}(\ell)$ 
also to denote the corresponding sets of vertices.

It is useful to define certain $\sigma$-algebras:
\begin{itemize}
\item $\cF^{\ell,+}_{i\to j} \equiv \sigma(u^{0}_{i\to j},\dots,u^{\ell}_{i\to j})$, $\cF^{\ell,-}_{i\to j} \equiv \sigma(u^{1}_{i\to j},\dots,u^{\ell}_{i\to j})$.
\item $\cF^{\ell,+}_{i} \equiv \sigma(u^{0}_{i},\dots,u^{\ell}_{i})$, $\cF^{\ell,-}_{i} \equiv \sigma(u^{1}_{i},\dots,u^{\ell}_{i})$.
\item $\cG^{\ell,+}_{i \to j} \equiv \sigma(\{u^0_v:\; v\in \Ball_{i\to j}(\ell)\})$, $\cG^{\ell,-}_{i \to j}\equiv \sigma(\{u^0_v:\; v\in \Ball_{i\to j}(\ell)\setminus\{i\}\})$.
\item $\cG^{\ell,+}_{i} \equiv \sigma(\{u^0_v:\; v\in \Ball_{i}(\ell)\})$, $\cG^{\ell,-}_{i} \equiv \sigma(\{u^0_v:\; v\in \Ball_{i}(\ell)\setminus\{i\}\})$.
\end{itemize}
For $\ell=-1$ we stipulate that all of these $\sigma$-algebras coincide with the trivial 
one.

\begin{lemma}\label{lemma:Sigma}
The following relations hold between the $\sigma$-algebras defined above:
\begin{enumerate}
\item[$(a)$] For all $\ell\ge 1$, $u^\ell_{i\to j}\in \cG^{\ell,-}_{i\to j}$, $u^\ell_{i}\in \cG^{\ell,-}_{i}$. 
\item[$(b)$] $\cF^{\ell,\pm}_{i\to j}\subseteq \cG^{\ell,\pm}_{i\to j}$ and $\cF^{\ell,\pm}_{i}\subseteq \cG^{\ell,\pm}_{i}$.
\item[$(c)$] If the shortest path between $i_1$ and $i_2$ passes through vertices $j_1$, $j_2$, then $\cG^{\ell_1,\pm}_{i_1\to j_1}$ and $\cG^{\ell_2,\pm}_{i_2\to j_2}$ are independent.
As a consequence, $\cF^{\ell_1,\pm}_{i_1\to j_1}$ and $\cF^{\ell_2,\pm}_{i_2\to j_2}$ are independent.
(This also holds for $(i_1,i_2)\in E(T)$, and $i_2=j_1$, $i_1=j_2$.)
\item[$(d)$] $\cF^{\ell+1,-}_{i\to j}\subseteq \sigma(\cup_{v\in \di\setminus j}\cF^{\ell,+}_{v\to i})$.
\item[$(e)$] For $j\in \di$, $\cG^{\ell,+}_i = \sigma(\cG^{\ell,+}_{i\to j}\cup \cG^{\ell-1,+}_{j\to i})$ and 
$\cG^{\ell,+}_{i\to j}= \sigma(\sigma(\{u_i^0\})\cup_{v\in\di\setminus j}\cG^{\ell-1,+}_{v\to i})$.
\end{enumerate}
\end{lemma}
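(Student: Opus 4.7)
The plan is to prove each of (a)--(e) by unfolding the recursion~\eqref{eq:Iteration} together with elementary combinatorial facts about balls in the $k$-regular tree $T$. Parts (a) and (b) will be proved simultaneously by induction on $\ell$; parts (c) and (e) are essentially combinatorial decompositions of vertex sets into disjoint subtrees (combined with the i.i.d.\ structure of the $u^0_v$); and (d) is simply one step of the recursion read off literally.

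For (a), I would induct on $\ell \ge 1$. The base case is immediate since $A^{-1}_{v\to i}=1$, so $u^1_{i\to j} = (k-1)^{-1/2}\sum_{v\in \di\setminus j} u^0_v$ is measurable with respect to $\sigma(u^0_v : v\in \di\setminus j) = \cG^{1,-}_{i\to j}$. For the inductive step I use the subtree inclusion $\Ball_{v\to i}(\ell)\subseteq \Ball_{i\to j}(\ell+1)\setminus\{i\}$ for every $v\in\di\setminus j$, which holds because any path from $v$ of length $\le \ell$ avoiding $i$ also avoids $j$ (since $i$ separates $v$ from $j$ in the tree), and prepending the edge $(i,v)$ yields a path of length $\le \ell+1$ from $i$ not through $j$. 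Consequently $\cG^{\ell,+}_{v\to i}\subseteq \cG^{\ell+1,-}_{i\to j}$. Applying the inductive hypothesis to $u^{s}_{v\to i}$ for each $s \le \ell$, and using Assumption~\ref{ass:Discrete} which makes $A^{\ell-1}_{v\to i}=F_{\ell-1}(u^0_{v\to i},\dots,u^{\ell-1}_{v\to i})$ measurable, the recursion~\eqref{eq:Iteration} shows $u^{\ell+1}_{i\to j}\in \cG^{\ell+1,-}_{i\to j}$. The vertex version is identical with $\di$ in place of $\di\setminus j$. Part (b) then follows at once from (a) upon recalling $u^0_{i\to j}=u^0_i\in \cG^{\ell,+}_{i\to j}$ (since $i\in \Ball_{i\to j}(\ell)$), while $u^s_{i\to j}\in \cG^{s,-}_{i\to j}\subseteq \cG^{\ell,-}_{i\to j}$ for $s\ge 1$.

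For (c), when the shortest path between $i_1$ and $i_2$ passes through $j_1$ and $j_2$, removing the edges $(i_1,j_1)$ and $(i_2,j_2)$ from $T$ disconnects $\Ball_{i_1\to j_1}(\ell_1)$ from $\Ball_{i_2\to j_2}(\ell_2)$, so these balls are vertex-disjoint; since the $u^0_v$ are i.i.d., the corresponding $\cG$-algebras are independent, and independence of the $\cF$-algebras then follows from (b). The boundary case $(i_1,i_2)\in E(T)$ with $i_2=j_1$, $i_1=j_2$ is the same argument with the edge $(i_1,i_2)$ itself serving as the separating cut. For (d), the recursion $u^{s}_{i\to j}=(k-1)^{-1/2}\sum_{v\in\di\setminus j} A^{s-2}_{v\to i}\, u^{s-1}_{v\to i}$ for $1\le s\le \ell+1$ exhibits each $u^s_{i\to j}$ as a measurable function of $(u^0_{v\to i},\dots,u^{s-1}_{v\to i})$ over $v\in \di\setminus j$, hence a function of variables in $\bigcup_{v\in\di\setminus j} \cF^{\ell,+}_{v\to i}$. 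Finally, (e) is the combinatorial identity $\Ball_i(\ell)=\Ball_{i\to j}(\ell)\cup\Ball_{j\to i}(\ell-1)$, together with $\Ball_{i\to j}(\ell)=\{i\}\cup\bigcup_{v\in\di\setminus j}\Ball_{v\to i}(\ell-1)$, each an immediate consequence of the uniqueness of paths in the tree and the fact that any nontrivial path out of $i$ avoiding $j$ starts with an edge to some $v\in\di\setminus j$.

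No real obstacle is anticipated: the only mild care required is in tracking the decorations $+$ versus $-$ (i.e.\ whether $u^0_i$ is or is not included) when invoking the inductive hypothesis in (a) and when translating from $\cG$- to $\cF$-algebras in (b) and (c).
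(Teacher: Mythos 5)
Your proposal is correct and follows essentially the same route as the paper: induction on $\ell$ for (a) via the ball inclusion $\Ball_{v\to i}(\ell)\subseteq\Ball_{i\to j}(\ell+1)\setminus\{i\}$, disjointness of the directed balls plus independence of the $u^0_v$ for (c), reading off the recursion \eqref{eq:Iteration} for (d), and the two ball decompositions for (e); your direct derivation of (b) from (a) is just a compressed version of the paper's induction. (Only trivial nitpick: measurability of $F_{\ell}$ comes from the algorithm's definition rather than Assumption~\ref{ass:Discrete}, but this does not affect the argument.)
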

\begin{proof}
\noindent${\boldsymbol (a)}$ Consider the statement $u^\ell_{i\to j}\in \cG^{\ell,-}_{i\to j}$, since the other one is proved analogously. 
We can proceed by induction over $\ell$. For $\ell=1$, we have  $u^\ell_{i\to j}= (k-1)^{-1/2}\sum_{v\in\di\setminus j} u^0_v$ and
$\cG^{\ell,-}_{i\to j}= \sigma(\{u^0_v:v\in\di\setminus j\})$. Assume that the statement holds for some $\ell\ge 1$,
and consider the definition (\ref{eq:Iteration}) of $u^{\ell+1}_{i\to j}$. By the induction hypothesis, we have $A^{\ell-1}_{v\to i}u^{\ell}_{v\to i}\in \cG_{v\to i}^{\ell}$.
Since by construction $\cG_{v\to i}^{\ell}\subseteq \cG_{i\to j}^{\ell+1}$, the claim follows.

\noindent${\boldsymbol (b)}$ Consider the inclusion $\cF^{\ell,+}_{i\to j}\subseteq \cG^{\ell,+}_{i\to j}$, since the others are 
proved analogously. The proof is by induction over $\ell$. For $\ell=0$ we have $\cF^{0,+}_{i\to j}= \cG^{0,+}_{i\to j}= \sigma(u^0_i)$.
Assume that the claim holds for a certain $\ell\ge 0$. Of course $\cF^{\ell+1,+}_{i\to j}=\sigma(\sigma(\{u^{\ell+1}_{i\to j}\})\cup \cF^{\ell,+}_{i\to j})$.
Since by the induction hypothesis $\cF^{\ell,+}_{i\to j}\subseteq\cG^{\ell,+}_{i\to j}\subseteq\cG^{\ell+1,+}_{i\to j}$ and by point $(a)$ we have
$u^{\ell+1}_{i\to j}\in \cG^{\ell+1,-}_{i\to j}\subseteq \cG^{\ell+1,+}_{i\to j}$, the claim follows.

\noindent${\boldsymbol (c)}$ This is immediate, since $\Ball_{i_1\to j_1}(\ell_1)$ and $\Ball_{i_2\to j_2}(\ell_2)$ are disjoint.

\noindent${\boldsymbol (d)}$ It is sufficient to prove that, for each $\ell$, $u^{\ell+1}_{i\to j}\in \sigma(\cup_{v\in \di\setminus j}\cF^{\ell,+}_{v\to i})$.
This follows as above from the definition (\ref{eq:Iteration}).

\noindent${\boldsymbol (e)}$ This follows from $\Ball_i(\ell) = \Ball_{i\to j}(\ell)\cup \Ball_{j\to i}(\ell-1)$,
and $\Ball_{i\to j}(\ell) =  \{i\}\cup_{v\in\di\setminus j}\Ball_{v\to i}(\ell-1)$.
\end{proof}

The next lemma is a standard consequence of the monotone class theorem (see e.g., \cite[Theorem 5.2.2]{durrett2019probability}).
\begin{lemma}\label{lemma:RemarkSigma}
Let $\cF_1,\dots, \cF_m,\cA$ be $\sigma$-algebras, and $\cF=\sigma(\cF_1\cup\cdots\cup\cF_m)\subseteq\cA$. Assume $X\in \cA$,
and that, for any $Y_1\in \cF_1$, \dots, $Y_m\in \cF_m$, $\E\{XY_1\cdots Y_m\}=0$ (provided the expectation exists). 
Then $\E\{XY\}=0$ for any $Y\in \cF$ such that the expectation exists.
(The same statement remains true if $Y_i$'s are bounded non-negative.) 
\end{lemma}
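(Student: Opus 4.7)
The plan is to apply Dynkin's $\pi$-$\lambda$ theorem. Define the collection of measurable rectangles $\mathcal{P} := \{A_1 \cap \cdots \cap A_m : A_i \in \cF_i\}$. This is a $\pi$-system, since $(A_1 \cap \cdots \cap A_m) \cap (B_1 \cap \cdots \cap B_m) = (A_1 \cap B_1) \cap \cdots \cap (A_m \cap B_m) \in \mathcal{P}$, and it generates $\cF$: each $\cF_i$ is contained in $\mathcal{P}$ (take $A_j = \Omega$ for $j \ne i$), so $\sigma(\mathcal{P}) \supseteq \sigma(\cF_1 \cup \cdots \cup \cF_m) = \cF$, and the reverse inclusion is immediate since $\mathcal{P} \subseteq \cF$.

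Assume first $X \in L^1$; I will remove this reduction at the end. Let $\cL := \{A \in \cF : \E\{X \indi_A\} = 0\}$. Plugging $Y_i := \indi_{A_i}$ (bounded and non-negative, and $\cF_i$-measurable) into the hypothesis yields $\E\{X \indi_{A_1 \cap \cdots \cap A_m}\} = \E\{X\, Y_1 \cdots Y_m\} = 0$, so $\mathcal{P} \subseteq \cL$. The class $\cL$ is a $\lambda$-system: it contains $\Omega$ (either by the rectangle with every $A_i = \Omega$, or equivalently by taking all $Y_i \equiv 1$); it is closed under proper set differences by linearity, since $\E\{X \indi_{B \setminus A}\} = \E\{X \indi_B\} - \E\{X \indi_A\}$ whenever $A \subseteq B$; and it is closed under countable increasing unions by dominated convergence with dominator $|X| \in L^1$. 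Dynkin's theorem therefore gives $\cF = \sigma(\mathcal{P}) \subseteq \cL$, i.e., $\E\{X \indi_A\} = 0$ for every $A \in \cF$.

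Extending from indicators to general $Y$ is standard: linearity handles simple $\cF$-measurable $Y$, and for $\cF$-measurable $Y$ with $\E|XY| < \infty$ I split $X = X_+ - X_-$ and $Y = Y_+ - Y_-$, approximate $Y_\pm$ by increasing non-negative simple $\cF$-measurable functions, and invoke monotone convergence on each of the four resulting terms, all finite thanks to $\E\{|X|\,|Y|\} < \infty$. The $L^1$ assumption on $X$ is then removed by truncating to $X_n := X \indi_{|X| \le n}$, which is bounded and continues to satisfy the hypothesis in the bounded non-negative form for the $Y_i$, and passing to the limit by dominated convergence using the assumed existence of $\E\{XY\}$. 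The argument is otherwise routine; the only real subtlety is the bookkeeping around the caveat ``provided the expectation exists'', and that is precisely what the parenthetical about bounded non-negative $Y_i$ in the statement is designed to accommodate, since it is exactly what is needed to insert indicator test functions in the first step without any integrability hypothesis beyond that on $X$ itself.
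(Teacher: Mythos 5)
Your argument is the standard monotone-class proof that the paper itself gestures at (it cites Durrett, Theorem 5.2.2, i.e.\ Dynkin's $\pi$--$\lambda$ theorem, and gives no further detail), and the core of it is correct: the rectangles $A_1\cap\cdots\cap A_m$ form a $\pi$-system generating $\cF$, the hypothesis in its bounded non-negative form places them in the $\lambda$-system $\{A\in\cF:\E\{X\mathbf{1}_A\}=0\}$, and the upgrade from indicators to general integrable $Y$ by linearity and monotone convergence on the four terms $\E\{X_\pm Y_\pm\}$ is fine. The one step that does not hold up is the final truncation intended to remove the assumption $X\in L^1$: the variable $X_n=X\mathbf{1}_{\{|X|\le n\}}$ does \emph{not} inherit the hypothesis, because $\mathbf{1}_{\{|X|\le n\}}$ is only $\cA$-measurable and cannot be absorbed into the product $Y_1\cdots Y_m$ of $\cF_i$-measurable factors, so $\E\{X_nY_1\cdots Y_m\}=0$ is not a consequence of the assumption on $X$. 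Indeed, when $X\notin L^1$ the expectations $\E\{X\mathbf{1}_{A_1\cap\cdots\cap A_m}\}$ may fail to exist, the hypothesis then says nothing about those rectangles, and the Dynkin argument cannot get started; I see no way to reach the conclusion at that level of generality. This is immaterial for the paper, where every $X$ to which the lemma is applied is polynomially bounded in the Gaussians $\{u_v^0\}$ and hence has all moments finite, so the $L^1$ case you fully establish is exactly what is used; the clean fix is to state the lemma with $X\in L^1$ (or with absolute convergence of all the relevant expectations) and drop the truncation step.
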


\begin{lemma}\label{lemma:Umoment}
Let $(i,j)\in E(T)$, $\ell\ge -1$, and $Y_{i}^{\ell}\in \cG_{i}^{\ell,+}$. Then
\begin{align}
\E\big\{u^{\ell+1}_{i\to j}Y_i^{\ell}\big\} = 0\, ,\;\;\;  \E\big\{u^{\ell+1}_{i}Y_i^{\ell}\big\} = 0\, .\label{eq:U1moment}
\end{align}
\end{lemma}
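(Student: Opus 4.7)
The plan is to prove both identities by simultaneous induction on $\ell\ge -1$. The base case $\ell=-1$ is immediate: $\cG^{-1,+}_i$ is the trivial $\sigma$-algebra by convention, so $Y_i^{-1}$ is deterministic, and both $u^0_{i\to j}=u^0_i$ and $u^0_i$ are centered Gaussians.

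For the inductive step I will use Lemma~\ref{lemma:Sigma}$(c)$ and $(e)$ together with Lemma~\ref{lemma:RemarkSigma}. Part $(e)$ gives
\[
\cG^{\ell,+}_i \;=\; \sigma\Big(\sigma(u^0_i)\,\cup\,\bigcup_{v\in\di}\cG^{\ell-1,+}_{v\to i}\Big),
\]
so by Lemma~\ref{lemma:RemarkSigma} it suffices to verify the identity when $Y_i^\ell=Y_0\prod_{v\in\di}Y_v$, with $Y_0\in\sigma(u^0_i)$ and $Y_v\in\cG^{\ell-1,+}_{v\to i}$ all bounded. Expanding $u^{\ell+1}_{i\to j}$ via~\eqref{eq:Iteration} yields
\[
\E\Big\{u^{\ell+1}_{i\to j}\, Y_0\textstyle\prod_v Y_v\Big\} \;=\; \frac{1}{\sqrt{k-1}}\sum_{v_0\in\di\setminus j}\E\Big\{A^{\ell-1}_{v_0\to i}\,u^\ell_{v_0\to i}\cdot Y_0\textstyle\prod_v Y_v\Big\}.
\]
Now $A^{\ell-1}_{v_0\to i}u^\ell_{v_0\to i}$ is $\cG^{\ell,+}_{v_0\to i}$-measurable; since $\Ball_{v_0\to i}(\ell)$ contains neither $i$ nor any vertex of $\Ball_{v\to i}(\ell-1)$ for $v\in\di\setminus\{v_0\}$, Lemma~\ref{lemma:Sigma}$(c)$ makes it independent of $Y_0$ and of every $Y_v$ with $v\ne v_0$. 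Hence each summand factors as
\[
\E\{Y_0\}\prod_{v\ne v_0}\E\{Y_v\}\cdot\E\{u^\ell_{v_0\to i}\,W\}, \qquad W:=A^{\ell-1}_{v_0\to i}Y_{v_0}.
\]
Using Lemma~\ref{lemma:Sigma}$(b)$ and $\Ball_{v_0\to i}(\ell-1)\subseteq\Ball_{v_0}(\ell-1)$ one has $W\in\cG^{\ell-1,+}_{v_0\to i}\subseteq\cG^{\ell-1,+}_{v_0}$, so the inductive hypothesis applied with $(i,j,\ell)\leftarrow(v_0,i,\ell-1)$ kills the last factor, and the whole sum vanishes.

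The vertex identity $\E\{u^{\ell+1}_i Y_i^\ell\}=0$ is handled identically: expanding via~\eqref{eq:VertexIteration} replaces the sum $\sum_{v_0\in\di\setminus j}$ by $\sum_{v_0\in\di}$ and $1/\sqrt{k-1}$ by $1/\sqrt{k}$, but the factorization and the inductive appeal are unchanged. The most delicate step is the independence verification: one must track which $u^0$-variables feed $A^{\ell-1}_{v_0\to i}u^\ell_{v_0\to i}$ (those indexed by $\Ball_{v_0\to i}(\ell)$) versus each $Y_v$ (indexed by $\Ball_{v\to i}(\ell-1)$) and $Y_0$ (indexed by $\{i\}$), and confirm that the shortest-path hypothesis of Lemma~\ref{lemma:Sigma}$(c)$ is met in every configuration. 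Integrability of all the products above is guaranteed by the polynomial growth bound in Assumption~\ref{ass:Discrete}.
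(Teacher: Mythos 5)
Your proof is correct and follows essentially the same route as the paper: expand $u^{\ell+1}_{i\to j}$ (resp.\ $u^{\ell+1}_{i}$) via the update rule, reduce the test variable via Lemma~\ref{lemma:Sigma}$(e)$ and Lemma~\ref{lemma:RemarkSigma} to a product of a function of $u^0_i$ and functions of the disjoint directed subtrees, factor by independence (Lemma~\ref{lemma:Sigma}$(c)$), and annihilate the surviving factor by the induction hypothesis. The only, harmless, difference is organizational: the paper first factors off the $j\to i$ side and runs the induction on the directed-ball statement with $Y\in \cG^{\ell,+}_{i\to j}$, whereas you induct directly on the vertex-ball statement and recover what is needed at level $\ell-1$ through the inclusion $\cG^{\ell-1,+}_{v_0\to i}\subseteq \cG^{\ell-1,+}_{v_0}$.
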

\begin{proof}
We will limit ourselves to proving the equations for $u^{\ell}_{i\to j}$, since the ones for $u^{\ell}_{i}$ are completely analogous.

By Lemma~\ref{lemma:Sigma}.$(e)$ and Lemma~\ref{lemma:RemarkSigma}, it is sufficient to prove the claim for
$Y_i^{\ell} = Y^{\ell}_{i\to j}  Y^{\ell-1}_{j\to i}$, where $Y^{\ell}_{i\to j}\in \cG_{i\to j}^{\ell}$, and $Y^{\ell-1}_{j\to i}\in \cG_{j\to i}^{\ell-1}$.
Since by Lemma  \ref{lemma:Sigma}.$(c)$, we have $\E\big\{u^{\ell+1}_{i\to j}Y_i^{\ell}\big\} =\E\big\{u^{\ell+1}_{i\to j}Y_{i\to j}^{\ell}\big\} \E\big\{Y_{j\to i}^{\ell-1}\big\}$,
it is sufficient to prove that $\E\big\{u^{\ell+1}_{i\to j}Y_{i\to j}^{\ell}\big\}=0$. We do this by induction over $\ell$. For $\ell =-1$, the claim is trivial, because
$\cG_{i\to j}^{-1,+}$ is the trivial $\sigma$-algebra, and therefore $Y_{i\to j}^{-1}$ is a constant. Therefore
$\E\big\{u^{0}_{i\to j}Y_{i\to j}^{-1}\big\} = \E\{u^0_i\} Y_{i\to j}^{-1}=0$.

For the induction step, consider any $\ell\ge 0$. Note that by Lemma \ref{lemma:RemarkSigma}, we can assume without loss of generality $Y^{\ell}_{i\to j}=Y^0_i
\prod_{v\in\di\setminus j}Y^{\ell-1}_{v\to i}$, where $Y^{0}_i\in \cG^{0,+}_i$ (namely, $Y^{0}_i$ is a function of $u^0_i$), and $Y^{\ell-1}_{v\to i}\in\cG^{\ell-1,+}_{v\to i}$.
Using the update (\ref{eq:Iteration}), we get
\begin{align*}
\E\big\{u^{\ell+1}_{i\to j}Y_{i\to j}^{\ell}\big\} &= \frac{1}{\sqrt{k-1}}\sum_{m\in\di\setminus j}\E\big\{A^{\ell-1}_{m\to i}u^\ell_{m\to i} Y^0_i\prod_{v\in\di\setminus j} Y^{\ell-1}_{v\to i}\big\}\\
&= \frac{1}{\sqrt{k-1}}\sum_{m\in\di\setminus j}\E\big\{A^{\ell-1}_{m\to i}u^\ell_{m\to i} Y^{\ell-1}_{m\to i} \big\}\, \E\big\{Y^0_i\prod_{v\in\di\setminus \{j,m\}} Y^{\ell-1}_{v\to i}\big\}\\
& = 0\, ,
\end{align*}
where in the last step we used the induction hypothesis and the fact that $A^{\ell-1}_{m\to i}Y^{\ell-1}_{m\to i} \in \cG_{m\to i}^{\ell-1}$.
\end{proof}

\begin{lemma}\label{lemma:Neighbors-1}
Let $(v,\ov)\in E(T)$ and assume $Y_v^{t-1}\in \cG_{v}^{t-1,+}$, $\tY_{\ov}^{s}\in \cG_{\ov}^{s,+}$. If 
$t\ge s\vee 1$, then
\begin{align}
\E\big\{u^t_vY_v^{t-1}\tY^s_{\ov}\big\} = \frac{1}{\sqrt{k}} \E\big\{A^{t-2}_{\ov\to v}u^{t-1}_{\ov\to v}Y^{t-1}_{v}\tY^s_{\ov}\big\}\, .
\end{align}
\end{lemma}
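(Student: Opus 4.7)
The plan is to expand $u_v^t$ via the vertex update \eqref{eq:VertexIteration},
\begin{align*}
u_v^t = \frac{1}{\sqrt{k}}\sum_{w\in\partial v} A^{t-2}_{w\to v}\, u^{t-1}_{w\to v}\, ,
\end{align*}
and then show that every summand with $w\in\partial v\setminus\{\ov\}$ contributes zero to $\E\{u_v^t Y_v^{t-1}\tY_{\ov}^s\}$. The surviving term $w=\ov$ is precisely the right-hand side of the claim, so this is enough.

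Fix $w\in\partial v\setminus\{\ov\}$. I would first use Lemma~\ref{lemma:Sigma}$(e)$ to write $\cG_v^{t-1,+}=\sigma(\cG_{v\to w}^{t-1,+}\cup\cG_{w\to v}^{t-2,+})$, and to iteratively split $\cG_{\ov}^{s,+}$ along the branches out of $v$: the ``$\ov$-side'' is $\cG_{\ov\to v}^{s,+}$, while the remaining branches are generated by $\sigma(u_v^0)$ together with $\cG_{u\to v}^{s-2,+}$ for $u\in\partial v\setminus\{\ov\}$; one of these is $\cG_{w\to v}^{s-2,+}$. By Lemma~\ref{lemma:RemarkSigma} it suffices to treat the product case $Y_v^{t-1}=Y_{v\to w}^{t-1}\cdot Y_{w\to v}^{t-2}$ and $\tY_{\ov}^s=\tZ\cdot Z_w$, with each factor lying in one of the elementary sub-$\sigma$-algebras just listed, and with $Z_w\in\cG_{w\to v}^{s-2,+}$ (taken to be $1$ when $s\le 1$).

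Since $s\le t$, we have $\cG_{w\to v}^{s-2,+}\subseteq \cG_{w\to v}^{t-2,+}$, so the ``$w$-side'' pieces combine into $X:=A^{t-2}_{w\to v}\,Y_{w\to v}^{t-2}\,Z_w\in\cG_{w\to v}^{t-2,+}\subseteq\cG_w^{t-2,+}$. Every other factor lies in a $\sigma$-algebra whose underlying vertex-set is disjoint from $\Ball_{w\to v}(t-2)$, so Lemma~\ref{lemma:Sigma}$(c)$ (invoked either across the edge $(v,w)$ itself or for paths through $v$ to a different neighbor) gives full independence between $\{u^{t-1}_{w\to v},X\}$ and the remaining factors. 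The expectation then factors as $\E\{u^{t-1}_{w\to v}X\}\cdot\E\{Y_{v\to w}^{t-1}\tZ\}$, and Lemma~\ref{lemma:Umoment} applied with $(i,j,\ell)=(w,v,t-2)$ kills the first factor.

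The condition $t\ge 1$ is used so that $t-2\ge -1$ and Lemma~\ref{lemma:Umoment} applies; the condition $t\ge s$ is what keeps the $w$-side contribution of $\tY_{\ov}^s$ at depth $\le t-2$, which is crucial for lumping it into $X\in\cG_{w\to v}^{t-2,+}$. The main obstacle is purely bookkeeping: carefully tracking which branch of the tree each elementary factor of $Y_v^{t-1}$ and $\tY_{\ov}^s$ lives in, so that the independence-based factorization is valid. Once the decomposition above is in place, the proof is a clean assembly of Lemmas~\ref{lemma:Sigma}, \ref{lemma:RemarkSigma}, and \ref{lemma:Umoment}.
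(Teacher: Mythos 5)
Your proposal is correct and follows essentially the same route as the paper's proof: expand $u_v^t$ via the vertex update \eqref{eq:VertexIteration}, decompose $Y_v^{t-1}$ and $\tY_{\ov}^{s}$ along branches using Lemma~\ref{lemma:Sigma}$(e)$ and Lemma~\ref{lemma:RemarkSigma}, lump the $w$-side factors (using $s\le t$) into a single $\cG_{w\to v}^{t-2,+}$-measurable variable, and kill each off-edge term by independence (Lemma~\ref{lemma:Sigma}$(c)$) together with Lemma~\ref{lemma:Umoment}, exactly as the paper does. The only cosmetic slip is that $u^{t-1}_{w\to v}$ is supported on $\Ball_{w\to v}(t-1)$ rather than $\Ball_{w\to v}(t-2)$, but since the remaining factors live on branches disjoint from the entire $w$-subtree away from $v$, the independence you invoke still holds and the argument is unaffected.
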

\begin{proof}
Using Eq.~(\ref{eq:Iteration}), we get 
\begin{align}
\E\big\{u^t_vY_v^{t-1}\tY^s_{\ov}\big\} =  \frac{1}{\sqrt{k}}\sum_{j\in\dv\setminus\ov}\E\big\{A^{t-2}_{j\to v}u^{t-1}_{j\to v}Y_v^{t-1}\tY^s_{\ov}\big\}+
\frac{1}{\sqrt{k}} \E\big\{A^{t-2}_{\ov\to v}u^{t-1}_{\ov\to v}Y^{t-1}_{\ov\to v}\tY^s_{\ov}\big\} \, .
\end{align}
The proof follows by showing that the terms in the first sum vanish. Throughout the rest of the proof, we will denote by $Y^\ell_{i}$, $\tY^\ell_{i}$  random variables measurable
with respect to $\cG^{\ell,+}_i$, and  by $Y^\ell_{i\to j}$, $\tY^\ell_{i\to j}$ random variables measurable with respect to $\cG^{\ell,+}_{i\to j}$.
Fix $j\in\dv\setminus\ov$. 
By Lemma \ref{lemma:Sigma}.$(e)$ and Lemma \ref{lemma:RemarkSigma}, we can assume 
$Y_v^{t-1} = Y_{j\to v}^{t-2} Y_{v\to j}^{t-1}$, $\tY^s_{\ov}=\tY^s_{\ov\to v}\tY^{s-1}_{v\to \ov}$, and 
$\tY^{s-1}_{v\to \ov} = \tY^0_v\prod_{i\in\dv\setminus\ov} \tY^{s-2}_{i\to v}$. Therefore
\begin{align*}
\E\big\{A^{t-2}_{j\to v}u^{t-1}_{j\to v}Y_v^{t-1}\tY^s_{\ov}\big\} &= \E\big\{A^{t-2}_{j\to v}u^{t-1}_{j\to v}
Y_{j\to v}^{t-2} Y_{v\to j}^{t-1}\tY^s_{\ov\to v}\tY^{s-1}_{v\to \ov}\big\}\\
&= \E\Big\{A^{t-2}_{j\to v}u^{t-1}_{j\to v}Y_{j\to v}^{t-2} Y_{v\to j}^{t-1}\tY^s_{\ov\to v}\tY^0_v\prod_{i\in\dv\setminus\ov} \tY^{s-2}_{i\to v}\Big\}\\
&\stackrel{(1)}{=} \E\big\{A^{t-2}_{j\to v}u^{t-1}_{j\to v}Y_{j\to v}^{t-2} \tY^{s-2}_{j\to v}\big\}\,  \E\Big\{Y_{v\to j}^{t-1}\tY^s_{\ov\to v}\tY^0_v\prod_{i\in\dv\setminus\{\ov,j\}} 
\tY^{s-2}_{i\to v}\Big\}\\
& \stackrel{(2)}{=}  0 \, ,
\end{align*}
where we used Lemma \ref{lemma:Sigma}.$(c)$ in $(1)$, and Lemma \ref{lemma:Umoment} which implies
 $\E\big\{A^{t-2}_{j\to v}u^{t-1}_{j\to v}Y_{j\to v}^{t-2} \tY^{s-2}_{j\to v}\big\} = 0$ for $t\ge s$ (step $(2)$).
\end{proof}

\begin{lemma}\label{lemma:T2}
Let $(v,\ov)\in E(T)$ and assume $Y_v^{t-1}\in \cG_{v}^{t-1,+}$, $\tY_{\ov}^{s}\in \cG_{\ov}^{s,+}$. If 
$t\ge s+2$, $s\ge 0$, then
\begin{align}
\E\big\{u^t_vY^{t-1}_v\tY_{\ov}^{s}\big\} = 0\, .
\end{align}
\end{lemma}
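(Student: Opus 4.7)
The plan is to build on Lemma~\ref{lemma:Neighbors-1}, which already produced the identity
\[
\E\big\{u^t_v Y_v^{t-1}\tY_{\ov}^{s}\big\} \;=\; \frac{1}{\sqrt{k}}\,\E\big\{A^{t-2}_{\ov\to v}\,u^{t-1}_{\ov\to v}\,Y^{t-1}_{v}\,\tY^{s}_{\ov}\big\}
\]
under the weaker hypothesis $t\ge s\vee 1$. Since here $t\ge s+2$ and $s\ge 0$, this hypothesis is certainly met, so it suffices to show the right-hand side vanishes.

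Next I would decompose the two vertex-indexed factors into directed-edge pieces, exactly as in the proof of Lemma~\ref{lemma:Neighbors-1}. By Lemma~\ref{lemma:Sigma}.$(e)$, we have $\cG^{t-1,+}_{v}=\sigma(\cG^{t-1,+}_{v\to\ov}\cup\cG^{t-2,+}_{\ov\to v})$, and analogously for $\ov$. Invoking Lemma~\ref{lemma:RemarkSigma}, it is enough to handle the product case
\[
Y^{t-1}_{v}=Y^{t-1}_{v\to\ov}\,Y^{t-2}_{\ov\to v},\qquad \tY^{s}_{\ov}=\tY^{s}_{\ov\to v}\,\tY^{s-1}_{v\to\ov},
\]
where each factor is measurable with respect to the directed-edge $\sigma$-algebra indicated by its subscripts.

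Now the remaining expectation splits along the two subtrees rooted at $v$ and $\ov$. The random variables $u^{t-1}_{\ov\to v}$, $A^{t-2}_{\ov\to v}$, $Y^{t-2}_{\ov\to v}$, $\tY^{s}_{\ov\to v}$ live in the $\ov\to v$ tower of $\sigma$-algebras, while $Y^{t-1}_{v\to\ov}$, $\tY^{s-1}_{v\to\ov}$ live in the $v\to\ov$ tower, and the two towers are independent by Lemma~\ref{lemma:Sigma}.$(c)$. Hence
\[
\E\big\{A^{t-2}_{\ov\to v}u^{t-1}_{\ov\to v}Y^{t-1}_{v}\tY^{s}_{\ov}\big\}
=\E\big\{u^{t-1}_{\ov\to v}\cdot A^{t-2}_{\ov\to v}Y^{t-2}_{\ov\to v}\tY^{s}_{\ov\to v}\big\}\cdot\E\big\{Y^{t-1}_{v\to\ov}\tY^{s-1}_{v\to\ov}\big\}.
\]

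This is where the hypothesis $t\ge s+2$ enters. Because $s\le t-2$, the factor $\tY^{s}_{\ov\to v}\in\cG^{s,+}_{\ov\to v}$ is also in $\cG^{t-2,+}_{\ov\to v}$, and so the combined random variable $A^{t-2}_{\ov\to v}Y^{t-2}_{\ov\to v}\tY^{s}_{\ov\to v}$ is measurable with respect to $\cG^{t-2,+}_{\ov\to v}\subseteq\cG^{t-2,+}_{\ov}$. Applying Lemma~\ref{lemma:Umoment} at vertex $\ov$ along the directed edge $\ov\to v$ with $\ell=t-2$ then gives $\E\{u^{t-1}_{\ov\to v}\cdot(A^{t-2}_{\ov\to v}Y^{t-2}_{\ov\to v}\tY^{s}_{\ov\to v})\}=0$, and the conclusion follows. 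The only subtle point is the bookkeeping that confirms $s\le t-2$ is exactly what is needed to place $\tY^{s}_{\ov\to v}$ inside the $\sigma$-algebra to which Lemma~\ref{lemma:Umoment} can be applied; everything else is a routine use of the independence structure on the tree.
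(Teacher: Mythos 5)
Your proof is correct and follows essentially the same route as the paper: reduce to the directed-edge identity of Lemma~\ref{lemma:Neighbors-1}, decompose $Y^{t-1}_v$ and $\tY^s_{\ov}$ into directed-edge factors via Lemma~\ref{lemma:Sigma}.$(e)$ and Lemma~\ref{lemma:RemarkSigma}, factor by independence (Lemma~\ref{lemma:Sigma}.$(c)$), and kill the $\ov\to v$ factor with Lemma~\ref{lemma:Umoment}, using $s\le t-2$ to place $A^{t-2}_{\ov\to v}Y^{t-2}_{\ov\to v}\tY^{s}_{\ov\to v}$ in $\cG^{t-2,+}_{\ov\to v}$. The only difference from the paper is the order in which the decomposition and Lemma~\ref{lemma:Neighbors-1} are invoked, which is immaterial.
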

\begin{proof}
By Lemma \ref{lemma:RemarkSigma} and Lemma \ref{lemma:Sigma}.$(e)$, we can assume, without loss of generality 
$Y^{t-1}_{v} = Y^{t-1}_{v\to\ov}Y^{t-2}_{\ov\to v}$ and $\tY^{s}_{\ov} = \tY^{s}_{\ov\to v}\tY^{s-1}_{v\to\ov}$.
By Lemma \ref{lemma:Neighbors-1}, we get
\begin{align*}
\E\big\{u^t_vY_v^{t-1}\tY^s_{\ov}\big\} &= \frac{1}{\sqrt{k}} \E\big\{A^{t-2}_{\ov\to v}u^{t-1}_{\ov\to v}Y^{t-1}_{v}\tY^s_{\ov}\big\}\\
&= \frac{1}{\sqrt{k}} \E\big\{A^{t-2}_{\ov\to v}u^{t-1}_{\ov\to v} Y^{t-1}_{v\to\ov}Y^{t-2}_{\ov\to v}  \tY^{s}_{\ov\to v}\tY^{s-1}_{v\to\ov}\big\}\\
&\stackrel{(1)}{=} \frac{1}{\sqrt{k}} \E\big\{A^{t-2}_{\ov\to v}u^{t-1}_{\ov\to v} Y^{t-2}_{\ov\to v}\tY^{s}_{\ov\to v}\big\} \E\big\{\tY^{s-1}_{v\to\ov}Y^{t-1}_{v\to\ov}\big\}\\
&\stackrel{(2)}{=}   0\, ,
\end{align*}
where in step $(1)$ we use Lemma \ref{lemma:Sigma}.$(c)$ and in step $(2)$ we used Lemma~\ref{lemma:Umoment},
together with the fact that $A^{t-2}_{\ov\to v}Y^{t-2}_{\ov\to v}\tY^{s}_{\ov\to v}\in \cG^{t-2}_{\ov\to v}$.
\end{proof}

\begin{lemma}\label{lemma:T1}
Let $(v,\ov)\in E(T)$ and assume (for some $t\ge 2$) $Y_v^{t-1}\in \cG_{v}^{t-1,+}$, $\tY_{\ov}^{t-2}\in \cG_{\ov}^{t-2,+}$. Then
\begin{align}
\E\big\{u^t_vY^{t-1}_vu_{\ov}^{t-1}\tY_{\ov}^{t-2}\big\} = \frac{\sqrt{k-1}}{k}\, \E\big\{A^{t-2}_{\ov\to v}(u^{t-1}_{\ov\to v})^2 Y^{t-1}_v\tY^{t-2}_{\ov}\big\}\, .
\end{align}
\end{lemma}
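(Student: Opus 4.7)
The plan is to follow the same template as Lemmas \ref{lemma:Neighbors-1} and \ref{lemma:T2}: first replace the vertex message $u^t_v$ by its contribution from the edge $\ov\to v$, then expand the remaining factor $u^{t-1}_{\ov}$ to isolate the ``squared'' term $(u^{t-1}_{\ov\to v})^2$. To start, I would view $u^{t-1}_{\ov}\tY^{t-2}_{\ov}$ as a single $\cG^{t-1,+}_{\ov}$-measurable test function (it is, by Lemma \ref{lemma:Sigma}$(a)$-$(b)$) and invoke Lemma \ref{lemma:Neighbors-1} with $s=t-1$; the hypothesis $t\ge s\vee 1$ is exactly our assumption $t\ge 2$. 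This yields
\[
\E\{u^t_vY^{t-1}_vu^{t-1}_{\ov}\tY^{t-2}_{\ov}\}=\frac{1}{\sqrt k}\E\{A^{t-2}_{\ov\to v}u^{t-1}_{\ov\to v}Y^{t-1}_vu^{t-1}_{\ov}\tY^{t-2}_{\ov}\}.
\]

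Next, I would use the vertex update \eqref{eq:VertexIteration} to split
\[
u^{t-1}_{\ov}=\frac{1}{\sqrt k}A^{t-3}_{v\to\ov}u^{t-2}_{v\to\ov}+\frac{\sqrt{k-1}}{\sqrt k}u^{t-1}_{\ov\to v},
\]
where the second summand collects the $k-1$ neighbors of $\ov$ other than $v$. Substituting, the contribution of the second summand produces exactly $\tfrac{\sqrt{k-1}}{k}\E\{A^{t-2}_{\ov\to v}(u^{t-1}_{\ov\to v})^2Y^{t-1}_v\tY^{t-2}_{\ov}\}$, i.e.\ the asserted right-hand side. So everything reduces to showing that the leftover term
\[
\frac{1}{k}\E\{A^{t-2}_{\ov\to v}u^{t-1}_{\ov\to v}\cdot A^{t-3}_{v\to\ov}u^{t-2}_{v\to\ov}\cdot Y^{t-1}_v\tY^{t-2}_{\ov}\}
\]
vanishes.

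To kill this term, I would follow the now-standard reduction via Lemma \ref{lemma:Sigma}$(e)$ and Lemma \ref{lemma:RemarkSigma}, assuming without loss of generality that $Y^{t-1}_v=Y^{t-1}_{v\to\ov}Y^{t-2}_{\ov\to v}$ and $\tY^{t-2}_{\ov}=\tY^{t-2}_{\ov\to v}\tY^{t-3}_{v\to\ov}$. Separating the $(\ov\to v)$-indexed and $(v\to\ov)$-indexed factors and invoking the independence of $\cG^{t-1,+}_{\ov\to v}$ and $\cG^{t-1,+}_{v\to\ov}$ given by Lemma \ref{lemma:Sigma}$(c)$, the expectation factors, and the $(\ov\to v)$ factor becomes $\E\{u^{t-1}_{\ov\to v}\cdot W\}$ with $W=A^{t-2}_{\ov\to v}Y^{t-2}_{\ov\to v}\tY^{t-2}_{\ov\to v}\in\cG^{t-2,+}_{\ov\to v}\subseteq\cG^{t-2,+}_{\ov}$. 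This vanishes by Lemma \ref{lemma:Umoment} applied at level $\ell=t-2$.

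I do not anticipate a serious obstacle: the structure of the argument is essentially dictated by the preceding three lemmas. The one point requiring attention is the index bookkeeping, where the hypothesis $t\ge 2$ is precisely what makes the indices $t-3$ and $t-2$ on the $(v\to\ov)$ side well-defined and, more importantly, ensures that the edge message $u^{t-1}_{\ov\to v}$ is one level \emph{newer} than all the other $(\ov\to v)$-side factors, so that the orthogonality statement of Lemma \ref{lemma:Umoment} applies cleanly.
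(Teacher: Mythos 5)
Your proposal is correct and follows essentially the same route as the paper's proof: apply Lemma \ref{lemma:Neighbors-1} with $u^{t-1}_{\ov}\tY^{t-2}_{\ov}\in\cG^{t-1,+}_{\ov}$, split $u^{t-1}_{\ov}$ via the vertex update to isolate the $(u^{t-1}_{\ov\to v})^2$ term, and kill the cross term by the factorization from Lemmas \ref{lemma:Sigma}$(c)$,$(e)$ and \ref{lemma:RemarkSigma} together with Lemma \ref{lemma:Umoment}. (Only a cosmetic slip: with $s=t-1$ the condition $t\ge s\vee1$ reads $t\ge 1$, while $t\ge 2$ is what keeps the indices $t-2$, $t-3$ meaningful.)
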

\begin{proof}
By Lemma \ref{lemma:Neighbors-1}, and using the fact that $u^{t-1}_{\ov}\tY^{t-2}_{\ov} \in \cG^{t-1,+}_{\ov}$, we have
\begin{align*}
\E\big\{u^t_vY^{t-1}_vu_{\ov}^{t-1}\tY_{\ov}^{t-2}\big\} = \frac{1}{\sqrt{k}}\, \E\big\{A^{t-2}_{\ov\to v}u^{t-1}_{\ov\to v} Y^{t-1}_vu^{t-1}_{\ov}\tY^{t-2}_{\ov}\big\}\, .
\end{align*}
On the other hand Eqs.~\eqref{eq:Iteration} and \eqref{eq:VertexIteration} imply, for any $j\in\di$,
\begin{equation}\label{eq:decomp_m}
u^{t-1}_{\ov} = \sqrt{\frac{k-1}{k}} \, u^{t-1}_{\ov\to v}+\frac{1}{\sqrt{k}} A^{t-3}_{v\to \ov}u^{t-2}_{v\to \ov}\, .
\end{equation}
Hence
\begin{align*}
\E\big\{u^t_vY^{t-1}_vu_{\ov}^{t-1}\tY_{\ov}^{t-2}\big\} =& \frac{\sqrt{k-1}}{k}\, \E\big\{A^{t-2}_{\ov\to v}(u^{t-1}_{\ov\to v})^2 Y^{t-1}_v\tY^{t-2}_{\ov}\big\}\\
&+ \frac{1}{k}\, \E\big\{A^{t-2}_{\ov\to v}u^{t-1}_{\ov\to v}A^{t-3}_{v\to \ov}u^{t-2}_{v\to\ov} Y^{t-1}_v\tY^{t-2}_{\ov}\big\}\, .
\end{align*}
The proof follows by showing that the second term on the right-hand side vanishes. To this end, by Lemmas \ref{lemma:Sigma} and \ref{lemma:RemarkSigma}
we can assume $Y^{t-1}_v = Y^{t-1}_{v\to \ov}Y^{t-2}_{\ov\to v}$ and $\tY^{t-2}_{\ov} = \tY^{t-3}_{v\to \ov}\tY^{t-2}_{\ov\to v}$. Using once more Lemma \ref{lemma:Sigma},
we get
\begin{align}
\E\big\{A^{t-2}_{\ov\to v}u^{t-1}_{\ov\to v}A^{t-3}_{v\to \ov}u^{t-2}_{v\to\ov} Y^{t-1}_v\tY^{t-2}_{\ov}\big\}= 
\E\big\{A^{t-2}_{\ov\to v}u^{t-1}_{\ov\to v} Y^{t-2}_{\ov\to v} \tY^{t-2}_{\ov\to v}  \big\} \E\big\{A^{t-3}_{v\to \ov}u^{t-2}_{v\to\ov} Y^{t-1}_{v\to \ov} \tY^{t-3}_{v\to \ov}\big\}  =0\, .
\end{align}
In the last step we use the fact that 
$A^{t-2}_{\ov\to v}Y^{t-2}_{\ov\to v} \tY^{t-2}_{\ov\to v}\in \cG^{t-2,+}_{\ov\to v}$, together with Lemma  \ref{lemma:Umoment}.
\end{proof}

\begin{lemma}\label{lemma:Diagonal}
Let $(v,\ov)\in E(T)$ and assume $Y_v^{\ell}\in \cG_{v}^{\ell,+}$, $\tY_{\ov}^{\ell}\in \cG_{\ov}^{\ell,+}$ for each $\ell\ge 0$. Then, for each $t\ge 1$,
\begin{align}
\E\big\{u_v^t u_{\ov}^t Y_v^{t-1} \tY_{\ov}^{t-1}\big\} = \E\big\{A_{v\to \ov}^{t-2} u_{v\to \ov}^{t-1}  A_{\ov\to v}^{t-2}  u_{\ov \to v}^{t-1} Y_v^{t-1}\tY_{\ov}^{t-1}\big\}\, .
%
\end{align}
\end{lemma}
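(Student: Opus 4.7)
The right-hand side expands, after distributing the product and cancelling the diagonal $Y_v^{t-1}\tY_{\ov}^{t-1}$ term, to reveal that the claim is equivalent to
\begin{equation*}
\E\{u_v^t u_{\ov}^t Y_v^{t-1}\tY_{\ov}^{t-2}\} + \E\{u_v^t u_{\ov}^t Y_v^{t-2}\tY_{\ov}^{t-1}\} = \E\{u_v^t u_{\ov}^t Y_v^{t-2}\tY_{\ov}^{t-2}\}\, .
\end{equation*}
Here I am reading the ``$\tY_v^{t-1}$'' in the statement as a typographical variant of $\tY_{\ov}^{t-1}$, so that both differences on the right-hand side telescope in time at their respective vertices. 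I would prove the stronger statement that each of the three expectations appearing above vanishes individually.

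To handle all three at once, I would establish the following auxiliary orthogonality claim: for any $r\le t-1$ and $s\le t-2$, and any $Y_v^r\in\cG_v^{r,+}$, $\tY_{\ov}^s\in\cG_{\ov}^{s,+}$, one has $\E\{u_v^t u_{\ov}^t Y_v^r\tY_{\ov}^s\}=0$, and symmetrically when $r\le t-2$ and $s\le t-1$. Each of the three index pairs $(t-1,t-2)$, $(t-2,t-1)$, $(t-2,t-2)$ lies in this regime, so the lemma follows immediately.

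For the auxiliary claim (WLOG $s\le t-2$), I would apply Lemma~\ref{lemma:Neighbors-1} by viewing the product $u_{\ov}^t\tY_{\ov}^s$ as a single $\cG_{\ov}^{t,+}$-measurable random variable, obtaining
\begin{equation*}
\E\{u_v^t u_{\ov}^t Y_v^r\tY_{\ov}^s\} = \tfrac{1}{\sqrt{k}}\,\E\{A_{\ov\to v}^{t-2}u_{\ov\to v}^{t-1}\,Y_v^r\,u_{\ov}^t\tY_{\ov}^s\}\, .
\end{equation*}
Next I would substitute the edge decomposition $u_{\ov}^t = \sqrt{(k-1)/k}\,u_{\ov\to v}^t + (1/\sqrt{k})\,A_{v\to\ov}^{t-2}u_{v\to\ov}^{t-1}$ used in the proof of Lemma~\ref{lemma:T1}, splitting the expectation into two summands. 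In each summand, Lemma~\ref{lemma:Sigma}.(e) combined with Lemma~\ref{lemma:RemarkSigma} lets me decompose $Y_v^r$ and $\tY_{\ov}^s$ into products of factors measurable with respect to the two disjoint subtree $\sigma$-algebras $\cG_{\ov\to v}^{\cdot,+}$ and $\cG_{v\to\ov}^{\cdot,+}$, and the independence in Lemma~\ref{lemma:Sigma}.(c) factorizes the full expectation over the two sides of the edge. In the summand containing $u_{\ov\to v}^t$, the $\ov\to v$-factor takes the form $\E\{u_{\ov\to v}^t\cdot X\}$ with $X\in\cG_{\ov\to v}^{t-1,+}$ and vanishes by the edge-message version of Lemma~\ref{lemma:Umoment}. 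In the summand containing $A_{v\to\ov}^{t-2}u_{v\to\ov}^{t-1}$, the hypothesis $s\le t-2$ together with $r\le t-1$ ensures that the $\ov\to v$-factor takes the form $\E\{u_{\ov\to v}^{t-1}\cdot X\}$ with $X\in\cG_{\ov\to v}^{t-2,+}$, and this too vanishes by Lemma~\ref{lemma:Umoment}.

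The main difficulty is the sigma-algebra bookkeeping in the last step: the level in $\cG_{\ov\to v}^{\cdot,+}$ of each decomposed factor has to be tracked carefully after applying Lemma~\ref{lemma:Sigma}.(e). The bound $s\le t-2$ is precisely what pins down this level at $t-2$ in the critical second summand, and this is also the reason why the full diagonal case $(r,s)=(t-1,t-1)$ cannot be killed by the same argument, consistent with the fact that it contributes the non-trivial second sum appearing in Proposition~\ref{propo:EnergyTree}.
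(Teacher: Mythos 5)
Your proposal is correct and follows essentially the same route as the paper: reduce the identity to showing that each cross term $\E\{u_v^t u_{\ov}^t Y_v^r \tY_{\ov}^s\}$ with $r\le t-1$, $s\le t-2$ (or symmetrically) vanishes, then combine Lemma~\ref{lemma:Neighbors-1}, the edge decomposition of $u_{\ov}^t$, the factorization across the edge via Lemmas~\ref{lemma:Sigma} and~\ref{lemma:RemarkSigma}, and Lemma~\ref{lemma:Umoment}, exactly as in the paper's argument (and your reading of $\tY_v^{t-1}$ as a typo for $\tY_{\ov}^{t-1}$ matches the paper's intent). The only cosmetic difference is that you kill the summand containing $u_{\ov\to v}^t$ by direct factorization plus Lemma~\ref{lemma:Umoment}, while the paper disposes of it through a second appeal to Lemma~\ref{lemma:Neighbors-1}; both are valid.
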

\begin{proof}
Since $u_{\ov}^t \tY_{\ov}^{t-2}\in \cG^{t,+}_{\ov}$, we apply Lemma \ref{lemma:Neighbors-1} to obtain
\begin{align*}
\E\big\{u_v^t u_{\ov}^t Y_v^{t-1} \tY_{\ov}^{t-1}\big\} &=
\frac{1}{\sqrt{k}}\,\E\big\{A_{\ov\to v}^{t-2}u_{\ov\to v}^{t-1} u_{\ov}^t Y_v^{t-1} \tY_{\ov}^{t-1}\big\}\nonumber\\
& = \frac{1}{k}\,\E\big\{A_{\ov\to v}^{t-2}u_{\ov\to v}^{t-1} A^{t-2}_{v\to \ov}u_{v\to \ov}^{t-1} Y_v^{t-1} \tY_{\ov}^{t-1}\big\}+
\frac{\sqrt{k-1}}{k}\,\E\big\{A_{\ov\to v}^{t-2}u_{\ov\to v}^{t-1} u_{\ov\to v}^t Y_v^{t-1} \tY_{\ov}^{t-1}\big\} \, ,
%
\end{align*}
where the last line follows by using Eq.~\eqref{eq:decomp_m} with a time index $t$ instead of $t-1$. 
It remains to show that the second term in the above display vanishes.
 By Lemma \ref{lemma:RemarkSigma} and Lemma~\ref{lemma:Sigma}.$(e)$, we can assume, without loss of generality,  $Y_v^{t-1} = Y_{v\to \ov}^{t-1}Y_{\ov\to v}^{t-2}$ and $\tY_{\ov}^{t-1}=\tY_{\ov\to v}^{t-1}\tY_{v\to \ov}^{t-2}$.
Substituting in the second term above and using independence (Lemma~\ref{lemma:Sigma}.$(c)$), we get
\begin{align*}
\E\big\{A_{\ov\to v}^{t-2}u_{\ov\to v}^{t-1} u_{\ov\to v}^t Y_v^{t-1} \tY_{\ov}^{t-1}\big\}
&= \E\big\{A_{\ov\to v}^{t-2}u_{\ov\to v}^{t-1} u_{\ov\to v}^t Y_{\ov\to v}^{t-2} \tY_{\ov \to v}^{t-1}\big\} \cdot 
\E\big\{Y_{v \to \ov}^{t-1} \tY_{v \to \ov}^{t-2}\big\} \, .
\end{align*}
Since $A^{t-2}_{\ov\to v}u^{t-1}_{\ov \to v} Y^{t-2}_{\ov \to v} \tY^{t-1}_{\ov \to v} \in \cG^{t-1,+}_{\ov\to v}$, the first expectation on the right-hand side is zero by Lemma~\ref{lemma:Umoment}.  
%
\end{proof}

We can now collect and apply the previous lemmas to obtain a statement about the correlation
achieved by the local algorithm of Section \ref{sec:AlgoDef-Discrete}.
\begin{proof}[Proof of Proposition~\ref{propo:EnergyTree}]
Using Eq.~\eqref{eq:zdef}, we get 
\begin{align*}
\E\{z^L_vz^L_{\ov}\} &= 2\sum_{\ell=1}^L\sum_{\tell=1}^{\ell-1} \E\{u_v^{\ell}u_{\ov}^{\tell}B_v^{\ell-1}B_{\ov}^{\tell-1}\} \, \delta + \sum_{\ell=1}^L\E\{u_v^{\ell}u_{\ov}^{\ell}B_v^{\ell-1}B_{\ov}^{\ell-1}\} \, \delta\\
& =  2\sum_{\ell=2}^L\E\{u_v^{\ell}u_{\ov}^{\ell-1}B_v^{\ell-1}B_{\ov}^{\ell-2}\}\, \delta + \sum_{\ell=1}^L\E\{u_v^{\ell}u_{\ov}^{\ell}B_v^{\ell-1}B_{\ov}^{\ell-1}\}\, \delta\, ,
\end{align*}
where the last equality follows from Lemma \ref{lemma:T2}, using the fact that $B_{i}^{\ell}\in \cG^{\ell,+}_{i}$.
We next apply Lemma \ref{lemma:T1} to the first sum, and Lemma \ref{lemma:Diagonal} to the second one, thus getting the desired claim.
\end{proof}

\section{A central limit theorem for large degree}
\label{sec:CLT}
We consider the situation where the degree $k$ is large. 
We will show in this case that the vector of messages $(u_{i \to j}^{\ell})_{1\le \ell \le L}$ for each directed edge 
$(i,j) \in \vE(T)$ is approximately Gaussian. In order to avoid inessential technicalities,
we make the following simplifying assumption. It will always hold in our applications thanks to Proposition~\ref{prop:phireg}.
\begin{assumption}[Boundedness of $F_{\ell}, H_{\ell}$.]\label{ass:Bounded}
We assume that there exists a constant $K<\infty$ such that, for all $\ell\ge 0$,
 $|F_{\ell}(\bu)|\vee|H_{\ell}(\bu)|\le K$ for all $\bu\in\R^{\ell+1}$.
\end{assumption}
This of course implies that the coefficients  $A_{i \to j}^{\ell}$ and $B_i^{\ell}$ 
are almost surely uniformly bounded:
\begin{equation}\label{eq:Abd}
\P\Big(\sup_{(i,j)\in \vE, \ell \ge 0} \big|A_{i \to j}^{\ell}\big| \le K\Big)= 1\, ,
\;\;\;\;\;\;
\P\Big(\sup_{i\in V, \ell \ge 0} \big|B_{i}^{\ell}\big| \le K\Big)= 1\, .
\end{equation}
We say that a function $\psi : \R^n \to \R$ is pseudo-Lipschitz if there is $\cL \ge 0$ such that for every $\bx,\by \in \R^n$, $|\psi(\bx) - \psi(\by)| \le \cL (1+ \|\bx\| + \|\by\|)\|\bx-\by\|$.
\begin{theorem}  \label{thm:clt}
Let $\ell \ge 0$ and $\psi : \R^{\ell+1} \to \R$ be a pseudo-Lipschitz function. Let 
$(U_0,\cdots,U_{\ell})\sim\normal(0,\id_{\ell+1})$. Then for any fixed $\ell$, the following hold
\begin{align}
\plim_{k\to\infty}\frac{1}{k} \sum_{i \in \dj} \psi\big(u_{i \to j}^{0},\cdots,u_{i \to j}^{\ell}\big) &= \E\big\{\psi(U_0,\cdots,U_\ell)\big\}\, , \\
\plim_{k\to\infty}\frac{1}{k} \sum_{i \in \dj} \psi\big(u_{i}^{0},\cdots,u_{i}^{\ell}\big) &= \E\big\{\psi(U_0,\cdots,U_\ell)\big\}\, .
\end{align}
Here, $\plim$ denotes the limit in probability.
\end{theorem}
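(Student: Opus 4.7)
The plan is to prove both assertions simultaneously by induction on $\ell$. For the base case $\ell=0$, the variables $u^0_{i\to j}=u^0_i$ with $i\in\dj$ are i.i.d.\ standard Gaussians, so the weak law of large numbers applies, and the same statement for $u^0_i$ is identical.

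For the inductive step, assume the theorem at level $\ell$. The heart of the argument is a triangular-array multivariate CLT applied to the recursion
\[
u^{\ell+1}_{i\to j}=\frac{1}{\sqrt{k-1}}\sum_{v\in\di\setminus j}A^{\ell-1}_{v\to i}\cdot u^{\ell}_{v\to i},
\]
which by Lemma~\ref{lemma:Sigma}.$(c)$ expresses $u^{\ell+1}_{i\to j}$ as a normalized sum of $k-1$ i.i.d.\ random variables, one per disjoint subtree rooted at $v$. Each summand has mean zero by Lemma~\ref{lemma:Umoment}, and the same lemma applied to $u^s_{i\to j}\in\cG^{\ell,+}_i$ (for $0\le s\le\ell$) gives $\E\{u^{\ell+1}_{i\to j} u^s_{i\to j}\}=0$, so the covariance matrix of $(u^0_{i\to j},\ldots,u^{\ell+1}_{i\to j})$ is diagonal at every finite $k$. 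The new diagonal entry equals
\[
\E\big\{F_{\ell-1}(u^0_{v\to i},\ldots,u^{\ell-1}_{v\to i})^2 \cdot (u^{\ell}_{v\to i})^2\big\},
\]
which, by the inductive weak convergence, the boundedness of $F_{\ell-1}$ from Assumption~\ref{ass:Bounded}, and a uniform-in-$k$ polynomial moment bound on $u^{\ell}$ established by a parallel Marcinkiewicz--Zygmund induction, converges to $\E\{F_{\ell-1}(U_0,\ldots,U_{\ell-1})^2\}\cdot\E\{U_\ell^2\}=1$ using the independence of the limiting Gaussians and the normalization~\eqref{eq:Normalization}. A multivariate Lindeberg CLT applied to the i.i.d.\ vectors $(A^{s-1}_{v\to i}u^{s}_{v\to i})_{0\le s\le\ell}$ indexed by $v\in\di\setminus j$ then yields $(u^{1}_{i\to j},\ldots,u^{\ell+1}_{i\to j})\Rightarrow\normal(0,\id_{\ell+1})$, and adjoining the independent coordinate $u^0_i$ upgrades this to $(u^0_{i\to j},\ldots,u^{\ell+1}_{i\to j})\Rightarrow\normal(0,\id_{\ell+2})$. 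The LLN at level $\ell+1$ then follows from Chebyshev's inequality: the random variables $\psi(u^0_{v\to j},\ldots,u^{\ell+1}_{v\to j})$ for $v\in\dj$ are i.i.d.\ with variance uniformly bounded in $k$ (by pseudo-Lipschitzness of $\psi$ and the moment bounds), and their common expectation converges to $\E\{\psi(U_0,\ldots,U_{\ell+1})\}$ by the CLT just proved together with uniform integrability.

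For the vertex-message version, the identity
\[
u^{\ell+1}_i=\sqrt{\tfrac{k-1}{k}}\,u^{\ell+1}_{i\to j}+\tfrac{1}{\sqrt{k}}\,A^{\ell-1}_{j\to i}u^{\ell}_{j\to i}
\]
implies $u^{\ell+1}_i-u^{\ell+1}_{i\to j}\to 0$ in $L^2$ uniformly in $i\in\dj$, which transfers both the marginal CLT and, after one further Chebyshev estimate to control the dependence of the summands via the common vertex $j$, the empirical-average claim from edge messages to vertex messages. I expect the main technical obstacle to be the systematic control of uniform-in-$k$ higher moments of $u^{\ell}_{i\to j}$ that is needed at every stage to convert weak convergence into convergence of expectations; this is not deep but requires an explicit inductive bound via Marcinkiewicz--Zygmund inequalities applied to the independent mean-zero contributions on each generation of the tree, exploiting the uniform boundedness $\|F_\ell\|_\infty\le K$.
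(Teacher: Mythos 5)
Your route differs genuinely from the paper's. The paper never invokes a central limit theorem over the $k-1$ children: its key observation (Lemma~\ref{lem:cond1}) is that, because the initial messages are Gaussian and the update is linear in them given the coefficients, $u^{\ell+1}_{i\to j}$ is \emph{exactly} Gaussian at every finite $k$ conditionally on $\cG^{\ell,-}_{i\to j}$, with a random conditional variance $(\tau^{\ell+1}_{i\to j})^2$ that concentrates at $1$ as $k\to\infty$ (Lemma~\ref{lem:Delta1}). The induction then proceeds by conditioning, replacing $\tau^{\ell+1}_{i\to j}$ by $1$, and integrating out the newest coordinate to reduce $\psi$ to the pseudo-Lipschitz function $\bar\psi(u^0,\dots,u^\ell)=\E_U\{\psi(u^0,\dots,u^\ell,U)\}$, to which the induction hypothesis applies. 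Your structure (mean-zero and diagonal covariance of the summand vectors via Lemma~\ref{lemma:Umoment}, triangular-array Lindeberg CLT, then Chebyshev over the i.i.d.\ edge messages into $j$, then the $L^2$ comparison $u^{\ell}_i\approx u^{\ell}_{i\to j}$ for the vertex version) is coherent, and the vertex step is fine — indeed an $L^1$/Markov bound on the averaged difference suffices, so the dependence through the common vertex $j$ never needs to be controlled by a variance computation.

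The genuine gap is in your identification of the limiting covariance. You need $\E\{(A^{\ell-1}_{v\to i})^2(u^{\ell}_{v\to i})^2\}\to 1$, and you argue this from ``the inductive weak convergence, the boundedness of $F_{\ell-1}$'' plus the normalization~\eqref{eq:Normalization}. But the induction hypothesis (and hence the weak convergence it yields) only controls pseudo-Lipschitz test functions, while Assumption~\ref{ass:Bounded} allows $F_{\ell-1}$ to be an arbitrary bounded \emph{measurable} function; weak convergence does not let you pass $F_{\ell-1}(\cdot)^2(u^\ell)^2$ to the limit, nor transfer the finite-$k$ identity $\E\{F_{\ell-1}(u^0_{i\to j},\dots,u^{\ell-1}_{i\to j})^2\}=1$ to the limiting Gaussian law, unless you add a continuity hypothesis on $F_{\ell-1}$ that the theorem does not assume (it does hold in the paper's eventual application, where $a(t,\cdot)$ is Lipschitz, but not in the stated generality). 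The clean repair is precisely the conditional-Gaussian structure: $\E\{(A^{\ell-1}_{v\to i})^2(u^{\ell}_{v\to i})^2\}=\E\{(A^{\ell-1}_{v\to i})^2(\tau^{\ell}_{v\to i})^2\}$, which tends to $\E\{(A^{\ell-1}_{v\to i})^2\}=1$ by~\eqref{eq:Normalization} and Lemma~\ref{lem:Delta1}, with no continuity needed — but once you have this conditional Gaussianity, the external CLT machinery (and the Marcinkiewicz--Zygmund moment bookkeeping, which conditional Gaussianity plus $|A|\le K$ already provides) becomes unnecessary, and you are back to the paper's shorter argument.
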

The next two lemmas will be useful in the proof of the above theorem.  
\begin{lemma}\label{lem:cond1}
For $(i,j) \in \vE(T)$ and $\ell \ge 0$,
\begin{equation}
u^{\ell}_{i\to j} \,\big|\,  \cG_{i \to j}^{\ell-1,-} \, \stackrel{\rmd}{=}\,  \normal\big(0, \big(\tau_{i\to j}^{\ell}\big)^2\big) \, ,
\end{equation}
where $(\tau_{i\to j}^{\ell})$ are defined by the recursion:
\begin{align}\label{eq:update_tau}
\big(\tau_{i\to j}^{\ell+1}\big)^2 = \frac{1}{k-1} \sum_{v \in \di \setminus j} \big(A_{v \to i}^{\ell-1}\big)^2\, \big(\tau_{v\to i}^{\ell}\big)^2 \, ,~~~\mbox{and}~~~
\tau_{i\to j}^{0} = 1 \, .
\end{align}
\end{lemma}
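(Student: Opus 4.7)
The proof proceeds by induction on $\ell$. The base case $\ell=0$ is immediate since $u^{0}_{i\to j}=u^0_i\sim\normal(0,1)$, the $\sigma$-algebra $\cG^{-1,-}_{i\to j}$ is trivial, and $\tau^{0}_{i\to j}=1$ by definition. For the inductive step I will show that if the statement holds at level $\ell$ on every directed edge of $T$, with $(\tau^{\ell}_{v\to i})^{2}$ measurable w.r.t.\ $\cG^{\ell-1,-}_{v\to i}$, then it holds at level $\ell+1$ with variance given by~\eqref{eq:update_tau}.

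Starting from
\[
u^{\ell+1}_{i\to j}=\frac{1}{\sqrt{k-1}}\sum_{v\in\di\setminus j}A^{\ell-1}_{v\to i}\,u^{\ell}_{v\to i},
\]
I decompose the conditioning using Lemma~\ref{lemma:Sigma}.(e): $\cG^{\ell,-}_{i\to j}=\sigma\bigl(\cup_{v\in\di\setminus j}\cG^{\ell-1,+}_{v\to i}\bigr)$, where the sub-$\sigma$-algebras $\cG^{\ell-1,+}_{v\to i}$ are mutually independent by Lemma~\ref{lemma:Sigma}.(c). Each coefficient $A^{\ell-1}_{v\to i}\in\cF^{\ell-1,+}_{v\to i}\subseteq\cG^{\ell-1,+}_{v\to i}$ is therefore measurable with respect to the conditioning, hence acts as a constant. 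The key sub-claim is that $u^{\ell}_{v\to i}\mid\cG^{\ell-1,+}_{v\to i}\sim\normal\bigl(0,(\tau^{\ell}_{v\to i})^{2}\bigr)$: by Lemma~\ref{lemma:Sigma}.(a), $u^{\ell}_{v\to i}\in\cG^{\ell,-}_{v\to i}$ does not involve $u^0_v$, so $u^0_v$ is independent of the pair $(u^{\ell}_{v\to i},\cG^{\ell-1,-}_{v\to i})$; enlarging the conditioning from $\cG^{\ell-1,-}_{v\to i}$ to $\cG^{\ell-1,+}_{v\to i}=\cG^{\ell-1,-}_{v\to i}\vee\sigma(u^0_v)$ thus leaves the conditional distribution unchanged, and the inductive hypothesis applies.

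Assembling these pieces, independence of the $\cG^{\ell-1,+}_{v\to i}$ across $v$ implies that the messages $\{u^{\ell}_{v\to i}\}_{v\in\di\setminus j}$ are conditionally independent Gaussians given $\cG^{\ell,-}_{i\to j}$, and hence $u^{\ell+1}_{i\to j}\mid\cG^{\ell,-}_{i\to j}\sim\normal\bigl(0,(k-1)^{-1}\sum_{v}(A^{\ell-1}_{v\to i})^{2}(\tau^{\ell}_{v\to i})^{2}\bigr)$, which is exactly~\eqref{eq:update_tau}. The chain of inclusions $\cG^{\ell-1,-}_{v\to i}\subseteq\cG^{\ell-1,+}_{v\to i}\subseteq\cG^{\ell,-}_{i\to j}$ applied to $(\tau^{\ell}_{v\to i})^{2}$ gives the required measurability of $(\tau^{\ell+1}_{i\to j})^{2}$, closing the induction. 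The main obstacle throughout is the bookkeeping of the minus/plus versions of the various $\sigma$-algebras so that conditional Gaussianity of each incoming message and their mutual conditional independence can be secured simultaneously; this is precisely what the structural identities in Lemma~\ref{lemma:Sigma} are designed to enable.
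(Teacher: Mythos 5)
Your proof is correct and follows essentially the same route as the paper's: induction on $\ell$, writing $u^{\ell+1}_{i\to j}$ as a linear combination of the incoming messages whose coefficients $A^{\ell-1}_{v\to i}$ are measurable with respect to the conditioning, and using independence of the disjoint subtrees to get conditional independence and conditional Gaussianity of the $u^{\ell}_{v\to i}$, yielding the variance recursion. If anything, you are slightly more explicit than the paper on two points it leaves implicit, namely why enlarging the conditioning from $\cG^{\ell-1,-}_{v\to i}$ to $\cG^{\ell,-}_{i\to j}$ does not change the conditional law, and why $(\tau^{\ell+1}_{i\to j})^2$ is measurable with respect to the conditioning $\sigma$-algebra.
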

\begin{proof}
We proceed by induction, the base case $\ell=0$ being clear. Suppose the statement holds for all $(i,j) \in \vE$ 
and iterations up to $\ell$. We have
\[u_{i \to j}^{\ell+1} = \frac{1}{\sqrt{k-1}}\sum_{ v \in \di \setminus j} A_{v\to i}^{\ell-1}\, u_{v \to i}^{\ell}.\] 
We observe that for all $v \in \di \setminus j$, $A_{v\to i}^{\ell-1}$ is $\cG_{i \to j}^{\ell,-}$--measurable and $u_{v \to i}^{\ell} \big| \cG_{i \to j}^{\ell,-} \stackrel{\rmd}{=} u_{v \to i}^{\ell} \big| \cG_{v \to i}^{\ell-1,-} \stackrel{\rmd}{=}  \normal(0,\big(\tau_{v\to i}^{\ell}\big)^2)$, where the last inequality in law is by induction. Since $(u_{v \to i}^{\ell})_{v \in \di}$ are independent, we deduce that $u^{\ell+1}_{i\to j}$ is Gaussian conditional on $\cG_{i \to j}^{\ell,+}$, with mean and variance 
\[\E\big\{u^{\ell+1}_{i\to j} \,|\,  \cG_{i \to j}^{\ell,+} \big\} = 0\, ,~~~~ \E\big\{(u^{\ell+1}_{i\to j})^2  \,|\,  \cG_{i \to j}^{\ell,+} \big\} = \frac{1}{k-1} \sum_{v \in \di \setminus j } \big(A_{v \to i}^{\ell-1}\big)^2 \big(\tau_{v\to i}^{\ell}\big)^2\,.\]
This completes the induction argument. 
\end{proof}

\begin{lemma}\label{lem:moment}
For $(i,j) \in \vE(T)$ and $\ell \ge 0$,
\[\E\big\{ \big(u_{i \to j}^{\ell}\big)^2\big\} \le K^{2\ell}.\]
\end{lemma}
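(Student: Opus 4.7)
The plan is to prove this by induction on $\ell$, leveraging Lemma~\ref{lem:cond1} to rewrite the second moment in terms of the conditional variance $(\tau_{i\to j}^\ell)^2$, and then using the almost-sure uniform bound on $A_{v\to i}^{\ell-1}$ guaranteed by Assumption~\ref{ass:Bounded} (specifically equation~\eqref{eq:Abd}).

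First, observe that by Lemma~\ref{lem:cond1} and the tower property,
\begin{equation*}
\E\big\{(u_{i\to j}^{\ell})^2\big\} = \E\big\{\E\{(u_{i\to j}^{\ell})^2\mid \cG_{i\to j}^{\ell-1,-}\}\big\} = \E\big\{(\tau_{i\to j}^{\ell})^2\big\}\, .
\end{equation*}
So it suffices to show $\E\{(\tau_{i\to j}^{\ell})^2\}\le K^{2\ell}$. The base case $\ell=0$ is immediate from $\tau_{i\to j}^0 = 1$.

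For the inductive step, applying the recursion~\eqref{eq:update_tau} together with the almost-sure bound $|A_{v\to i}^{\ell-1}|\le K$ from~\eqref{eq:Abd} gives
\begin{equation*}
(\tau_{i\to j}^{\ell+1})^2 \le \frac{K^2}{k-1}\sum_{v\in\di\setminus j}(\tau_{v\to i}^{\ell})^2\, .
\end{equation*}
Taking expectations and using the fact that, on the infinite $k$-regular tree, the law of $(\tau_{v\to i}^{\ell})^2$ does not depend on the directed edge (since the full recursive distributional structure in Section~\ref{sec:AnalysisTree} is invariant under automorphisms of $T$), we obtain
\begin{equation*}
\E\big\{(\tau_{i\to j}^{\ell+1})^2\big\} \le K^2\, \E\big\{(\tau_{v\to i}^{\ell})^2\big\} \le K^2\cdot K^{2\ell} = K^{2(\ell+1)}\, ,
\end{equation*}
where the last inequality uses the inductive hypothesis. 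This completes the induction and yields the claim.

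There is no real obstacle here; the only point worth flagging is the use of edge-homogeneity of the law of $\tau_{v\to i}^\ell$, which is a structural feature of the message passing recursion on the regular tree that has already been implicitly exploited throughout Section~\ref{sec:AnalysisTree} (see the discussion preceding~\eqref{eq:RDE}).
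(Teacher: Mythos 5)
Your proof is correct, and it is non-circular: Lemma~\ref{lem:cond1} is established independently of Lemma~\ref{lem:moment}, so you may use it here. The route differs mildly from the paper's. The paper proves the bound in one step by expanding $\E\{(u^{\ell+1}_{i\to j})^2\}$ directly: the summands $A^{\ell-1}_{v\to i}u^{\ell}_{v\to i}$ for distinct $v\in\di\setminus j$ are independent with zero mean, so the cross terms vanish, and then $|A^{\ell-1}_{v\to i}|\le K$ together with edge-homogeneity of the tree gives $\E\{(u^{\ell+1}_{i\to j})^2\}\le K^2\,\E\{(u^{\ell}_{i\to j})^2\}$, from which the claim follows since $\E\{(u^{0}_{i\to j})^2\}=1$. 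You instead route through the conditional Gaussian representation: by Lemma~\ref{lem:cond1} and the tower property (unproblematic here since the integrand is nonnegative, and $(\tau^{\ell}_{i\to j})^2$ is measurable with respect to the conditioning $\sigma$-algebra), $\E\{(u^{\ell}_{i\to j})^2\}=\E\{(\tau^{\ell}_{i\to j})^2\}$, and then you run the same $K^2$-per-step induction on the recursion \eqref{eq:update_tau}, again using \eqref{eq:Abd} and the edge-invariance of the law. The two arguments are equivalent in substance — the conditional variance in Lemma~\ref{lem:cond1} encodes exactly the independence/zero-mean structure the paper exploits directly — so what the paper's version buys is self-containedness (it needs only pairwise independence and boundedness), while yours buys a slightly cleaner bookkeeping once Lemma~\ref{lem:cond1} is in hand, at the cost of invoking the full conditional-Gaussian statement where a second-moment identity suffices.
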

\begin{proof}
Since $A_{v \to i}^{\ell-1} u_{v \to i}^{\ell}$ and $A_{v' \to i}^{\ell-1} u_{v' \to i}^{\ell}$ are independent for $v \neq v'$ and have zero mean, we have 
\begin{align*}
\E\big\{ \big(u_{i \to j}^{\ell+1}\big)^2\big\} &= \frac{1}{k-1}\sum_{ v \in \partial i \setminus j} \E\big\{ \big(A_{v \to i}^{\ell-1} u_{v \to i}^{\ell}\big)^2 \big\} \\
&\le K^2 \frac{1}{k-1}\sum_{ v \in \partial i \setminus j}  \E\big\{  (u_{v \to i}^{\ell})^2 \big\} \\
&= K^2\,\E\big\{  (u_{i \to j}^{\ell})^2 \big\} \, ,
\end{align*}
where the second line follows from Assumption~\ref{ass:Bounded}, and the last line follows by transitivity of the tree. Since $\E[  (u_{i \to j}^{0})^2 ] = 1$, we obtain the desired result. 
\end{proof}
\begin{lemma}\label{lem:Delta1}
For $\ell \ge 0$ there exists a constant $C = C(\ell)<\infty$ such that 
\begin{equation}
\E\Big\{\Big(\big(\tau_{i\to j}^{\ell}\big)^2- 1\Big)^2\Big\} \le \frac{C(\ell)}{k-1} \, .
\end{equation}
\end{lemma}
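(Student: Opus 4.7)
The plan is to induct on $\ell$. The base case $\ell=0$ is immediate since $\tau_{i\to j}^0\equiv 1$ (and in fact $\tau_{i\to j}^1\equiv 1$ as well, because $A^{-1}\equiv 1$). For the induction step, I would introduce
\[
X_v := (A_{v\to i}^{\ell-1})^2\,(\tau_{v\to i}^{\ell})^2\qquad\text{for } v\in\di\setminus j,
\]
so that the recursion \eqref{eq:update_tau} reads $(\tau_{i\to j}^{\ell+1})^2 = \frac{1}{k-1}\sum_{v\in\di\setminus j} X_v$.

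The key structural observation is that each $X_v$ is measurable with respect to $\cG_{v\to i}^{\ell,+}$: indeed $A_{v\to i}^{\ell-1}\in\cF_{v\to i}^{\ell-1,+}\subseteq\cG_{v\to i}^{\ell-1,+}$ by Lemma~\ref{lemma:Sigma}(b), and $(\tau_{v\to i}^{\ell})^2$ is a deterministic function of $A$-values living in the subtree $\Ball_{v\to i}$, obtained by iterating \eqref{eq:update_tau}. Since the subtrees $\Ball_{v\to i}$ for distinct $v\in\di\setminus j$ are pairwise disjoint, Lemma~\ref{lemma:Sigma}(c) implies that $\{X_v\}_{v\in\di\setminus j}$ are mutually independent, and by the transitivity of $T$ they are also identically distributed.

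I would then split the $L^2$ error into variance plus squared bias,
\[
\E\big\{((\tau_{i\to j}^{\ell+1})^2-1)^2\big\} = \var\big((\tau_{i\to j}^{\ell+1})^2\big) + \big(\E[(\tau_{i\to j}^{\ell+1})^2]-1\big)^2,
\]
and handle each term separately. For the variance, independence gives $\var((\tau_{i\to j}^{\ell+1})^2)=\var(X_v)/(k-1)$; Assumption~\ref{ass:Bounded} together with the deterministic bound $(\tau_{v\to i}^{\ell})^2\le K^{2\ell}$ (itself immediate from \eqref{eq:update_tau} by induction) yields $\var(X_v)\le\E[X_v^2]\le K^{4\ell+4}$. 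For the bias, the normalization $\E[(A_{v\to i}^{\ell-1})^2]=1$ from Assumption~\ref{ass:Discrete} lets me write
\[
\E[X_v]-1 = \E\big[(A_{v\to i}^{\ell-1})^2\big((\tau_{v\to i}^{\ell})^2-1\big)\big],
\]
and then $|A_{v\to i}^{\ell-1}|\le K$ combined with Cauchy--Schwarz and the inductive hypothesis gives $|\E[X_v]-1|\le K^2\sqrt{C(\ell)/(k-1)}$, hence squared bias at most $K^4 C(\ell)/(k-1)$. Summing the two contributions yields the claim with $C(\ell+1):=K^{4\ell+4}+K^4 C(\ell)$.

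The only delicate point is the independence of the $X_v$, which is essentially handed to us by the $\sigma$-algebra bookkeeping of Lemma~\ref{lemma:Sigma}; one should also be careful \emph{not} to attempt to factorize $\E[(A_{v\to i}^{\ell-1})^2(\tau_{v\to i}^{\ell})^2]$ as a product, since $A_{v\to i}^{\ell-1}$ and $\tau_{v\to i}^{\ell}$ are correlated functions of the same subtree---writing the bias in the form above, which isolates the fluctuation $(\tau_{v\to i}^{\ell})^2-1$, is what makes the inductive bound close up.
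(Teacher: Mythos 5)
Your proof is correct, and at its core it is the same argument as the paper's: an induction on $\ell$ for $\Delta^\ell=\E\{((\tau_{i\to j}^{\ell})^2-1)^2\}$ driven by the recursion \eqref{eq:update_tau}, the a.s.\ bound $|A^{\ell}_{v\to i}|\le K$, the normalization \eqref{eq:Normalization}, and independence of quantities attached to the disjoint subtrees hanging off $i$ (your measurability bookkeeping via Lemma~\ref{lemma:Sigma}(b)--(c) is exactly the right justification, and your warning against factorizing $\E[(A^{\ell-1}_{v\to i})^2(\tau^{\ell}_{v\to i})^2]$ is well taken). The only difference is how the square is expanded: the paper splits $(\tau^{\ell+1}_{i\to j})^2-1$ into $\frac{1}{k-1}\sum_v (A^{\ell-1}_{v\to i})^2((\tau^{\ell}_{v\to i})^2-1)$ plus $\frac{1}{k-1}\sum_v ((A^{\ell-1}_{v\to i})^2-1)$, bounding the first via $(a+b)^2\le 2a^2+2b^2$ and Jensen (giving $2K^4\Delta^\ell$) and the second via independence (giving $2(K^4+1)/(k-1)$), whereas you use the exact bias--variance identity for the i.i.d.\ sum $\frac{1}{k-1}\sum_v X_v$: the variance picks up the $1/(k-1)$ factor directly from independence, and the bias is controlled by Cauchy--Schwarz together with the inductive hypothesis, giving $C(\ell+1)=K^{4\ell+4}+K^4C(\ell)$. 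Both routes yield a constant $C(\ell)$ growing exponentially in $\ell$, which is all the lemma requires, so the distinction is cosmetic; if anything, your version avoids the crude factor of $2$ at the price of invoking the deterministic bound $(\tau^{\ell}_{v\to i})^2\le K^{2\ell}$ (valid since $K\ge 1$ by \eqref{eq:Normalization}).
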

\begin{proof}
We define
\begin{equation*}
\Delta^{\ell} := \E\Big\{\Big(\big(\tau_{i\to j}^{\ell}\big)^2- 1\Big)^2\Big\} \, .
\end{equation*}
Using the relation~\eqref{eq:update_tau}, we have the bound
 \begin{align*}
 \Delta^{\ell+1} &\le 2 \E\Big\{\Big(\frac{1}{k-1} \sum_{v \in \di \setminus j } \big(A_{v \to i}^{\ell-1}\big)^2 \big((\tau^{\ell}_{v\to i})^2  -1\big)\Big)^2\Big\}\\
 &~+2 \E\Big\{\Big(\frac{1}{k-1} \sum_{v \in \di \setminus j } \big(A_{v \to i}^{\ell-1}\big)^2 - 1\Big)^2\Big\} \, .
 \end{align*}
We bound the first term via Jensen's inequality and the fact that $|A_{v \to i}^{\ell-1}| \le K$ almost surely. We expand the square in the second term and use independence together with the normalization condition~\eqref{eq:Normalization} to obtain the recursion 
\begin{align*}
 \Delta^{\ell+1} &\le  \frac{2K^4}{k-1} \sum_{v \in \di \setminus j } \E\Big\{\Big((\tau^{\ell}_{v\to i})^2-1\Big)^2\Big\}
 + \frac{2}{(k-1)^2}  \sum_{v \in \di \setminus j } \E\Big\{\Big(\big(A_{v \to i}^{\ell-1}\big)^2  - 1\Big)^2\Big\}\\
 &\le 2K^4 \Delta^{\ell} + \frac{2 (K^4 +1)}{k-1} \, .
 \end{align*}
Since $\Delta^0 = \big(\E\big\{\big(u^{0}_{i}\big)^2\big\} - 1\big)^2=0$, it follows that $\Delta^\ell \le C(\ell) / (k-1)$. 
\end{proof}

\begin{proof}[Proof of Theorem~\ref{thm:clt}]   
We only prove convergence of $(u^0_{i \to j},\cdots,u^\ell_{i \to j})$. 
Convergence of $(u^0_{i},\cdots,u^\ell_{i})$ can be shown similarly. We proceed by induction over $\ell$. The base case $\ell=0$ follows from the law of large numbers, as $(u^{0}_{i \to j})_{i \in \dj}$ are i.i.d.\ with zero mean and unit variance. We suppose that the claim is true up to $\ell$. Let $\psi : \R^{\ell+2} \to \R$. Since $(u^{s}_{i \to j})_{0 \le s \le \ell}$ are $\cG_{i \to j}^{\ell,-}$--measurable,  Lemma~\ref{lem:cond1} implies 
\begin{equation}\label{eq:condlaw}
\frac{1}{k} \sum_{i \in \dj} \psi\big(u_{i \to j}^{0},\cdots,u_{i \to j}^{\ell+1}\big) \, \Big|\, \cG_{i \to j}^{\ell,-}  ~\stackrel{\rmd}{=}~ \frac{1}{k} \sum_{i \in \dj} \psi\big(u_{i \to j}^{0},\cdots,u_{i \to j}^{\ell}, \tau_{i \to j}^{\ell+1} \, U_i\big) \, ,
\end{equation}
where $(U_i)_{i \in \dj}$ are mutually independent $\normal(0,1)$ r.v.'s which are independent of everything else.
Using Lemma~\ref{lem:Delta1} combined with the facts that $\psi$ is pseudo-Lipschitz and
 $(u^{s}_{i \to j})_{0 \le s \le \ell}$ have moments bounded uniformly in $k$ (because the coefficients $(A_{i\to j}^{s})$
 are almost surely uniformly bounded), we have 
\begin{align}\label{eq:bdabs}
\E\Big\{ \Big|\psi\big(u_{i \to j}^{0},\cdots,u_{i \to j}^{\ell}, \tau_{i \to j}^{\ell+1} \, U_i\big) &- \psi\big(u_{i \to j}^{0},\cdots,u_{i \to j}^{\ell}, U_i\big)\Big|\Big\} 
&\le C(\ell) \Big(\E\big\{\big(\tau_{i \to j}^{\ell+1} -1\big)^2\big\}^{1/2} + \E\big\{\big(\tau_{i \to j}^{\ell+1} -1\big)^2\big\}  \Big)\nonumber\\
&\le \frac{C(\ell)}{\sqrt{k-1}} \, .
\end{align}
Next, we define the function
\[\bar{\psi}(u^0,\cdots,u^\ell) := \E_U\big\{\psi(u^0,\cdots,u^\ell, U)\big\},~~~U \sim \normal(0,1).\]
By independence of the tuples $\big(u_{i \to j}^{0},\cdots,u_{i \to j}^{\ell}, U_i\big)_{i \in \dj}$ we have
\begin{equation}\label{eq:bdvar}
\E\Big\{\Big(\frac{1}{k} \sum_{i \in \dj}  \big\{ \psi\big(u_{i \to j}^{0},\cdots,u_{i \to j}^{\ell},  U_i\big) - \bar{\psi}\big(u_{i \to j}^{0},\cdots,u_{i \to j}^{\ell}\big) \big\}\Big)^2\Big\} = \frac{\sigma_{\psi}^2}{k}\, ,
\end{equation}
where 
\[\sigma_{\psi}^2 := \E\big\{\var\Big(\psi\big(u_{i \to j}^{0},\cdots,u_{i \to j}^{\ell}, U_i\big) \, \big|\, (u^{s}_{i \to j})_{0 \le s \le \ell}\Big)\big\}  \, .\]
It follows from Lemma~\ref{lem:moment} and $\psi$ being pseudo-Lipschitz that $\sigma_{\psi}^2$ is bounded uniformly in $k$. 
Combining~\eqref{eq:condlaw},~\eqref{eq:bdabs} and~\eqref{eq:bdvar} via Chebyshev's inequality we obtain
\[\Big|\frac{1}{k} \sum_{i \in \dj}  \big\{ \psi\big(u_{i \to j}^{0},\cdots,u_{i \to j}^{\ell+1}\big) -  \bar{\psi}\big(u_{i \to j}^{0},\cdots,u_{i \to j}^{\ell}\big) \big\} \Big| \xrightarrow[k \to \infty]{p} 0 \, .\]  
Using the induction hypothesis with the function $\bar{\psi}$ allows us to conclude the argument.    %
\end{proof}

\section{The non-linearities and continuum limit}
\label{sec:ContTime}

In this section we propose a choice of the nonlinearities $F_{\ell}, H_{\ell}$
that is motivated by the algorithm introduced for the SK model in
\cite{montanari2021optimization}. We will show that, by rounding the resulting 
estimate $z_i^L$, we obtain an algorithm that verifies the claim of Theorem \ref{thm:main}.
%

\subsection{General construction of the non-linearities and value on the tree}
\label{sec:non-lin}

We begin by constructing a general class of non-linearities  $F_{\ell}, H_{\ell}$,
which depends on two functions $a,b:[0,1)\to \R$. 
The construction proceeds by defining the sequences $(x^\ell_{i \to j})_{ (i,j)\in \vE(T) , \ell \ge 0}$ 
and $(x^\ell_{i})_{ i\in V(T) , \ell \ge 0}$ recursively as 
\begin{align}
x^{\ell+1}_{i \to j} &= x^\ell_{i \to j} + b(\delta\ell , x^\ell_{i \to j}) \delta + \sqrt{\delta} \, u^{\ell+1}_{i \to j}\, ,~~~~ x^0_{i \to j} = \sqrt{\delta} \, u^0_{i} \, ,\label{eq:x_itoj}\\
x^{\ell+1}_{i} &= x^\ell_{i} + b(\delta\ell , x^\ell_{i}) \delta + \sqrt{\delta} \, u^{\ell+1}_{i}\, ,~~~~ x^0_{i} = \sqrt{\delta}\, u^0_{i} \, ,\label{eq:x_i}
\end{align}
where $(u^\ell_{i \to j})_{ (i,j)\in \vE(T) , \ell \ge 0}$ and $(u^\ell_{i})_{ i\in V(T) , \ell \ge 0}$
are defined by the general message passing iteration of Eqs.~\eqref{eq:Iteration}, \eqref{eq:VertexIteration}
with
\begin{align}
A^{\ell}_{i \to j} = a(\delta \ell , x^{\ell}_{i \to j}) \, ,
~~~\mbox{and}~~~
B^{\ell}_{i} =  a(\delta \ell , x^{\ell}_{i})  \, .
\end{align}
In other words we defined
\begin{align}
F_{\ell}(u^0_{i \to j} ,\cdots, u^\ell_{i \to j}) := a(\delta \ell , x^{\ell}_{i \to j}) \, ,
~~~\mbox{and}~~~
H_{\ell}(u^0_{i} ,\cdots, u^\ell_{i}):= a(\delta \ell , x^{\ell}_{i})  \, ,
\end{align}
where the $x_{i\to j}^{\ell}$, and $x_i^{\ell}$ are given by the recursions 
\eqref{eq:x_itoj}, \eqref{eq:x_i}.

We now analyze the expected value achieved on the tree $T$, 
$\frac{k}{2} \E\big\{z^L_o z^L_v\big\}$
where $(o,v)$ is an edge of the tree, with the above choice of non-linearities. 
\begin{theorem} \label{thm:main2}
Assume the functions $a$, $b$ satisfy the following conditions for some constants 
$\eps>0$, $K, M<\infty$:
\begin{enumerate}
\item $\sup_{x\in\R,t\le 1-\eps}|a(t,x)|\le K$.
\item  $|b(t,x)-b(t,x')|\le M|x-x'|$ for all $t\in [0,1-\eps]$, $x,x'\in\R$.
\end{enumerate}
Let $\eps>0$, $\delta \in [0,1]$ and $L \ge 0$ such that $L \delta \le 1-\eps$. 
Then there exists $C = C(M,K,\delta,\eps) < \infty$  such that for all $k \ge 2$, 
\begin{align}  
\Big| k \E\big\{z_{o}^L z_{v}^L\big\} -  2\sqrt{k-1}  \E \Big\{ \sum_{\ell=2}^L a(\delta \ell , x_o^\ell) \, 
\delta \Big\} \Big| \le  C(M,K,\delta,\eps) \, .
\end{align}
\end{theorem}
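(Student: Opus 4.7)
The plan is to invoke Proposition~\ref{propo:EnergyTree} applied to the edge $(o,v)\in E(T)$ with the concrete choices $A^\ell_{i\to j}=a(\delta\ell,x^\ell_{i\to j})$ and $B^\ell_i=a(\delta\ell,x^\ell_i)$. This gives the decomposition
\begin{align*}
k\,\E\{z_o^L z_v^L\}=2\sqrt{k-1}\,S_1+k\,S_2,
\end{align*}
where $S_1=\sum_{\ell=2}^L \E\{A^{\ell-2}_{v\to o}(u^{\ell-1}_{v\to o})^2 B^{\ell-1}_o B^{\ell-2}_v\}\delta$ is the main martingale-type contribution and $S_2=\sum_{\ell=1}^L \E\{u^\ell_o u^\ell_v(B^{\ell-1}_o-B^{\ell-2}_o)(B^{\ell-1}_v-B^{\ell-2}_v)\}\delta$ is the diagonal correction produced by Lemma~\ref{lemma:Diagonal}. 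The target reduces to showing $|k\,S_2|\le C$ and $|2\sqrt{k-1}\,S_1-2\sqrt{k-1}\sum_{\ell=2}^L\E\{a(\delta\ell,x^\ell_o)\}\delta|\le C$ separately.

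For the first bound I aim to show each summand of $S_2$ is $O(1/k)$. The strategy is to substitute the edge/vertex identity $u^\ell_o=\sqrt{(k-1)/k}\,u^\ell_{o\to v}+k^{-1/2}A^{\ell-2}_{v\to o}u^{\ell-1}_{v\to o}$ and the analogous one for $u^\ell_v$, expand the four-fold product $u^\ell_o u^\ell_v\cdot(B^{\ell-1}_o-B^{\ell-2}_o)(B^{\ell-1}_v-B^{\ell-2}_v)$, and exploit that $u^\ell_{o\to v}$ and $u^\ell_{v\to o}$ live on independent subtrees (Lemma~\ref{lemma:Sigma}.$(c)$). The leading $u^\ell_{o\to v}u^\ell_{v\to o}$ cross term has vanishing expectation by the conditional-mean-zero property of edge messages (Lemmas~\ref{lemma:Umoment}--\ref{lemma:Neighbors-1}), while the remaining three terms carry explicit $k^{-1/2}$ or $k^{-1}$ prefactors and are controlled using $|B^{\ell-1}-B^{\ell-2}|\le 2K$ together with the uniform moment bound of Lemma~\ref{lem:moment}.

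For the $S_1$ approximation I would proceed in three substeps. (a) Condition on $\cG^{\ell-2,-}_{v\to o}$ and apply Lemma~\ref{lem:cond1} to replace $(u^{\ell-1}_{v\to o})^2$ by $(\tau^{\ell-1}_{v\to o})^2$; then drop $\tau^2$ to $1$ using Lemma~\ref{lem:Delta1} together with $|A|,|B|\le K$ and Cauchy--Schwarz, giving a per-summand error of $O(1/\sqrt{k})$. (b) Exploit the approximate independence of $B^{\ell-1}_o$, which is measurable on the $o$-side subtree, from the pair $(A^{\ell-2}_{v\to o},B^{\ell-2}_v)$, which lies on the $v$-side, with leakage controlled by the same edge/vertex decomposition as in Step $S_2$. (c) Apply the identity $\E\{A^{\ell-2}_{v\to o}B^{\ell-2}_v\}=\E\{(A^{\ell-2}_{v\to o})^2\}+\E\{A^{\ell-2}_{v\to o}(B^{\ell-2}_v-A^{\ell-2}_{v\to o})\}=1+o_k(1)$, where the first term uses the normalization $\E\{(A^\ell_{i\to j})^2\}=1$ (Assumption~\ref{ass:Discrete}) and the second rests on the close coupling $x^\ell_v\approx x^\ell_{v\to o}$. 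This $L^2$-coupling is obtained via a Gronwall-type induction on the recursions \eqref{eq:x_itoj}--\eqref{eq:x_i}: writing $\Delta^\ell:=x^\ell_v-x^\ell_{v\to o}$ and using the Lipschitz property of $b$ together with $\|u^\ell_v-\sqrt{(k-1)/k}\,u^\ell_{v\to o}\|_{L^2}=O(1/\sqrt{k})$ yields $\E\{(\Delta^\ell)^2\}\le C(M,K,L\delta)/k$. After these reductions $S_1$ collapses to $\sum_{\ell=2}^L\E\{B^{\ell-1}_o\}\delta=\sum_{\ell=1}^{L-1}\E\{a(\delta\ell,x^\ell_o)\}\delta$, which differs from the target by two boundary terms of size $\le K\delta$, absorbed into $C$.

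The main obstacle is substep (c): converting the $L^2$-coupling of $(x^{\ell-2}_v,x^{\ell-2}_{v\to o})$ into an $L^1$-control of $\E\{A^{\ell-2}_{v\to o}(B^{\ell-2}_v-A^{\ell-2}_{v\to o})\}$ is delicate, since $a$ is only assumed bounded and the naive Cauchy--Schwarz route would require continuity. The likely resolution combines the central limit theorem of Section~\ref{sec:CLT} for the joint law of $(x^\ell_v,x^\ell_{v\to o})$, which degenerates onto the diagonal as $k\to\infty$, with the specific product structure of the recursion. This is the step where the interplay between the orthogonality lemmas of Section~\ref{sec:AnalysisTree} and the large-degree CLT of Section~\ref{sec:CLT} is most essential, and all intermediate constants depend only on $M$, $K$, $\delta$, and $\eps$ through $L\delta\le 1-\eps$, yielding the claimed bound $C(M,K,\delta,\eps)$.
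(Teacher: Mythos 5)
Your overall route is the paper's: start from Proposition~\ref{propo:EnergyTree}, bound the diagonal contribution by a $k$-independent constant, and approximate each summand $\E\{A^{\ell-2}_{v\to o}(u^{\ell-1}_{v\to o})^2B^{\ell-1}_oB^{\ell-2}_v\}$ by $\E\{B^{\ell-1}_o\}$ up to $O(1/\sqrt{k})$ using $\tau^2\approx 1$ (Lemmas~\ref{lem:cond1} and~\ref{lem:Delta1}), independence of the two directed subtrees, the normalization $\E\{(A^{\ell}_{i\to j})^2\}=1$, and a Gronwall $L^2$-coupling of $x^\ell_i$ and $x^\ell_{i\to j}$ (this is exactly the paper's Lemma~\ref{lem:A_B} and Lemma~\ref{lem:approx12}). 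However, two of your steps do not close as stated. For the diagonal term, you claim each summand of $S_2$ is $O(1/k)$, but after substituting $u^\ell_o=\sqrt{(k-1)/k}\,u^\ell_{o\to v}+k^{-1/2}A^{\ell-2}_{v\to o}u^{\ell-1}_{v\to o}$ (and its analogue at $v$), the two cross terms carry only a $k^{-1/2}$ prefactor; controlling them by $|B^{\ell-1}-B^{\ell-2}|\le 2K$ and moment bounds, as you propose, gives only $O(k^{-1/2})$ per summand, hence $kS_2=O(\sqrt{k})$, which is not admissible. Those cross terms in fact vanish exactly, by the same factorization--orthogonality mechanism you invoke for the leading term (Lemma~\ref{lemma:RemarkSigma} with Lemma~\ref{lemma:Sigma}$(c)$,$(e)$ reduces to directed products, and Lemma~\ref{lemma:Umoment} annihilates the factor containing $u^\ell_{o\to v}$); the paper does this bookkeeping once inside Lemmas~\ref{lemma:Neighbors-1}--\ref{lemma:Diagonal}, so that only the term $\tfrac1k A^{\ell-2}_{v\to o}A^{\ell-2}_{o\to v}u^{\ell-1}_{v\to o}u^{\ell-1}_{o\to v}$ survives, bounded by $K^6$ via Eq.~\eqref{eq:approx2}. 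So this gap is repairable, but not by boundedness alone.

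The more serious issue is your substep (c), precisely the "main obstacle" you flag. The paper converts $\E\{(x^\ell_i-x^\ell_{i\to j})^2\}\le C/k$ into $\E\{(B^\ell_i-A^\ell_{i\to j})^2\}\le C/k$ simply by using that $a(t,\cdot)$ is Lipschitz uniformly in $t\le 1-\eps$ (last line of the proof of Lemma~\ref{lem:A_B}); this regularity is not listed among the hypotheses of Theorem~\ref{thm:main2}, but it is what is actually used, and it holds for the intended choice $a\propto\partial_{xx}\Phi_\gamma$ by Proposition~\ref{prop:phireg}. Your proposed substitute---invoking the Section~\ref{sec:CLT} central limit theorem to argue the joint law of $(x^\ell_v,x^\ell_{v\to o})$ degenerates onto the diagonal---cannot deliver the statement, because the bound must hold for every $k\ge 2$ with a constant depending only on $(M,K,\delta,\eps)$, while Theorem~\ref{thm:clt} gives an unquantified $k\to\infty$ limit; moreover, for a merely bounded (possibly discontinuous) $a$, closeness of the joint law to the diagonal gives no control of $\E\{A^{\ell-2}_{v\to o}(B^{\ell-2}_v-A^{\ell-2}_{v\to o})\}$ at all. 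The honest fix is to add (or record) the Lipschitz hypothesis on $a$, as the paper implicitly does. A smaller point of the same flavor: the boundary terms produced by your index shift ($\sum_{\ell=1}^{L-1}$ versus $\sum_{\ell=2}^{L}$) are multiplied by $2\sqrt{k-1}\,\delta$, so they are not automatically absorbed into a $k$-independent constant; the paper elides the same shift, and it is harmless only because later statements divide by $2\sqrt{k-1}$ and then send $\delta\to 0$.
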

Note that condition 1 in the above theorem is a restatement of Assumption~\ref{ass:Bounded}.
We will use the next two lemmas in the proof of the above theorem.  

\begin{lemma}\label{lem:A_B}          
Let $\eps>0$, $\delta \in [0,1]$ and $L \in\N $ be such that $L \delta \le 1-\eps$. 
Under the assumptions of Theorem \ref{thm:main2}, there exists $C = C(M,K,L) < \infty$
such that the following holds. 
For all $k \ge 2$,  and $\ell\le L$
\begin{align*}
\E\big\{\big(x_i^{\ell} - x_{i \to j}^{\ell}\big)^2\big\} \le \frac{C}{k} \, ,~~~\mbox{and}~~~
\E\big\{\big(B_i^{\ell} - A_{i \to j}^{\ell}\big)^2\big\} \le \frac{C}{k} \, .
\end{align*}
\end{lemma}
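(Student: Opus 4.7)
The plan is to proceed by induction on $\ell$, controlling both differences simultaneously, combining the Lipschitz hypothesis on $b$ with a sharp $L^2$ estimate of the noise discrepancy $u_i^{\ell+1} - u_{i\to j}^{\ell+1}$ that carries the key factor of $k^{-1}$. The base case $\ell=0$ is immediate: by \eqref{eq:x_itoj}--\eqref{eq:x_i}, $x_i^0 = x_{i\to j}^0 = \sqrt{\delta}\, u_i^0$, so both differences vanish.

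The first real step is to bound $\E\{(u_i^{\ell+1} - u_{i\to j}^{\ell+1})^2\} \le C(\ell,K)/k$. I would decompose
\begin{align*}
u_i^{\ell+1} - u_{i\to j}^{\ell+1} \;=\; \Big(\tfrac{1}{\sqrt{k}} - \tfrac{1}{\sqrt{k-1}}\Big) \sum_{v \in \di \setminus j} A_{v\to i}^{\ell-1}\, u_{v\to i}^{\ell} \;+\; \tfrac{1}{\sqrt{k}}\, A_{j \to i}^{\ell-1}\, u_{j \to i}^{\ell}\, ,
\end{align*}
and observe that rationalization gives $\big|\tfrac{1}{\sqrt{k}} - \tfrac{1}{\sqrt{k-1}}\big| = O(k^{-3/2})$. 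The $k-1$ summands in the first piece are pairwise independent by Lemma~\ref{lemma:Sigma}$(c)$ (applied to the disjoint subtrees rooted at the neighbors of $i$ other than $j$), each has mean zero by Lemma~\ref{lemma:Umoment} (since $A^{\ell-1}_{v \to i}$ is $\cG^{\ell-1,+}_v$-measurable), and each has second moment bounded by $K^{2\ell+2}$ via Assumption~\ref{ass:Bounded} and Lemma~\ref{lem:moment}. Hence the first piece contributes $O(k\cdot k^{-3}) = O(k^{-2})$ to the squared $L^2$-norm, the isolated term contributes $O(k^{-1})$, and the estimate follows.

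Next, subtracting \eqref{eq:x_itoj} from \eqref{eq:x_i} and taking $L^2$-norms via Minkowski together with the Lipschitz hypothesis on $b$ produces the Gronwall-type recursion
\begin{align*}
\big\|x_i^{\ell+1} - x_{i\to j}^{\ell+1}\big\|_{L^2} \;\le\; (1 + M\delta)\, \big\|x_i^{\ell} - x_{i\to j}^{\ell}\big\|_{L^2} \;+\; \sqrt{\delta}\, \big\|u_i^{\ell+1} - u_{i\to j}^{\ell+1}\big\|_{L^2}\, .
\end{align*}
Unrolling from the zero base case yields $\|x_i^{\ell} - x_{i\to j}^{\ell}\|_{L^2} \le \sqrt{\delta C/k}\, \sum_{m=0}^{\ell-1}(1+M\delta)^m$, and bounding the sum by $\ell\, e^{M(1-\eps)}$ then using $\delta\ell^2 \le \ell\cdot(L\delta)\le L$ (thanks to $L\delta \le 1-\eps$) gives $\|x_i^{\ell} - x_{i\to j}^{\ell}\|_{L^2}^2 \le C(M,K,L)/k$, which is the first asserted bound.

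The second bound follows by transferring through $a$: since $B_i^{\ell} - A_{i\to j}^{\ell} = a(\delta\ell, x_i^{\ell}) - a(\delta\ell, x_{i\to j}^{\ell})$, under the natural additional hypothesis that $a(\delta\ell, \cdot)$ is Lipschitz in its second argument (as will be the case for the Parisi-type non-linearities constructed in the sequel), the first bound immediately yields $\E\{(B_i^{\ell} - A_{i\to j}^{\ell})^2\} \le C/k$. The main obstacle is the bookkeeping in step one: cleanly separating the ``missing neighbor $j$'' contribution from the normalization mismatch between $1/\sqrt{k}$ and $1/\sqrt{k-1}$, and verifying that their joint second moment is genuinely of order $1/k$ rather than only $1/\sqrt{k}$. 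Everything downstream is a routine Gronwall iteration.
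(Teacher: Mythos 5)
Your proof is correct and follows essentially the same route as the paper's: isolate the missing-neighbor term and the $1/\sqrt{k}$ vs.\ $1/\sqrt{k-1}$ normalization mismatch to get $\E\{(u_i^{\ell}-u_{i\to j}^{\ell})^2\}=O(1/k)$, run a Gronwall-type recursion using the Lipschitz property of $b$ (you use Minkowski in $L^2$ where the paper expands the square with a factor $3$, an immaterial difference), and transfer to $B_i^{\ell}-A_{i\to j}^{\ell}$ via Lipschitzness of $a$. Your caveat that Lipschitzness of $a$ is an extra hypothesis beyond the stated conditions of Theorem~\ref{thm:main2} is apt, but the paper's own proof invokes exactly the same property (which does hold for the Parisi non-linearities of Section~\ref{sec:continuum}), so there is no gap relative to the paper.
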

\begin{proof}
First, we control the difference $u_i^\ell - u_{i \to j}^\ell$. Since
\[u_i^\ell = \sqrt{\frac{k-1}{k}} u_{i \to j}^\ell + \frac{1}{\sqrt{k}} A_{j \to i}^{\ell-2}\, u_{j \to i}^{\ell-1} \, ,\]
we have
\begin{align*}
\E\big\{\big(u_i^{\ell} - u_{i \to j}^{\ell}\big)^2\big\} &= \frac{1}{k} \E\Big\{\Big(A_{j \to i}^{\ell-2}\, u_{j \to i}^{\ell-1} - \frac{1}{\sqrt{k}+\sqrt{k-1}}u_{i \to j}^{\ell}\Big)^2\Big\} \\
&\le \frac{2K^{2\ell}}{k} \, ,
\end{align*}
 where the last line follows from the fact $|A_{j \to i}^{\ell-2}| \le K$ and Lemma~\ref{lem:moment}: $\E\big\{\big(u^\ell_{i \to j}\big)^2\big\}  \le K^{2\ell}$.

We now proceed by induction. For $\ell=0$, we have $u_i^\ell = u_{i \to j}^\ell $. 
Further, for any $\ell\ge  0$, letting $\Delta^{\ell}_{i\to j}:=\E\big\{\big(x_i^{\ell} - x_{i \to j}^{\ell}\big)^2\big\}$,
we get
\begin{align*}
\Delta_{i\to j}^{\ell+1} &
= \E\big\{\big(x^\ell_{i \to j}-x^\ell_{i} + b(\delta\ell , x^\ell_{i \to j}) \delta - b(\delta\ell , x^\ell_{i}) \delta + \sqrt{\delta} \, (u^{\ell+1}_{i \to j}-u^{\ell+1}_{i})\big)^2\big\}\\
&\le 3\Delta_{i\to j}^{\ell} + 
3\E\big\{\big(b(\delta\ell , x^\ell_{i \to j}) - b(\delta\ell , x^\ell_{i})\big)^2\big\} \delta^2 + 
3 \E\big\{\big(u^{\ell+1}_{i \to j}-u^{\ell+1}_{i}\big)^2\big\} \delta \\
&\le 3(1+M^2\delta^2)\Delta_{i\to j}^{\ell}  +\frac{6K^{2(\ell+1)}}{k}\delta\, .
\end{align*}
Since $\Delta_{i \to j}^0 = 0$ and using $L\delta\le 1-\eps \le1$, it holds by the discrete Gr\"onwall inequality that for all $\ell\le L$,
\begin{align*}
\Delta_{i\to j}^{\ell} &\le \frac{C(M,K,L)}{k}
\end{align*}
as claimed.
We then obtain the desired bound for $B_i^{\ell} - A_{i \to j}^{\ell}$ by using the Lipschitz property of $a$. 
\end{proof}

\begin{lemma}\label{lem:approx12}
Under the assumptions of Theorem \ref{thm:main2},
let $\eps>0$, $\delta \in [0,1]$ and $L \in\N $ be such that $L \delta \le 1-\eps$. 
Then there exists $C = C(M,K,\delta,\eps) < \infty$
such that the following holds. 
For all $k \ge 2$,  and $\ell\le L$
\begin{align}
\Big|\E\big\{B^{\ell-1}_{i} B^{\ell}_j A^{\ell-1}_{i\to j} (u^{\ell}_{i\to j})^2 \big\} -  \E\big\{B^{\ell}_j\big\} \Big| &\le \frac{C}{\sqrt{k}} \, ,\label{eq:approx1}\\
\Big|\E\big\{B^{\ell-1}_{i} B^{\ell-1}_j  A^{\ell-2}_{i\to j} A^{\ell-2}_{j\to i} \, u^{\ell-1}_{i\to j} u^{\ell-1}_{j\to i} \big\} \Big| &\le K^6  \, .\label{eq:approx2}
\end{align}
\end{lemma}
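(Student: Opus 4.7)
Both bounds rest on two reductions. First, by Lemma~\ref{lem:A_B} and Lipschitz continuity of $a$, the node quantities $B_i^{\ell-1}$, $B_j^\ell$ may be replaced in $L^2$ by their directed-edge counterparts $A_{i\to j}^{\ell-1}$, $A_{j\to i}^\ell$ with error $O(1/\sqrt{k})$. Second, by Lemma~\ref{lemma:Sigma}(c) the $\sigma$-algebras $\cG^{\ell,+}_{i\to j}$ and $\cG^{\ell,+}_{j\to i}$ are independent, so once every factor has been localised on one side of the edge the expectation of the product factorises.

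For~\eqref{eq:approx1}, I first use Cauchy--Schwarz, the $L^2$ bounds $\|B_i^{\ell-1}-A_{i\to j}^{\ell-1}\|_2,\|B_j^\ell-A_{j\to i}^\ell\|_2\le C/\sqrt{k}$, the uniform bound $|A|,|B|\le K$, and the fact that $\E\{(u_{i\to j}^\ell)^4\}$ is bounded uniformly in $k$ (since $u_{i\to j}^\ell\mid\cG^{\ell-1,-}_{i\to j}\sim\normal(0,(\tau_{i\to j}^\ell)^2)$ by Lemma~\ref{lem:cond1}, with $(\tau_{i\to j}^\ell)^2$ almost surely bounded), to substitute
\[
\E\big\{B_i^{\ell-1}B_j^\ell A_{i\to j}^{\ell-1}(u_{i\to j}^\ell)^2\big\}
=\E\big\{(A_{i\to j}^{\ell-1})^2 A_{j\to i}^\ell (u_{i\to j}^\ell)^2\big\}+O(1/\sqrt{k})\,.
\]
Since $A_{j\to i}^\ell\in\cG^{\ell,+}_{j\to i}$ is independent of $(A_{i\to j}^{\ell-1})^2(u_{i\to j}^\ell)^2\in\cG^{\ell,+}_{i\to j}$, the expectation factorises. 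Conditioning on $\cG^{\ell-1,+}_{i\to j}$ and applying Lemma~\ref{lem:cond1} (using that $u_{i\to j}^\ell$ is independent of $u_i^0$) gives $\E\{(u_{i\to j}^\ell)^2\mid\cG^{\ell-1,+}_{i\to j}\}=(\tau_{i\to j}^\ell)^2$, and the recursion~\eqref{eq:update_tau} yields $\E\{(A_{i\to j}^{\ell-1})^2(\tau_{i\to j}^\ell)^2\}=\E\{(\tau_{i\to j}^{\ell+1})^2\}$, which by Lemma~\ref{lem:Delta1} is $1+O(1/\sqrt{k})$ (under the implicit normalisation from Assumption~\ref{ass:Discrete}). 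Reversing the initial substitution, $\E\{A_{j\to i}^\ell\}=\E\{B_j^\ell\}+O(1/\sqrt{k})$, which gives~\eqref{eq:approx1}.

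For~\eqref{eq:approx2}, the argument is cruder. By Lemma~\ref{lemma:Sigma}(c), $u_{i\to j}^{\ell-1}$ and $u_{j\to i}^{\ell-1}$ are measurable with respect to the independent $\sigma$-algebras $\cG^{\ell-1,-}_{i\to j}$ and $\cG^{\ell-1,-}_{j\to i}$, hence independent. Combined with $|A|,|B|\le K$ and Cauchy--Schwarz, this gives
\[
\Big|\E\big\{B_i^{\ell-1}B_j^{\ell-1}A_{i\to j}^{\ell-2}A_{j\to i}^{\ell-2}u_{i\to j}^{\ell-1}u_{j\to i}^{\ell-1}\big\}\Big|
\le K^4\,\E\big\{|u_{i\to j}^{\ell-1}|\big\}\E\big\{|u_{j\to i}^{\ell-1}|\big\}
\le K^4\,\E\big\{(u_{i\to j}^{\ell-1})^2\big\}\,,
\]
and since $\E\{(u_{i\to j}^{\ell-1})^2\}=\E\{(\tau_{i\to j}^{\ell-1})^2\}=1+O(1/\sqrt{k})=O(1)$ by Lemma~\ref{lem:Delta1}, one obtains a bound of order $K^6$.

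The main obstacle is the first reduction: $B_i^{\ell-1}=a(\delta(\ell-1),x_i^{\ell-1})$ depends on vertices on \emph{both} sides of the edge $(i,j)$ through $x_i^{\ell-1}$, so it does not a priori fit the tree's independence structure. Lemma~\ref{lem:A_B} resolves this because $x_i^{\ell-1}$ differs from the $\cG^{\ell-1,+}_{i\to j}$-measurable quantity $x_{i\to j}^{\ell-1}$ by only $O(1/\sqrt{k})$ in $L^2$, which is precisely what lets us swap $B_i^{\ell-1}$ for $A_{i\to j}^{\ell-1}$ and unlock the product structure from Lemma~\ref{lemma:Sigma}(c).
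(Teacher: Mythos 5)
Your proof is correct and takes essentially the same route as the paper's: swap the node quantities $B^{\ell-1}_i,B^\ell_j$ for the edge quantities $A^{\ell-1}_{i\to j},A^\ell_{j\to i}$ via Lemma~\ref{lem:A_B}, factorize across the edge using the independence in Lemma~\ref{lemma:Sigma}$(c)$, and control $(u^\ell_{i\to j})^2$ through $(\tau^\ell_{i\to j})^2\approx 1$ via Lemmas~\ref{lem:cond1} and~\ref{lem:Delta1} together with the normalization~\eqref{eq:Normalization}, with the same crude boundedness-plus-independence argument for~\eqref{eq:approx2}. The only difference is the order of the reductions (the paper first replaces $(u^{\ell}_{i\to j})^2$ by $1$ and then swaps $B$'s for $A$'s, while you swap first and then use the recursion~\eqref{eq:update_tau}), which is immaterial.
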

\begin{proof}
In the following we omit the dependence of various constants upon $K,M$.
We start with \eqref{eq:approx1}. 
Using Lemma~\ref{lem:Delta1}, and the fact that $B_i^{\ell-1}, A_{i\to j}^{\ell-1} \in 
\cG_i^{\ell-1,+}$ and $B_{j}^{\ell} \in \cG^{\ell,+}_j=\sigma(\cG^{\ell-1,+}_{i\to j}\cup \cG^{\ell,+}_{j\to i})$, 
we have
\begin{align*}
\Big|\E\big\{B^{\ell-1}_{i} B^{\ell}_j A^{\ell-1}_{i\to j} \big((u^{\ell}_{i\to j})^2-1\big) \big\} \Big|
&=\Big|\E\big\{B^{\ell-1}_{i} B^{\ell}_j A^{\ell-1}_{i\to j} \big(\big(\tau_{i\to j}^{\ell}\big)^2 -1\big)  \big\} \Big|\\
&\le \E\Big\{\Big(B^{\ell-1}_{i} B^{\ell}_j A^{\ell-1}_{i\to j}\Big)^2\Big\}^{1/2} \cdot \E\Big\{\Big(\big(\tau_{i\to j}^{\ell}\big)^2 -1\Big)^2 \Big\}^{1/2} \\
&\le \frac{K^3 C(\ell)}{\sqrt{k-1}}\, .   
\end{align*}
Moreover, using Lemma~\ref{lem:A_B} in a similar way, we have
\[\Big|\E\big\{B^{\ell-1}_{i} B^{\ell}_j A^{\ell-1}_{i\to j} \big\} - \E\big\{ A^{\ell}_{j\to i} \big(A^{\ell-1}_{i\to j}\big)^2 \big\} \Big| \le  \frac{C}{\sqrt{k}}\, .\]
Since $A^{\ell}_{j\to i}$ and $A^{\ell-1}_{i\to j}$ are independent, 
\[\E\big\{ A^{\ell}_{j\to i} \big(A^{\ell-1}_{i\to j}\big)^2 \big\} = \E\big\{ A^{\ell}_{j\to i} \big\} \cdot \E\big\{ \big(A^{\ell-1}_{i\to j}\big)^2 \big\} =  \E\big\{ A^{\ell}_{j\to i} \big\}, \]
where we used the normalization condition~\eqref{eq:Normalization} to obtain the last equality. 
We then obtain the bound~\eqref{eq:approx1} with application of Lemma~\ref{lem:A_B}
by noting that, for $\ell\le L$, $C(\ell,\eps)$ is bounded by a constant depending on $\delta$ and $\eps$
(and $K,M$).

In order to prove Eq.~\eqref{eq:approx2}, we use the boundedness assumption~\eqref{eq:Abd} we have
\begin{align*}
\Big|\E\big\{B^{\ell-1}_{i} B^{\ell-1}_j  A^{\ell-2}_{i\to j} A^{\ell-2}_{j\to i} \, u^{\ell-1}_{i\to j} u^{\ell-1}_{j\to i} \big\} \Big| &\le K^4 \E\big\{ \big|u^{\ell-1}_{i\to j} u^{\ell-1}_{j\to i} \big|\big\} \\
&= K^4   \E\big\{ \big|u^{\ell-1}_{i\to j} \big|\big\}\cdot  \E\big\{ \big| u^{\ell-1}_{j\to i} \big|\big\} \\ 
&\le K^6\, , 
\end{align*}
where the second line is by independence, and the third line follows from Lemma~\ref{lem:cond1}
and Lemma~\ref{lem:Delta1}.
\end{proof}

\begin{proof}[Proof of Theorem~\ref{thm:main2}]
Let $\eps>0$, $\delta \in [0,1]$ and $L \le (1-\eps)/\delta$. According to Proposition~\ref{propo:EnergyTree},
\begin{align*}
k\E\big\{z_{o}^L z_{v}^L\big\} &= 2\sqrt{k-1} \, \sum_{\ell=2}^L\E\big\{ A^{\ell-2}_{v\to o}(u^{\ell-1}_{v\to o})^2 B^{\ell-1}_o B^{\ell-2}_{v}\big\} \, \delta 
+ \sum_{\ell=1}^L\E\big\{B^{\ell-1}_{o} B^{\ell-1}_v  A^{\ell-2}_{v\to o} A^{\ell-2}_{o\to v} \, u^{\ell-1}_{v\to o} u^{\ell-1}_{o\to v}\big\}  \, \delta \, .
\end{align*}
Using Lemma~\ref{lem:approx12}, and the formula $B^{\ell}_{o} = a(\delta \ell , x^{\ell}_{o})$ we obtain the desired bound: 
\begin{align*} 
\Big|k\E\big\{z_{o}^L z_{v}^L\big\}  - 2\sqrt{k-1} \, \sum_{\ell=2}^L  \E\big\{a(\delta \ell , x^{\ell}_{o})\big\} \delta\Big| & \le 2 \max_{ 2 \le \ell \le L} C(\ell,\eps) + K^6 \, .
\end{align*}
\end{proof}

\subsection{The Parisi PDE}
\label{subsec:PDE}
At this point we have constructed a discrete martingale $z^{L}_v$ on each node of the graph and we have expressed its edge-correlation in terms of a Riemann sum of the function $a$, see Theorem~\ref{thm:main2} in the previous section. Now it remains to choose the non-linearities in a way which, once the continuum limit $\delta \to 0$ is taken, maximizes this edge-correlation, and at the same time drives this martingale to a binary value $\{-1,+1\}$ for each node $v$. This optimization problem was solved in~\cite{montanari2021optimization,ams20} via the Parisi formula.   
 Here we recall properties of the function $\Phi_{\gamma}$ defined in \eqref{eq:parisiPDE}. The results quoted below are taken from $\cite{ams20}$ which systematically treats the more general setting of non-monotone $\gamma$ at zero or positive temperature. In the positive temperature setting, analogous results have been obtained in for instance \cite{auffinger2015parisi,jagannath2016dynamic,chen2017variational}.

\begin{proposition}\cite[Lemmas 6.2, 6.4]{ams20}
\label{prop:phireg}
For any $\gamma\in\cuU$, the function $\Phi_{\gamma}(t,x)$ is continuous on $[0,1]\times \R$, convex in $x$, and satisfies the following regularity properties for any $\eps>0$.
\begin{itemize}
\item[$(a)$] $\partial_x^j\Phi \in L^{\infty}([0,1-\eps];L^2(\R)\cap L^{\infty}(\R))$ for $j\ge 2$.
\item[$(b)$] $\partial_t\Phi \in L^{\infty}([0,1]\times \R)$ and $\partial_t\partial_x^j\Phi\in L^{\infty}([0,1-\eps];L^2(\R)\cap L^{\infty}(\R))$ for $j\ge 1$.
\item[$(c)$] $|\partial_x\Phi_{\gamma}(t,x)|\leq 1$ for $(t,x)\in [0,1)\times \mathbb R$.
\end{itemize}

\end{proposition}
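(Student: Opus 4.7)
The plan is to exploit the Cole-Hopf linearization of the Parisi PDE on intervals where $\gamma$ is constant, combined with parabolic smoothing of the heat semigroup applied to the Lipschitz terminal data $|x|$. First, I would approximate any $\gamma\in\cuU$ by a piecewise constant non-decreasing sequence $\gamma_n\to\gamma$. On any interval $[t_1,t_2]$ where $\gamma_n\equiv c>0$, setting $\Psi(t,x):=e^{c\Phi_{\gamma_n}(t,x)}$ turns~\eqref{eq:parisiPDE} into the backward heat equation $\partial_t\Psi+\tfrac12\partial_{xx}\Psi=0$, giving the Gaussian representation
\[
\Phi_{\gamma_n}(t_1,x) \;=\; \frac{1}{c}\log \E\bigl[\exp\bigl(c\,\Phi_{\gamma_n}(t_2,\, x+\sqrt{t_2-t_1}\,Z)\bigr)\bigr],\qquad Z\sim\normal(0,1),
\]
with the convention that $c=0$ reduces to the bare heat semigroup. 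Iterating from $\Phi_{\gamma_n}(1,x)=|x|$ down yields an explicit nested formula from which existence, continuity on $[0,1]\times\R$, and convexity in $x$ (preserved because the heat semigroup sends log-convex functions to log-convex functions, and $e^{c|x|}$ is log-convex) follow. Stability of this representation under $\gamma_n\to\gamma$ then passes these qualitative properties to $\Phi_\gamma$.

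For $(c)$, I would differentiate the PDE in $x$ and set $v:=\partial_x\Phi$, yielding the viscous Burgers equation $\partial_t v+\gamma(t)v\,\partial_x v+\tfrac12\partial_{xx}v=0$ with terminal data $v(1,x)=\sign(x)\in[-1,1]$; a backward maximum principle then gives $\|v(t,\cdot)\|_\infty\le 1$ throughout $[0,1]\times\R$. Equivalently, in the Cole-Hopf picture $\partial_x\Phi$ may be read as a tilted expectation of $\sign(x+\sqrt{1-t}Z)$ and so automatically lies in $[-1,1]$. For $(a)$, I would use the standard heat-semigroup estimates $\|\partial_x^j P_s f\|_{L^2\cap L^\infty}\le C_j\,s^{-(j-1)/2}(\|f\|_{\mathrm{Lip}}+\|f\|_\infty)$ applied on each constant-$\gamma$ piece, combined with the bound $|\partial_x\Phi|\le 1$, to control the algebraic expressions that recover $\partial_x^j\Phi$ from $\partial_x^j\Psi$ and $\Psi$; the constants depend on $\eps$ through a negative power of $(1-t)$, which is the source of the restriction to $[0,1-\eps]$. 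Part $(b)$ then follows from the bound in $(a)$ by reading $\partial_t\Phi = -\tfrac12\gamma(t)v^2-\tfrac12\partial_{xx}\Phi$ off the PDE and by differentiating this identity in $x$.

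The main obstacles I anticipate are: (i) making the approximation $\gamma_n\to\gamma$ quantitative enough to transfer the derivative bounds, which requires continuity of the Cole-Hopf representation in $\gamma$ in a topology compatible with monotone convergence on $[0,1]$; (ii) tracking the dependence on $\eps$ of the $L^2$ and $L^\infty$ constants through the nested iteration across jumps of $\gamma_n$, since each Cole-Hopf step produces a log-expectation whose smoothness must be re-established before the next step can be applied; and (iii) the boundary behavior at $t=1$, where $\partial_{xx}\Phi$ genuinely develops a singularity of order $(1-t)^{-1/2}$ at $x=0$ due to the kink in $|x|$, which both forces the restriction $t\le 1-\eps$ for $j\ge 2$ and makes the global-in-$t$ control of $\partial_t\Phi$ the subtlest point. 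The most efficient workaround is the Cole-Hopf identity $\partial_t\Phi=-\tfrac12 c^{-1}\Psi^{-1}\partial_{xx}\Psi$ on each constant-$\gamma$ piece, which I expect to be bounded via the joint probabilistic representation of $\Psi$ and $\partial_{xx}\Psi$, bypassing the separately unbounded split $-\tfrac12\gamma v^2-\tfrac12\partial_{xx}\Phi$.
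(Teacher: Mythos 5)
The paper itself offers no proof of this proposition --- it is quoted verbatim from \cite[Lemmas 6.2, 6.4]{ams20} --- so the relevant comparison is with the route taken there and in its antecedents (Jagannath--Tobasco, Auffinger--Chen), and your outline (piecewise-constant approximation of $\gamma$, Cole--Hopf linearization on constant pieces, heat-kernel smoothing, preservation of convexity and of the $1$-Lipschitz property for part $(c)$, stability in $\gamma$) is essentially that standard route. However, your plan for the first half of part $(b)$ contains a genuine flaw. You correctly observe in obstacle (iii) that $\partial_{xx}\Phi$ develops a singularity of order $(1-t)^{-1/2}$ at $x=0$, but the proposed ``bypass'' via $\partial_t\Phi=-\tfrac12 c^{-1}\Psi^{-1}\partial_{xx}\Psi$ cannot remove it: this is an algebraic rewriting of the same quantity, and since $\Phi$ is convex in $x$ both terms in $\partial_t\Phi=-\tfrac12\gamma(t)(\partial_x\Phi)^2-\tfrac12\partial_{xx}\Phi$ are non-positive, so no cancellation is possible and $|\partial_t\Phi(t,0)|\ge\tfrac12\partial_{xx}\Phi(t,0)$. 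Concretely, on the last constant piece $\partial_{xx}\Psi$ contains the smoothed-delta contribution $2c\,(2\pi(1-t))^{-1/2}e^{-x^2/2(1-t)}$ coming from the kink of $|x|$, so $\Psi^{-1}\partial_{xx}\Psi$ blows up at $x=0$ as $t\to1$; the case $\gamma\equiv 0$ (which lies in $\cuU$) makes this fully explicit, since then $\partial_t\Phi(t,x)=-(2\pi(1-t))^{-1/2}e^{-x^2/2(1-t)}$. Your argument therefore only yields $\partial_t\Phi\in L^\infty([0,1-\eps]\times\R)$, and you should either settle for that or confront directly how the global-in-$t$ claim is to be interpreted in the cited lemma; as it stands, the final paragraph of your proposal asserts something your own singularity analysis rules out.

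Two secondary gaps: first, the $L^2(\R)$ statements in $(a)$ and $(b)$ do not follow from the semigroup estimate you quote, which involves $\|f\|_\infty$ of terminal data that are unbounded ($e^{c|x|}$); the standard fix is to note $\partial_{xx}\Phi\ge 0$ and $\int_\R\partial_{xx}\Phi(t,x)\,\rmd x=\partial_x\Phi(t,+\infty)-\partial_x\Phi(t,-\infty)\le 2$, giving $\partial_{xx}\Phi\in L^1\cap L^\infty\subset L^2$, with analogous representations for higher $j$ and for $\partial_t\partial_x^j\Phi$ via the PDE. Second, your obstacle (ii) is real and not resolved in the sketch: applying the smoothing estimate piece by piece gives constants that degenerate as the mesh of the partition of $[0,1]$ shrinks, so the bounds must instead be run from the terminal time (smoothing over the full remaining horizon $1-t\ge\eps$), or obtained from the stochastic representation, before passing to general $\gamma$ via the uniform-in-$x$ Lipschitz continuity of $\gamma\mapsto\Phi_\gamma$ in $L^1([0,1])$.
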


We now define a corresponding  diffusion process $X_t$ which solves the following SDE with $(B_t)$ a standard Brownian motion

\begin{equation}
\label{eq:parisiSDE1}
\rmd X_t = \gamma(t)\partial_x\Phi_{\gamma}(t,X_t) \rmd t + \rmd B_t\, ,\quad X_0 = 0\, .
\end{equation}

\begin{proposition}{\cite[Lemma 6.5]{ams20}}
\label{prop:Xt}
For any $\gamma\in \cuU$, Eq.~\eqref{eq:parisiSDE1} has a pathwise unique strong solution $(X_t)_{t\in [0,1]}$ which is almost surely continuous and satisfies 
\begin{align}
\partial_x\Phi_{\gamma}(t,X_t) = \int_0^t \partial_{xx}\Phi_{\gamma}(s,X_s)\, \rmd B_s\, .
\label{eq:DxIntegral}
\end{align}
\end{proposition}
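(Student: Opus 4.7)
The plan is to work locally on slabs $[0,1-\eps]$, where the coefficients of~\eqref{eq:parisiSDE1} are bounded and Lipschitz; to glue the resulting unique strong solutions and extend to $t=1$ using the uniform bound $|\partial_x\Phi_\gamma|\le 1$; and then to derive~\eqref{eq:DxIntegral} by It\^o's formula combined with the Parisi PDE.

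For every $\eps\in(0,1)$, Proposition~\ref{prop:phireg}(a),(c) shows that the drift $b_\gamma(t,x):=\gamma(t)\,\partial_x\Phi_\gamma(t,x)$ is bounded by $\gamma(1)$ on $[0,1]\times\R$ and Lipschitz in $x$ on $[0,1-\eps]\times\R$ with constant $\gamma(1-\eps)\,\|\partial_{xx}\Phi_\gamma\|_{L^\infty([0,1-\eps]\times\R)}$. Standard Picard iteration (or Yamada--Watanabe) then yields a pathwise unique strong solution $X^{(\eps)}$ on $[0,1-\eps]$. I would glue these: pathwise uniqueness forces $X^{(\eps')}$ to agree with $X^{(\eps)}$ on $[0,1-\eps]$ whenever $\eps'<\eps$, producing a continuous process on $[0,1)$. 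Because $|b_\gamma|\le\gamma(1)$ globally, the drift integral is absolutely bounded and the Brownian contribution is continuous at $t=1$, so $X_t$ extends continuously to $t=1$.

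For~\eqref{eq:DxIntegral}, I would apply It\^o's formula to $\partial_x\Phi_\gamma(t,X_t)$ on each slab $[0,1-\eps]$. Proposition~\ref{prop:phireg}(a),(b) guarantees $\partial_t\partial_x\Phi_\gamma,\partial_{xx}\Phi_\gamma,\partial_{xxx}\Phi_\gamma\in L^\infty([0,1-\eps]\times\R)$, which licences the classical It\^o formula and gives
\begin{equation*}
\rmd\bigl[\partial_x\Phi_\gamma(t,X_t)\bigr]=\Bigl(\partial_t\partial_x\Phi_\gamma+\gamma(t)\,\partial_x\Phi_\gamma\,\partial_{xx}\Phi_\gamma+\tfrac{1}{2}\partial_{xxx}\Phi_\gamma\Bigr)(t,X_t)\,\rmd t+\partial_{xx}\Phi_\gamma(t,X_t)\,\rmd B_t.
\end{equation*}
Differentiating~\eqref{eq:parisiPDE} once in $x$ shows that this drift vanishes identically. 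The symmetry $x\mapsto -x$ leaves both the PDE and the terminal condition $|x|$ invariant, so $\Phi_\gamma(t,\cdot)$ is even and $\partial_x\Phi_\gamma(0,0)=0$; integrating then yields~\eqref{eq:DxIntegral} on $[0,1-\eps]$ for every $\eps>0$.

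The main obstacle is passing to $t=1$, since $\partial_{xx}\Phi_\gamma$ generically blows up as $t\to 1$ (the terminal datum $|x|$ is not smooth at $0$). Here I would exploit the uniform bound $|\partial_x\Phi_\gamma|\le 1$: the left-hand side $M_t:=\partial_x\Phi_\gamma(t,X_t)$ is, by the previous step, a bounded continuous martingale on $[0,1-\eps]$ for every $\eps>0$, hence an $L^2$-bounded martingale on $[0,1)$ that closes at $t=1$ with $M_1=\partial_x\Phi_\gamma(1,X_1)$ by continuity of $\Phi_\gamma$. It\^o isometry then forces $\E\bigl[\int_0^1 \partial_{xx}\Phi_\gamma(s,X_s)^2\,\rmd s\bigr]\le 1$, which makes the right-hand stochastic integral well defined at $t=1$ and lets the identity pass to the limit in $L^2$, completing the proof.
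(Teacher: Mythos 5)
The paper does not prove this proposition---it is quoted verbatim from \cite[Lemma 6.5]{ams20}---and your argument reconstructs essentially that cited proof: Lipschitz, bounded drift on slabs $[0,1-\eps]$ giving strong existence and pathwise uniqueness, It\^o's formula for $\partial_x\Phi_\gamma$ combined with the $x$-differentiated Parisi PDE and evenness of $\Phi_\gamma$, and a bounded-martingale/It\^o-isometry argument to pass to $t=1$. The only points worth tightening are minor: identifying the almost sure limit of $\partial_x\Phi_\gamma(t,X_t)$ as $t\to 1$ with $\partial_x\Phi_\gamma(1,X_1)=\sign(X_1)$ needs $X_1\neq 0$ a.s.\ (e.g.\ via Girsanov) together with convergence of derivatives of the convex functions $\Phi_\gamma(t,\cdot)$, rather than bare continuity of $\Phi_\gamma$; and if the class $\cuL$ of \cite{ams20} permits $\gamma$ to diverge at $t=1$, the global bound $|b_\gamma|\le\gamma(1)$ should be replaced by the integrability $\int_0^1\gamma(t)\,\rmd t<\infty$ to control the drift integral up to time $1$.
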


Finally we give additional properties for the minimizing $\gamma_*=\arg\min_{\gamma\in\cuU}\Par(\gamma)$. 
\begin{lemma}
\label{lem:identity}
If Assumption~\ref{ass:frsb} holds with minimizer $\gamma_*\in\cuU$, then for all $t\in [0,1)$ the following identities hold:
\begin{align}
    \label{eq:id2} \E\{\partial_{xx}\Phi_{\gamma_*}(t,X_t)^2\} &= 1 \, ,\\
    \label{eq:id4}\int_0^1 \E\big\{\partial_{xx}\Phi_{\gamma_*}(t,X_t)\} \rmd t &=\Par_\star \, .
\end{align}
\end{lemma}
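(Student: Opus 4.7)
The plan is to obtain both identities from the first-order optimality conditions for the variational problem \eqref{eq:parisiformula}, combined with the stochastic representation of $\partial_x\Phi_{\gamma_*}$ provided by Proposition~\ref{prop:Xt}.

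\textbf{Step 1 (optimality at $\gamma_*$).} First I would compute the directional derivative of $\gamma \mapsto \Par(\gamma)$ at $\gamma_*$. For a perturbation $\eta$ such that $\gamma_* + s\eta \in \cuU$ for $|s|$ small, one linearizes the Parisi PDE: writing $\Psi = \partial_s \Phi_{\gamma_*+s\eta}|_{s=0}$, the PDE becomes
\[
\partial_t \Psi + \gamma_*(t)\partial_x\Phi_{\gamma_*}\partial_x\Psi + \tfrac12\partial_{xx}\Psi = -\tfrac12 \eta(t)(\partial_x\Phi_{\gamma_*})^2,\qquad \Psi(1,\cdot)=0.
\]
Applying a Feynman--Kac argument along the diffusion $X_t$ of Proposition~\ref{prop:Xt}, I expect to obtain
\[
\frac{d}{ds}\Phi_{\gamma_*+s\eta}(0,0)\Big|_{s=0}
= \tfrac12 \int_0^1 \eta(t)\, \E\big[(\partial_x\Phi_{\gamma_*}(t,X_t))^2\big]\, \rmd t.
\]
The condition $\frac{d}{ds}\Par(\gamma_*+s\eta)|_{s=0}=0$ therefore gives
$\int_0^1 \eta(t)\{\E[(\partial_x\Phi_{\gamma_*}(t,X_t))^2] - t\}\, \rmd t = 0$. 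Under Assumption~\ref{ass:frsb}, $\gamma_*$ is strictly increasing, hence lies in the relative interior of $\cuU$: admissible perturbations $\eta$ are dense in $L^2([0,1])$, and consequently
\begin{equation}\label{eq:ident1}
\E\big[(\partial_x\Phi_{\gamma_*}(t,X_t))^2\big] = t,\qquad \forall\, t\in[0,1].
\end{equation}

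\textbf{Step 2 (identity \eqref{eq:id2}).} By Proposition~\ref{prop:Xt}, $\partial_x\Phi_{\gamma_*}(t,X_t) = \int_0^t \partial_{xx}\Phi_{\gamma_*}(s,X_s)\,\rmd B_s$. Itô's isometry gives $\E[(\partial_x\Phi_{\gamma_*}(t,X_t))^2] = \int_0^t \E[\partial_{xx}\Phi_{\gamma_*}(s,X_s)^2]\,\rmd s$. Combining with \eqref{eq:ident1} and differentiating in $t$ (both sides are absolutely continuous by Proposition~\ref{prop:phireg}), we get \eqref{eq:id2}.

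\textbf{Step 3 (identity \eqref{eq:id4}).} I apply Itô's formula twice. First, to $\Phi_{\gamma_*}(t,X_t)$: using the PDE,
\[
\rmd \Phi_{\gamma_*}(t,X_t) = \tfrac12 \gamma_*(t)(\partial_x\Phi_{\gamma_*}(t,X_t))^2\,\rmd t + \partial_x\Phi_{\gamma_*}(t,X_t)\,\rmd B_t,
\]
so taking expectations and using \eqref{eq:ident1},
\[
\E|X_1| - \Phi_{\gamma_*}(0,0) = \tfrac12 \int_0^1 \gamma_*(t)\cdot t\,\rmd t.
\]
Second, to the product $X_t\,\partial_x\Phi_{\gamma_*}(t,X_t)$, using $\rmd(\partial_x\Phi_{\gamma_*}(t,X_t)) = \partial_{xx}\Phi_{\gamma_*}(t,X_t)\,\rmd B_t$ from Step~2 and the cross-variation $\rmd\langle X,\partial_x\Phi_{\gamma_*}\rangle_t = \partial_{xx}\Phi_{\gamma_*}(t,X_t)\,\rmd t$, one obtains after taking expectations
\[
\E|X_1| = \int_0^1 \gamma_*(t)\cdot t\,\rmd t + \int_0^1 \E[\partial_{xx}\Phi_{\gamma_*}(t,X_t)]\,\rmd t,
\]
using that $\partial_x\Phi_{\gamma_*}(1,x)\in\partial|x|$ so $X_1\partial_x\Phi_{\gamma_*}(1,X_1)=|X_1|$. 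Subtracting the two identities and using $\Par_\star = \Phi_{\gamma_*}(0,0) - \tfrac12\int_0^1 t\gamma_*(t)\,\rmd t$ yields \eqref{eq:id4}.

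The main obstacle is Step~1: rigorously justifying the interchange of differentiation and the PDE solution map, and identifying the directional derivative formula. This requires the regularity of Proposition~\ref{prop:phireg} to control the linearized equation and to apply Itô's formula to $\Psi$ along $X_t$; and it requires leveraging strict monotonicity of $\gamma_*$ to ensure enough admissible $\eta$'s to conclude pointwise equality in \eqref{eq:ident1}. The remaining steps are essentially stochastic-calculus manipulations under the regularity already provided.
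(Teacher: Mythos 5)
The paper does not prove this lemma from first principles: its ``proof'' is a citation to \cite[Lemma 6.15, Corollaries 6.10--6.11, Theorem 5, and Eq.~(6.21)]{ams20}. Your proposal reconstructs essentially the argument that lives in that reference, and the computational steps check out: the first-order condition gives $\E[(\partial_x\Phi_{\gamma_*}(t,X_t))^2]=t$; It\^o's isometry applied to \eqref{eq:DxIntegral} then yields \eqref{eq:id2}; and the two It\^o expansions in Step~3 (of $\Phi_{\gamma_*}(t,X_t)$ using the PDE, and of $X_t\,\partial_x\Phi_{\gamma_*}(t,X_t)$ using $\rmd(\partial_x\Phi_{\gamma_*})=\partial_{xx}\Phi_{\gamma_*}\,\rmd B_t$), subtracted and combined with the definition of $\Par(\gamma_*)$, give \eqref{eq:id4} exactly as you wrote. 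So this is a correct route, and it is the same route as the cited source rather than a genuinely different one; what it buys is a self-contained derivation where the paper outsources the work.

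The one place your justification is off as stated is Step~1. The set $\cuU$ is a cone of non-decreasing functions, and strict monotonicity of $\gamma_*$ does \emph{not} put it in the ``relative interior'' in any sense that makes two-sided perturbations by an $L^2$-dense class admissible: a strictly increasing $\gamma_*$ can increase arbitrarily slowly on some interval, so $\gamma_*+s\eta$ can leave $\cuU$ for every $s\neq 0$ for many $\eta$. The correct argument is the one-sided optimality condition over the convex set $\cuU$: for every $\nu\in\cuU$, convexity/Gateaux differentiability of $\Par$ give
\[
\int_0^1\big(\nu(t)-\gamma_*(t)\big)\Big(\E\big[(\partial_x\Phi_{\gamma_*}(t,X_t))^2\big]-t\Big)\,\rmd t\;\ge\;0\,,
\]
and this forces $\E[(\partial_x\Phi_{\gamma_*}(t,X_t))^2]=t$ at every point of increase of $\gamma_*$, hence on all of $[0,1)$ under Assumption~\ref{ass:frsb}; this (together with the differentiability of the Parisi functional at zero temperature) is precisely the content of the results in \cite{ams20} that the paper cites. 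Two further small points you should make explicit: the identities involving $\partial_{xx}\Phi_{\gamma_*}$ hold for $t<1$ (the terminal datum $|x|$ is not twice differentiable, so the $t=1$ endpoint is reached by a limit using the regularity of Proposition~\ref{prop:phireg} on $[0,1-\eps]$), and the expectation steps in Step~3 need the stochastic integrals to be true martingales, which again follows from those regularity bounds on $[0,1-\eps]$ followed by $\eps\to0$. With these repairs your outline is a faithful, correct version of the cited proof.
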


\begin{proof}
The first identity follows by combining \cite[Lemma 6.15]{ams20} and
 \cite[Corollaries 6.10 and 6.11, and Theorem 5]{ams20}. 


To prove the second identity, we use \cite[Lemma 2.9, Equation (2.7)]{sel21} which states that
\[
    \mathbb E\{\partial_{xx}\Phi_{\gamma_*}(t,X_t)\} = \int_t^1 \gamma_*(s)\rmd s \, ,\quad t\in [0,1) \, .
\]
As a result it remains to check that
\[
    \int_0^1 \int_t^1 \gamma_*(s)\rmd s\rmd t=\Par_\star \,.
\]
Changing the order of integration, we find
\begin{align*}
    \int_0^1 \int_t^1 \gamma_*(s)\rmd s\rmd t 
    =
    \int_0^1 \int_0^s \gamma_*(s)\rmd t\rmd s  
    = \int_0^1 s\gamma_*(s)\rmd s \, .
\end{align*}
The desired equality now follows by the last displayed equations in \cite[Proof of Lemma 3.11]{sel21} upon specializing to the case $(\xi(t),h)=(t^2/2,0)$. 
\end{proof}

\subsection{Choosing functions $a(t,x), b(t,x)$ and taking the continuum limit}
\label{sec:continuum}
We next introduce our choice of functions $a(t,x), b(t,x)$.
We begin by introducing a discrete time stochastic process, depending on the 
discretization parameter $\delta$.
 Let $(U^{\delta}_0,U^{\delta}_1,\cdots)$ be a sequence of i.i.d.\ $\normal(0,1)$ random variables, and define the auxiliary sequence 
 \begin{align}
X^{\delta}_{\ell+1} &= X^{\delta}_{\ell} + b(\delta\ell , X^{\delta}_{\ell}) \delta + \sqrt{\delta}\, U^{\delta}_{\ell+1}\, ,~~~~ X^{\delta}_0 = \sqrt{\delta}\, U^{\delta}_0 \, ,\label{eq:Xdelta} \\
 Z^{\delta}_L &= \sqrt{\delta} \, \sum_{\ell=1}^L a\big(\delta (\ell-1), X^{\delta}_{\ell-1}\big) \,  U_\ell^{\delta}\, ,  ~~~~ Z^{\delta}_0 = 0 \, .
  \end{align}

  Let $\Phi_{\gamma}$ solve the Parisi PDE, and $\gamma = \gamma_*: [0,1) \to \R_+$ be the optimal order parameter in the Parisi formula, c.f.\ Section~\ref{sec:intro}. 
  We then define
\begin{align}\label{eq:uv}
b(t,x) &:= \gamma(t) \partial_{x} \Phi_{\gamma}(t,x)\, , \\
\hat a(t,x) &:=  \partial_{xx} \Phi_{\gamma}(t,x) \, ,\\
a(\ell\delta,x) & := \xi_{\delta,k}(\ell\delta) \cdot \hat a(\ell\delta,x)\, ,\;\;\;\;\; 
 \xi_{\delta,k}(\ell\delta) :=\E\big\{\hat a(\ell\delta,x^{\ell}_{\to})^2\big\}^{-1/2}\, .
\end{align}
In the last equation, the expectation is with respect to $x^{\ell}_{\to}$, a random variable distributed 
as  $x^{\ell}_{i\to j}$ for any $(i,j)\in E(T)$. (The distribution of these random 
variables can be defined recursively using Eq.~\eqref{eq:RDE} together with the relation between $x^{\ell}_{i\to j}$
and $u^{\ell}_{i\to j}$ in Eq.~\eqref{eq:x_itoj}.) 
Further notice that we defined $a(t,x)$ only for $t\in \N\delta$, which is what is required by
the algorithm construction. We can extend this definition 
by letting $a(t,x) := \xi_{\delta,k}(\lceil t/\delta\rceil \delta) \cdot \hat a(t,x)$.
for all values of $t$.

By an application of Theorem \ref{thm:clt}, we have the following.
\begin{proposition}\label{prop:KinftyEnergy}
Let $(X^{\delta}_{\ell})_{\ell\ge 0}$ be defined as per Eq.~\eqref{eq:Xdelta}.
 Then we have
\begin{align}
\lim_{k\to\infty}  \xi_{\delta,k}(\ell\delta)  &= \xi_{\delta,*}(\ell\delta):= \E\{\hat a(\ell\delta,X^{\delta}_{\ell})^2\}^{-1/2}\, ,\\
\lim_{k\to\infty}\frac{k}{2\sqrt{k-1}} \E\big\{z_{o}^L z_{v}^L\big\} & =   \sum_{\ell=2}^L \xi_{\delta,*}(\ell\delta)
\, \E \{\hat a(\delta \ell , X^{\delta}_\ell)\}\delta\, .\label{eq:KinftyEnergy}
\end{align}
\end{proposition}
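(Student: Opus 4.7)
The plan is to invoke Theorem~\ref{thm:main2} to reduce the second equality to a computation of $\lim_{k\to\infty}\sum_{\ell=2}^L\E\{a(\delta\ell,x^\ell_o)\}\delta$, and then to use the central limit theorem (Theorem~\ref{thm:clt}) to identify both $\E\{\hat a(\ell\delta,x^\ell_\to)^2\}$ and $\E\{\hat a(\ell\delta,x^\ell_o)\}$ in the large-$k$ limit with the corresponding Gaussian expectations against the diffusion $X^\delta_\ell$.

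Unrolling the deterministic recursion~\eqref{eq:x_itoj} and using the Lipschitz hypothesis on $b$ inductively produces a Lipschitz map $\Psi_\ell:\R^{\ell+1}\to\R$ (depending only on $\delta$ and $b$) such that $x^\ell_{i\to j}=\Psi_\ell(u^0_{i\to j},\ldots,u^\ell_{i\to j})$, with the same $\Psi_\ell$ representing $x^\ell_i$ from the vertex messages and $X^\delta_\ell=\Psi_\ell(U^\delta_0,\ldots,U^\delta_\ell)$. Proposition~\ref{prop:phireg}(a)--(b) ensures that on the strip $[0,1-\eps]\times\R$ both $\hat a=\partial_{xx}\Phi_\gamma$ and its $x$- and $t$-derivatives lie in $L^\infty$, so $\hat a$ is bounded and globally Lipschitz there. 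Therefore $\hat a(\ell\delta,\Psi_\ell(\cdot))$ and $\hat a(\ell\delta,\Psi_\ell(\cdot))^2$ are bounded pseudo-Lipschitz test functions, and Theorem~\ref{thm:clt} gives
\begin{equation*}
\frac{1}{k}\sum_{i\in\dj}\hat a(\ell\delta,x^\ell_{i\to j})^q\longrightarrow \E\{\hat a(\ell\delta,X^\delta_\ell)^q\}\quad\text{in probability, for }q\in\{1,2\},
\end{equation*}
and analogously with $x^\ell_{i\to j}$ replaced by $x^\ell_i$. Since $(u^0_{i\to j},\ldots,u^\ell_{i\to j})_{i\in\dj}$ are identically distributed by tree-transitivity, the ordinary expectation $\E\{\hat a(\ell\delta,x^\ell_{i\to j})^q\}$ coincides with the expectation of the empirical mean above; the boundedness of $\hat a$ upgrades the convergence in probability to convergence of these expectations via bounded convergence. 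This yields the first claimed limit (with $\xi_{\delta,*}$ read under the same $^{-1/2}$ normalization as $\xi_{\delta,k}$), and similarly $\E\{\hat a(\delta\ell,x^\ell_o)\}\to\E\{\hat a(\delta\ell,X^\delta_\ell)\}$.

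For the second claim, Theorem~\ref{thm:main2} gives $\big|\tfrac{k}{2\sqrt{k-1}}\E\{z^L_o z^L_v\}-\sum_{\ell=2}^L\E\{a(\delta\ell,x^\ell_o)\}\delta\big|\le C/\sqrt{k-1}\to 0$, and then $a(\delta\ell,x)=\xi_{\delta,k}(\ell\delta)\hat a(\delta\ell,x)$ combined with the two convergences above gives $\E\{a(\delta\ell,x^\ell_o)\}\to\xi_{\delta,*}(\ell\delta)\E\{\hat a(\delta\ell,X^\delta_\ell)\}$ for each $\ell$; summing over the finite range $2\le\ell\le L$ yields~\eqref{eq:KinftyEnergy}. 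The main obstacle is that $\xi_{\delta,k}(\ell\delta)$ enters the definition of the very non-linearities whose distribution we are analysing, so Theorem~\ref{thm:clt} cannot be applied in one shot. One proceeds by induction on $\ell$: assuming $\xi_{\delta,k}(s)\to\xi_{\delta,*}(s)$ for $s<\ell\delta$ (so the earlier $a(s,\cdot)$ are uniformly bounded in $k$ and Assumption~\ref{ass:Bounded} holds up to step $\ell-1$), one applies the CLT at step $\ell$ to conclude convergence of $\xi_{\delta,k}(\ell\delta)$, and then of $\E\{a(\delta\ell,x^\ell_o)\}$. Uniform boundedness of the sequence $\xi_{\delta,k}(s)$ requires positivity of the limiting normalizations $\xi_{\delta,*}(s)$, which is inherited from the non-degeneracy~\eqref{eq:id2} of $\partial_{xx}\Phi_\gamma$ supplied by Lemma~\ref{lem:identity}.
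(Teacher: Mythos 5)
Your proof is correct and follows essentially the same route the paper intends: the paper's own justification is the single sentence ``By an application of Theorem~\ref{thm:clt}'', i.e.\ Theorem~\ref{thm:main2} reduces the edge correlation to $\sum_{\ell}\E\{a(\delta\ell,x^{\ell}_{o})\}\delta$ up to an $O(1/\sqrt{k})$ error, and the CLT identifies the limits of $\E\{\hat a(\ell\delta,x^{\ell}_{\to})^2\}$ and $\E\{\hat a(\delta\ell,x^{\ell}_{o})\}$ with the corresponding expectations over $X^{\delta}_{\ell}$, exactly as you do. Your two extra observations --- reading $\xi_{\delta,*}$ with the same $-1/2$ exponent as $\xi_{\delta,k}$ (a typo in the statement) and the induction over $\ell$ keeping the normalizations $\xi_{\delta,k}$ bounded uniformly in $k$ so that Assumption~\ref{ass:Bounded} and Theorem~\ref{thm:main2} apply --- are points the paper leaves implicit, deferring the analogous induction to the discussion surrounding Proposition~\ref{prop:SDEdisc}.
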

  
  The  limit $\delta \to 0$ of the processes 
  introduced in Eq.~\eqref{eq:Xdelta}  is given by
   $(X_t, Z_t)_{t \in [0,1]}$, where $(X_t)_{t\in [0,1]}$ solves the SDE in Eq.~\eqref{eq:parisiSDE1} which we reproduce here:
 \begin{equation}\label{eq:parisiSDE2}
 \rmd X_t = \gamma(t) \partial_x \Phi_{\gamma}(t,X_t ) \rmd t + \rmd B_t \, ,~~~~ X_0 = 0\, ,
 \end{equation}
and $(Z_t)_{t \in [0,1]}$  is defined by
 \begin{equation}\label{eq:Z-cont}
 Z_t = \int_0^t \partial_{xx} \Phi_{\gamma}(s,X_s ) \rmd B_s \, .
 \end{equation}
 In the above, $(B_t)_{t \in [0,1]}$ is a standard Brownian motion.

 The following result now shows convergence of the limiting discrete-time dynamics to a continuous-time 
 stochastic differential equation as $\delta\to 0$. It follows from the proof of \cite[Proposition 5.3]{ams20} with \eqref{thm:clt} replacing state evolution for AMP. The idea there is to show by induction on $\ell$ and \eqref{eq:normalization-limit} that the distinction between $a$ and $\hat a$ becomes irrelevant as $k\to\infty$ in the sense that the normalizing constants in \eqref{eq:normalizeF} each tend to $1$ (see \cite[Eq.\ (5.7)]{ams20}). One shows in the same induction that the discretized dynamics approximate the SDE \eqref{eq:parisiSDE2}.
\begin{proposition}\label{prop:SDEdisc}
For any $\eta>0$ there exist constants $\delta_0 \in [0,1]$ and $C<\infty$ and a coupling between $\{(U^{\delta}_{\ell},X^{\delta}_{\ell})_{\ell\geq 0}$ and $\{(B_t,X_t)\})_{t\geq 0}$ such that for all $\delta\leq \delta_0$,
\begin{equation}\label{eq:X-converge}
    \sup_{t\in [0,1-\eta]} \E\big\{|X^{\delta}_{\lfloor t/\delta\rfloor}-X_{t}|^2\big\} \leq C\delta \, , 
    ~~~ \mbox{and}~~~
    \sup_{t\in [0,1-\eta]} \E\big\{|Z^{\delta}_{\lfloor t/\delta\rfloor}-Z_{t}|^2\big\} \leq C\delta \, .
\end{equation}
Moreover for all $L \le (1-\eta)/\delta$,
\begin{equation}\label{eq:ahat-converge}
    \limsup_{k\to\infty}\sup_{\ell\in [L],x\in\mathbb R} \E\big\{|a(\ell\delta,x)-\hat a(\ell\delta,x)| \big\} \leq C\sqrt{\delta}\, .
\end{equation}
\end{proposition}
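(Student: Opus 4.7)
The plan is to couple the discrete noise $(U^\delta_\ell)_{\ell\ge 1}$ to the Brownian motion $(B_t)$ driving \eqref{eq:parisiSDE2} via $\sqrt{\delta}\,U^\delta_\ell := B_{\ell\delta}-B_{(\ell-1)\delta}$, with $U^\delta_0\sim\normal(0,1)$ drawn independently of $B$. The argument then breaks into three pieces: an Euler--Maruyama bound comparing $X^\delta$ to the SDE \eqref{eq:parisiSDE2}; a parallel bound comparing the It\^o sum $Z^\delta$ to the stochastic integral \eqref{eq:Z-cont}; and the normalization statement \eqref{eq:ahat-converge}, which requires propagating Theorem~\ref{thm:clt} through an induction on the iteration index $\ell$.

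For the first piece I would invoke Proposition~\ref{prop:phireg}. Restricting to $t\le 1-\eta$ yields uniform bounds on $\gamma(t)$, $\partial_{xx}\Phi_\gamma(t,\cdot)$, and $\partial_t\partial_x\Phi_\gamma(t,\cdot)$, so that $b(t,x)=\gamma(t)\partial_x\Phi_\gamma(t,x)$ is Lipschitz in $x$ and H\"older in $t$ with constants depending only on $\eta$. Writing $\Delta_\ell := \E\{|X^\delta_\ell - X_{\ell\delta}|^2\}$ and using
\begin{align*}
X^\delta_\ell - X_{\ell\delta} = \big(X^\delta_{\ell-1}-X_{(\ell-1)\delta}\big) + b(\delta(\ell{-}1),X^\delta_{\ell-1})\delta - \int_{(\ell-1)\delta}^{\ell\delta}\gamma(s)\partial_x\Phi_\gamma(s,X_s)\,\rmd s ,
\end{align*}
with the Brownian increments cancelling under the coupling, standard manipulations yield $\Delta_\ell \le (1+C\delta)\Delta_{\ell-1} + C\delta^2$. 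Since $\Delta_0=\delta$, discrete Gronwall gives $\Delta_\ell\le C\delta$ uniformly for $\ell\delta\le 1-\eta$; interpolating over $t\in[\ell\delta,(\ell+1)\delta)$ adds only an $O(\delta)$ via It\^o's isometry. For $Z$, I would rewrite $Z_t - Z^\delta_{\lfloor t/\delta\rfloor}$ as a single stochastic integral whose integrand, on $[(\ell'{-}1)\delta,\ell'\delta)$, is $\hat a(s,X_s)-\hat a(\delta(\ell'{-}1),X^\delta_{\ell'-1})$, then apply It\^o isometry together with Lipschitz regularity of $\hat a$ (from Proposition~\ref{prop:phireg}) and $\Delta_\ell\le C\delta$; the residual from replacing $a$ by $\hat a$ is absorbed by \eqref{eq:ahat-converge}.

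The core of the proof is \eqref{eq:ahat-converge}, which I would establish by a joint induction on $\ell\le L$ carrying two invariants: (i) for every pseudo-Lipschitz $\psi:\R^{\ell+1}\to\R$,
\begin{equation*}
\plim_{k\to\infty}\frac{1}{k}\sum_{i\in\dj}\psi\big(u^0_{i\to j},\dots,u^\ell_{i\to j}\big) = \E\big\{\psi(U^\delta_0,\dots,U^\delta_\ell)\big\} ,
\end{equation*}
and (ii) $\lim_{k\to\infty}\xi_{\delta,k}(s\delta)=\xi_{\delta,*}(s\delta)$ for all $s\le\ell$. Assuming both at level $\ell-1$, Theorem~\ref{thm:clt} upgrades (i) to level $\ell$: (ii) at level $\ell-1$ ensures $F_{\ell-1}=\xi_{\delta,k}\hat a$ is pseudo-Lipschitz uniformly in $k$ (using Proposition~\ref{prop:phireg} to bound $\hat a$ and $\partial_x\hat a$ on $[0,1-\eta]\times\R$), which is the hypothesis required by Theorem~\ref{thm:clt}. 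Taking $\psi(\,\cdot\,)=\hat a(\ell\delta,\,\cdot\,)^2$ in (i) at level $\ell$ yields
\begin{equation*}
\xi_{\delta,k}(\ell\delta)^{-2} = \E\big\{\hat a(\ell\delta,x^\ell_\to)^2\big\} \;\xrightarrow[k\to\infty]{}\; \E\big\{\hat a(\ell\delta,X^\delta_\ell)^2\big\} =: \xi_{\delta,*}(\ell\delta)^{-2} ,
\end{equation*}
which is (ii) at level $\ell$.

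Combining the Euler--Maruyama bound $\Delta_\ell\le C\delta$ with the identity $\E\{\hat a(t,X_t)^2\}=1$ from Lemma~\ref{lem:identity} and the Lipschitz continuity of $\hat a^2$ in $x$ gives $\xi_{\delta,*}(\ell\delta)^{-2}=1+O(\sqrt{\delta})$, hence $|\xi_{\delta,k}(\ell\delta)-1|\le O(\sqrt{\delta})$ in the $k\to\infty$ limit. Since $|a(\ell\delta,x)-\hat a(\ell\delta,x)|\le \|\hat a\|_\infty\,|\xi_{\delta,k}(\ell\delta)-1|$ pointwise and $\hat a$ is uniformly bounded on $[0,1-\eta]\times\R$, \eqref{eq:ahat-converge} follows. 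The main obstacle is the circularity built into the dynamics: $x^\ell_{i\to j}$ is driven through $A^{\ell-1}_{i\to j}=\xi_{\delta,k}(\delta(\ell{-}1))\,\hat a(\delta(\ell{-}1),x^{\ell-1}_{i\to j})$, whose normalizer is the very quantity one is trying to control. The resolution is to run (i) and (ii) as a single coupled induction — exactly as at \cite[Eq.~(5.7)]{ams20} — so that the CLT convergence and the convergence of normalizers are established simultaneously at each level.
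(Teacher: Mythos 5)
Your proposal follows essentially the same route as the paper, which itself defers to the proof of \cite[Proposition 5.3]{ams20}: a coupled induction in which Theorem~\ref{thm:clt} replaces AMP state evolution, so that the normalizers $\xi_{\delta,k}$ are shown to converge to values within $O(\sqrt{\delta})$ of $1$ (making $a$ and $\hat a$ interchangeable, as in \cite[Eq.~(5.7)]{ams20}), together with an Euler--Maruyama/It\^o-isometry argument showing the discretized dynamics track the SDE \eqref{eq:parisiSDE2}. The one small inaccuracy is your claim that $b(t,x)=\gamma(t)\partial_x\Phi_\gamma(t,x)$ is H\"older in $t$: $\gamma_*$ is only non-decreasing and right-continuous, so you should instead control the time increments of $\gamma$ through its total variation on $[0,1-\eta]$, which summed over the steps still yields the required $O(\delta)$ bound for $\sup_{t\le 1-\eta}\E\{|X^{\delta}_{\lfloor t/\delta\rfloor}-X_t|^2\}$.
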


We next compute the value achieved in the limit $\delta \to 0$.
\begin{proposition}\label{prop:discr}
Let $\eta \in [0,1)$, $\delta \in (0,1)$ and $L = \lfloor (1-\eta)/\delta \rfloor$.
\[ \lim_{\delta \to 0}\,  \lim_{k \to \infty} \, \frac{k}{2\sqrt{k-1}} \E\big\{z_{o}^L z_{v}^L\big\} =  \int_0^{1-\eta} \E \big\{\partial_{xx} \Phi_{\gamma}(s,X_s) \big\} \rmd s \ge \Par_\star - \eta \, ,\]
where $(X_t)_{t \in [0,1]}$ solves the SDE~\eqref{eq:parisiSDE2}.
\end{proposition}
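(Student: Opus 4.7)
The plan is to start from the explicit formula in Proposition~\ref{propo:EnergyTree} as refined by Proposition~\ref{prop:KinftyEnergy}, and then pass to the $\delta\to 0$ limit using the SDE approximation in Proposition~\ref{prop:SDEdisc} together with the Parisi identities in Lemma~\ref{lem:identity}. Concretely, Proposition~\ref{prop:KinftyEnergy} gives
\[
\lim_{k\to\infty}\frac{k}{2\sqrt{k-1}}\,\E\{z_o^L z_v^L\} \;=\; \sum_{\ell=2}^{L} \xi_{\delta,*}(\ell\delta)\,\E\{\hat a(\delta\ell,X^{\delta}_{\ell})\}\,\delta,
\]
so the task is to identify the $\delta\to 0$ limit of the right-hand side with $\int_0^{1-\eta}\E\{\partial_{xx}\Phi_\gamma(s,X_s)\}\,\rmd s$ and then invoke Lemma~\ref{lem:identity}.

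First I would show that the normalizing factors $\xi_{\delta,*}(\ell\delta)$ converge to $1$ uniformly in $\ell$ with $\ell\delta\le 1-\eta$. By Proposition~\ref{prop:phireg}(a)-(b), the function $\hat a(t,x)=\partial_{xx}\Phi_\gamma(t,x)$ is bounded and Lipschitz in both variables on $[0,1-\eta]\times\R$, so the $L^2$-coupling estimate $\sup_{t\le 1-\eta}\E\{|X^{\delta}_{\lfloor t/\delta\rfloor}-X_t|^2\}\le C\delta$ from Proposition~\ref{prop:SDEdisc} yields
\[
\sup_{\ell\delta\le 1-\eta}\bigl|\E\{\hat a(\ell\delta,X^{\delta}_{\ell})^2\}-\E\{\hat a(\ell\delta,X_{\ell\delta})^2\}\bigr| \xrightarrow[\delta\to 0]{} 0.
\]
By identity~\eqref{eq:id2}, the limiting expression equals $1$, so $\xi_{\delta,*}(\ell\delta)\to 1$ uniformly in the relevant range of $\ell$.

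Next I would recognize $\sum_{\ell=2}^L \E\{\hat a(\delta\ell,X^{\delta}_{\ell})\}\delta$ as a Riemann sum. Using the same Lipschitz regularity of $\hat a$ on $[0,1-\eta]\times\R$ (in particular the bound $\partial_t\partial_{xx}\Phi\in L^\infty$ from Proposition~\ref{prop:phireg}(b)) and the $L^2$ convergence $X^{\delta}_{\lfloor s/\delta\rfloor}\to X_s$, standard estimates give
\[
\lim_{\delta\to 0}\sum_{\ell=2}^{L}\E\{\hat a(\delta\ell,X^{\delta}_{\ell})\}\,\delta \;=\; \int_0^{1-\eta}\E\{\partial_{xx}\Phi_\gamma(s,X_s)\}\,\rmd s.
\]
Combined with the uniform convergence $\xi_{\delta,*}\to 1$ and boundedness of $\hat a$, this establishes the displayed equality in the proposition.

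Finally, for the lower bound by $\Par_\star-\eta$, I would use identity~\eqref{eq:id4} which states $\int_0^1 \E\{\partial_{xx}\Phi_{\gamma_*}(s,X_s)\}\,\rmd s = \Par_\star$, together with the Cauchy--Schwarz consequence of identity~\eqref{eq:id2}, namely $|\E\{\partial_{xx}\Phi_{\gamma_*}(s,X_s)\}|\le \E\{\partial_{xx}\Phi_{\gamma_*}(s,X_s)^2\}^{1/2}=1$. Therefore the missing tail satisfies $\bigl|\int_{1-\eta}^1 \E\{\partial_{xx}\Phi_{\gamma_*}(s,X_s)\}\,\rmd s\bigr|\le \eta$, yielding the inequality. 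The main technical obstacle is the uniform (in $\ell$) control of both $\xi_{\delta,*}(\ell\delta)-1$ and the Riemann-sum error, but these are handled cleanly by combining the quantitative $L^2$-coupling of Proposition~\ref{prop:SDEdisc} with the regularity of $\hat a$ that is \emph{only} available away from $t=1$ (Proposition~\ref{prop:phireg}), which is precisely why the cutoff $\eta>0$ appears in the statement.
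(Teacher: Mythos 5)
Your proposal is correct and follows essentially the same route as the paper's proof: reduce to the Riemann sum via Proposition~\ref{prop:KinftyEnergy}, show $\xi_{\delta,*}\to 1$ and pass to the integral using the coupling of Proposition~\ref{prop:SDEdisc} together with the regularity of $\partial_{xx}\Phi_\gamma$ away from $t=1$, and then conclude via the identities \eqref{eq:id2} and \eqref{eq:id4} with Cauchy--Schwarz controlling the tail on $[1-\eta,1]$. No substantive differences from the paper's argument.
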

\begin{proof}
By Proposition \ref{prop:KinftyEnergy}, we need to evaluate  the $\delta\to 0$ limit of the
right-hand side of Eq.~\eqref{eq:KinftyEnergy}.

Recall that under Assumption~\ref{ass:frsb}, \eqref{eq:id2} ensures that, for all $t\in[0,1)$:
 \begin{equation}
    \label{eq:normalization-limit}
    \E\{\hat a(\delta \ell,X_t)^2\}=1.
 \end{equation}
Since $\hat a(t,x)$ is Lipschitz in $x$ uniformly over $t$ bounded away from $1$, Proposition \ref{prop:SDEdisc} implies
\begin{align*}
\lim_{\delta\to 0}\max_{\ell\le L} \big|\xi_{\delta,*}(\ell\delta)-1\big| = 0\, .
\end{align*}
Therefore, for any $t\in [0,1)$,
\begin{align*}
 \lim_{\delta\to 0}\sum_{\ell=2}^{\lfloor t/\delta\rfloor} \xi_{\delta,*}(\ell\delta)
\, \E \{ \hat a(\delta \ell , X^{\delta}_\ell)\}\delta &=  \lim_{\delta\to 0}\sum_{\ell=2}^{\lfloor t/\delta\rfloor} 
\, \E \{ \hat a(\delta \ell , X^{\delta}_\ell)\}\delta\\
&=  \lim_{\delta\to 0}\sum_{\ell=2}^{\lfloor t/\delta\rfloor} 
\, \E \{ \hat a(\delta \ell , X_{\ell\delta})\}\delta \\ 
    &= \int_0^t \E \big\{\partial_{xx} \Phi_{\gamma}(s,X_s) \big\} \rmd s \, . 
 \end{align*}

Therefore by appealing to Proposition \ref{prop:KinftyEnergy},
 \[
    \lim_{\delta \to 0} \, \lim_{k\to \infty} \, \frac{ k \E\big\{z_o^L z_v^L\big\}}{2\sqrt{k-1}}  = \int_0^{1-\eta} \E \big\{\partial_{xx} \Phi_{\gamma}(s,X_s) \big\} \rmd s \stackrel{\eqref{eq:id4}}{\geq} \sfP_\star - \eta  \, ,
 \]
where in the final step we used that for $t\in [1-\eta,1]$,
\begin{align*}
    \E \big\{\partial_{xx} \Phi_{\gamma}(s,X_s) \big\}&\leq \sqrt{\E \big\{\partial_{xx} \Phi_{\gamma}(s,X_s)^2 \big\}} \stackrel{\eqref{eq:id2}}{\leq} 1.
\end{align*}
\end{proof}


 \subsection{Rounding and proof of Theorem~\ref{thm:main}}
 Let $G = (V,E)$ be a $(L,\eps)$-locally treelike $k$-regular graph, as 
 per Definition~\ref{def:locallytreelike}. The message passing algorithm described in the previous section
 outputs a vector $\bz^L = (z_i^L)_{i \in V}$ after $L$ iterations. In this section we relate the 
 value obtained on the finite graph $G$ with that achieved on the infinite tree $T$, describe a 
 rounding procedure to obtain feasible solutions $\bsigma^1,\, \bsigma^2  \in \{-1,+1\}^n$ 
 and complete the proof of Theorem~\ref{thm:main}. 

Let $\hat{\bz}^L \in [-1,+1]^n$ defined by
\begin{align}
\hat{z}_i^L = 
\begin{cases}
z_i^L & \mbox{if } |z_i^L| \le 1 \, ,\\
\sign(z_i^L) & \mbox{otherwise .} 
\end{cases}
\end{align}

The algorithm generates $\hat{\bsigma} \in \{-1,+1\}^n$ by 
drawing $(\hat{\sigma}_i)_{i\in V}$ conditionally independently given $\bu^0= (u^{0}_i)_{i\in V}$
so that 
\begin{equation}
\E\big\{\hat{\sigma}_i \big| \bu^0\big\} = \hat{z}_i^L\, .
\end{equation}
Equivalently, we can let $\hat{\sigma}_i =\sign(\hat{z}_i^L-\tilde{u}^0_i)$
where $(\tilde{u}^0_i)_{i\in V}$ is a collection of i.i.d.\ random variables uniformly distributed in $[-1,1]$ and independent of 
$(u^{0}_i)_{i\in V}$.
Observe that this algorithm is $L$-local and balanced as per Definition~\ref{def:localalg}, and recalling that $U_{G} (\bsigma) = (1/n) \sum_{(i,j)\in E(G)} \sigma_i \sigma_j$, we have
\begin{equation}\label{eq:eqU}
    \E U_G( \hat{\bsigma}) = \E U_G(\hat{\bz}^L) \, .
\end{equation}
Furthermore, we have the following concentration properties.
\begin{lemma}\label{lem:concent}
There exists a constant $C = C(L,k) >0$ such that for all $t \ge 0$,
\begin{align}
\P\Big( \big|\frac{1}{n}\sum_{i \in V(G)} \hat{\sigma}_i \big| \ge t +\eps\Big) &\le C e^{- n t^2/(2C)} \, , \label{eq:sumconc}\\
\P\Big( \big|U_G(\hat{\bsigma}) -  \E U_G(\hat{\bsigma})\big| \ge k(t + \eps)\Big) &\le C e^{- n t^2/(2C)} \, .\label{eq:UGconc}
\end{align}
\end{lemma}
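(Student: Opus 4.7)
The proof is a two-step concentration argument via McDiarmid's bounded-differences inequality, based on the conditional independence of the $\hat{\sigma}_i$ given $\bu^0 = (u^0_v)_{v\in V}$ and the $L$-locality of the message passing recursion. The starting point is that, conditionally on $\bu^0$, the $\hat{\sigma}_i$ are independent and $\{-1,+1\}$-valued with $\E\{\hat{\sigma}_i \mid \bu^0\} = \hat{z}_i^L$, while each $\hat{z}_i^L$ is a deterministic function of only $(u^0_w)_{w \in \Ball_i(L)}$.

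\textbf{Steps 1 and 2 (concentration).} For Step 1, given $\bu^0$, Hoeffding's inequality yields $\P\bigl(|\sum_i \hat{\sigma}_i - \sum_i \hat{z}_i^L|\ge s \mid \bu^0\bigr) \le 2 e^{-s^2/(2n)}$; for the bilinear object $\sum_{(i,j)\in E} \hat{\sigma}_i \hat{\sigma}_j$, flipping a single $\hat{\sigma}_v$ alters the sum by at most $2k$ (degree $k$), so conditional McDiarmid produces subgaussian concentration around $\sum_{(i,j)\in E} \hat{z}_i^L \hat{z}_j^L$ with scale $O(k\sqrt{n})$. For Step 2, the conditional means $g_1(\bu^0):=\sum_i \hat{z}_i^L$ and $g_2(\bu^0):=\sum_{(i,j)\in E} \hat{z}_i^L \hat{z}_j^L$ are bounded functions of $\bu^0$ by Assumption~\ref{ass:Bounded}, and by $L$-locality, resampling $u^0_w$ only affects $\hat{z}_i^L$ for $i \in \Ball_w(L)$; since $|\hat{z}_i^L|\le 1$, this changes $g_1$ by at most $2|\Ball_w(L)|$ and $g_2$ by at most $2k|\Ball_w(L)|$. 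A second application of McDiarmid to the i.i.d.\ Gaussians $\bu^0$ then yields unconditional subgaussian tails at a rate depending on $L$ (and $k$) through $|\Ball(L)|$.

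\textbf{Step 3 (the $+\eps$ shifts).} Because the algorithm is balanced, for any vertex $i$ with $\Ball_i(L+1)$ isomorphic to a tree the local dynamics coincide with those on the infinite regular tree, whence $\E \hat{\sigma}_i = 0$. With at most $\eps n$ non-tree vertices this gives $|\E\sum_i \hat{\sigma}_i/n|\le \eps$, so the event $|n^{-1}\sum_i \hat{\sigma}_i|\ge t+\eps$ is contained in the centered event $|n^{-1}\sum_i \hat{\sigma}_i - n^{-1}\E \sum_i\hat{\sigma}_i|\ge t$, and the first bound follows from the concentration in Steps 1--2. The $k\eps$ shift in the second bound similarly absorbs the analogous bias: the at most $k\eps n$ edges incident to non-tree vertices each contribute $\le 2/n$ to $U_G(\hat{\bsigma})$, reducing the claim to the concentration of a tree-comparable version of $U_G$ around its mean, which again follows from Steps 1--2.

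\textbf{Main obstacle.} The delicate point is controlling the McDiarmid constants: the naive estimate $c_w = O(k|\Ball(L)|) = O(k^{L+1})$ leads to exponent $\Omega(nt^2/k^{2L+2})$, weaker than the target $n\eta^2/(Ck)$ appearing in Theorem~\ref{thm:main}. Bridging this gap would require either an Efron--Stein-type argument exploiting that the typical influence of $u^0_w$ on averaged quantities is $O(\sqrt{|\Ball(L)|})$ rather than $|\Ball(L)|$, or allowing $C(L)$ to absorb hidden $k$-dependent factors (still sufficient for the asymptotic statement of Theorem~\ref{thm:main}); in either case, verifying the precise form of the constant is where care will be needed.
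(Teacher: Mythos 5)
Your argument is correct in its mechanics but takes a genuinely different route from the paper. You condition on $\bu^0$, exploit the conditional independence of the $\hat{\sigma}_i$ (conditional Hoeffding for the vertex sum, bounded differences in the rounding randomness for the edge sum), and then control the fluctuations of the conditional means $g_1,g_2$ as functions of $\bu^0$ by a second application of McDiarmid, with difference constants dictated by $|\Ball_w(L)|$; the $\eps$-shift absorbs the bias from the at most $\eps n$ non-tree vertices (for the second inequality no bias correction is actually needed, since it is already centered at $\E U_G(\hat{\bsigma})$, so your Step 3 there is superfluous — which is harmless). The paper instead partitions the set $W(G)$ of tree-like vertices into $C_L$ classes of pairwise distance $\ge 2L+1$ (resp.\ $\ge 2L+3$ for the edge statistic), notes that within a class the summands are genuinely i.i.d.\ by $L$-locality and balancedness, applies Hoeffding classwise with a union bound, and absorbs the non-tree contribution into the $\eps$ (resp.\ $k\eps$) slack. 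The trade-off: the paper's route needs no two-level decomposition and no McDiarmid over Gaussian inputs, while yours avoids the greedy coloring and handles the full edge sum without discarding non-tree edges. Concerning the ``main obstacle'' you flag: your exponent indeed degrades like $n t^2/k^{2L}$ through the ball size, but the paper's argument carries the same hidden $k$-dependence — any partition of $W(G)$ into classes of pairwise distance greater than $2L$ in a $k$-regular graph requires on the order of $k^{2L}$ classes (all tree-like vertices inside a radius-$L$ ball are pairwise within distance $2L$), so the paper's $C_L$, and hence its $C$, also depends on $k$. Thus your proof is on the same footing as the paper's: neither yields a literally $k$-free $C=C(L)$, and for the asymptotic content of Theorem~\ref{thm:main} (fixed $k$ and $\ell$, $n\to\infty$) the $k$-dependent constant suffices, though it weakens the stated form of the probability bound.
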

\begin{proof}
We first prove Eq.~\eqref{eq:sumconc}. Let $W(G)\subseteq V(G)$ consist of all vertices with $L$-neighborhood a tree.
By a simple greedy argument, $W(G)$ can be partitioned into $C_{L,k}$ sets $W_1,\dots,W_{C_{L,k}}\subseteq W(G)$ of vertices such that $d_{G}(i,j) \ge 2L +1$ for any $i,j\in W_q$ in the same set. 
Then for any $1 \le q \le C_{L,k}$, $L$-locality implies that
\[
    S_q=\sum_{i\in W_q}\hat{\sigma}_i
\]
is a sum of i.i.d.\ zero-mean $\pm 1$ random variables. By Hoeffding's inequality, $\P(|S_q| \ge t) \le 2e^{- t^2/(2|W_q|)}$ for all $t \ge 0$. Since
\[
    S := \sum_{i\in W(G)}\hat{\sigma}_i=\sum_{q = 1}^{C_{L,k}}  S_q,
\] 
we obtain by a union bound
\[
    \P\big(|S| \ge t\big) \le \sum_{q =1}^{C_{L,k}} \P(|S_q| \ge t/C_{L,k}) \le 2C_{L,k}e^{-t^2/(2n C_{L,k}^2)}\, ,
\]
where we have upper-bounded $|W_q|$ by $n$ to obtain the last inequality. Finally, since the graph $G$ is $(L,\eps)$-locally treelike, 
\[
    \P\big(\big|\sum_{i \in V(G)} \hat{\sigma}_i \big|\ge t\big) \le \P\big(|S| \ge t -\eps n\big) \le 2C_{L,k}e^{-(t-\eps n)^2/(2n C_{L,k}^2)}\, .
\]
Next, we prove Eq.~\eqref{eq:UGconc} using the same approach. We consider a partitioning of $W(G)$ into $C_{L,k}$ sets $W_1,\dots,W_{C_{L,k}}\subseteq W(G)$ of vertices such that $d_{G}(i,j) \ge 2L +3$ for any $i,j\in W_q$ for any $q$. Let 
\[ 
    S_q = \sum_{i \in W_q} \hat{\sigma}_i \sum_{j \in \partial i} \hat{\sigma}_j \, , ~~~\mbox{and}~~~ S = \sum_{q=1}^{C_{L,k}} S_q \, .
\]
 Since $d_{G}(u,v) \ge 2L +3$ implies $d_{G}(i,j) \ge 2L +1$ for any two vertices $i \in \partial u$ and $j \in \partial v$, it follows that $S_q$ is a sum of i.i.d.\ random variables which are also bounded in absolute value by $k$. Hoeffding's inequality implies $\P(|S_q - \E\{S_q\}| \ge t) \le 2e^{- t^2/(8k^2|W_q|)}$ for all $t \ge 0$. We finish the argument similarly, using that $| 2 n U_G(\hat{\bsigma}) - S| \le k \eps $.      
\end{proof}
The next lemma is useful to control error terms in the rounding 
process. 
\begin{lemma}\label{lem:TreeEigenvalue}
Let $\{(\xi_v,\txi_v)\}_{v\in V(T)}$ be a block factor of IID on the infinite $k$-regular 
tree $T$ taking values in $\R^2$, such that $\E\{\xi_v\}=\E\{\txi_v\}=0$ (see Definition~\ref{def:blockfactor}).
For $(u,v)\in E(T)$ an edge, we have 
\begin{align}
\big|\E\{\xi_u\xi_v\}\big|\le \frac{2\sqrt{k-1}}{k}\, \E\{\xi_u^2\}\, ,\;\;\;\;\;
\big| \E\{\xi_u\txi_v\} \big| \le \frac{2\sqrt{k-1}}{k}\, \E\{\xi_u^2\}^{1/2}\E\{\txi_v^2\}^{1/2}\, .
\label{eq:GeneralXi}
\end{align}
\end{lemma}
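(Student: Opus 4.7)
The plan is to reduce the inequalities, which concern correlations of block-factor processes on the infinite tree $T$, to spectral bounds on adjacency matrices of a carefully chosen sequence of finite $k$-regular graphs that converges locally to $T$. The key point is that $2\sqrt{k-1}$ is the spectral radius of the adjacency operator on $T$, and by Friedman's theorem this is also, asymptotically, the second-largest eigenvalue (in absolute value) of the adjacency matrix of a uniformly random $k$-regular graph. I therefore fix a sequence $(G_n)_{n\ge 1}$ of $k$-regular graphs with $|V(G_n)|=n$ such that $G_n$ is $(\ell',o_n(1))$-locally treelike for every fixed $\ell'\ge 1$, and $\lambda_2(\bA_{G_n})\le 2\sqrt{k-1}+o_n(1)$, where $\bA_{G_n}$ is the adjacency matrix. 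Such a sequence exists by Friedman's theorem applied to uniformly random $k$-regular graphs, or alternatively via explicit Ramanujan constructions.

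On each $G_n$ I define $\xi^n_v := F(\Ball_v(\ell),\bomega_{\Ball_v(\ell)})$ and $\tilde\xi^n_v := \tilde F(\Ball_v(\ell),\bomega_{\Ball_v(\ell)})$ using fresh i.i.d.\ labels $(\bomega_v)$, with the same block-factor functions $(F,\tilde F)$ that generate $(\xi_v,\tilde\xi_v)$ on $T$. Whenever the $\ell$-neighborhood in $G_n$ of a directed edge $(u,v)$ is a tree---which holds for all but $o_n(n)$ directed edges by the local-treelike property---the pair $(\xi^n_u,\tilde\xi^n_v)$ has the same joint law as on $T$. I then compute $n^{-1}\E\{\langle \xi^n,\bA_{G_n}\xi^n\rangle\}$ in two ways. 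Expanding over directed edges and using local convergence,
\begin{align*}
\frac{1}{n}\E\big\{\langle \xi^n,\bA_{G_n}\xi^n\rangle\big\}
= \frac{1}{n}\sum_{(u,v)\in \vE(G_n)}\E\{\xi^n_u\xi^n_v\}
\xrightarrow[n\to\infty]{} k\,\E\{\xi_u\xi_v\}\, .
\end{align*}
For the matching upper bound, decompose $\xi^n = P^n\xi^n+\bar\xi^n\,\bfone$ where $\bar\xi^n = n^{-1}\sum_v\xi^n_v$ and $P^n$ is the orthogonal projection onto $\bfone^\perp$. Since $\bA_{G_n}\bfone = k\bfone$, the cross terms vanish and $\langle \xi^n,\bA_{G_n}\xi^n\rangle = \langle P^n\xi^n,\bA_{G_n}P^n\xi^n\rangle + kn(\bar\xi^n)^2$. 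The mean-zero assumption $\E\{\xi_v\}=0$ combined with $L$-dependence of the block factor gives $\E\{(\bar\xi^n)^2\}=O(1/n)$, so using the spectral gap on $\bfone^\perp$,
\begin{align*}
\frac{1}{n}\E\big\{\langle \xi^n,\bA_{G_n}\xi^n\rangle\big\}
\le (2\sqrt{k-1}+o_n(1))\,\frac{\E\{\|\xi^n\|^2\}}{n}+O(1/n)
\xrightarrow[n\to\infty]{} 2\sqrt{k-1}\,\E\{\xi_v^2\}\, .
\end{align*}
Combining the two expressions (and using the two-sided spectral bound by applying the same argument to $-\xi$) yields the first inequality of~\eqref{eq:GeneralXi}.

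The second inequality follows from the same template applied to the bilinear form $\langle \xi^n,\bA_{G_n}\tilde\xi^n\rangle$. Since $T$ is transitive on directed edges and admits an automorphism swapping the endpoints of any edge, $\E\{\xi_u\tilde\xi_v\}$ is a single constant, and local convergence yields $n^{-1}\E\{\langle \xi^n,\bA_{G_n}\tilde\xi^n\rangle\}\to k\,\E\{\xi_u\tilde\xi_v\}$. In the other direction, Cauchy--Schwarz for the bilinear form of $\bA_{G_n}$ combined with its operator-norm bound on $\bfone^\perp$ gives $|\langle P^n\xi^n,\bA_{G_n}P^n\tilde\xi^n\rangle|\le (2\sqrt{k-1}+o_n(1))\,\|P^n\xi^n\|\,\|P^n\tilde\xi^n\|$, and the contribution of the mean directions is $O(1)$ by Cauchy--Schwarz on $\bar\xi^n, \bar{\tilde\xi}^n$, hence negligible after dividing by $n$. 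The main obstacle is securing the near-Ramanujan spectral bound on the locally-treelike sequence; Friedman's theorem handles this cleanly but is a substantial external input. The mean-zero projection is a secondary but routine complication, handled by the $L$-dependence of the block factor together with $\E\{\xi_v\}=0$, which ensures $\bar\xi^n = O_p(n^{-1/2})$.
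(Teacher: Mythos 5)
Your approach is essentially the paper's: transfer the block factor to a sequence of finite near-Ramanujan $k$-regular graphs, evaluate the adjacency quadratic form, separate out the $\bfone$ direction (whose contribution is $O(n)$ because the process is centered with finite-range dependence), apply the $2\sqrt{k-1}$ spectral bound on $\bfone^\perp$, and let $n\to\infty$; the paper phrases the same computation with the covariance matrix $\bQ$, via $\E\{\langle \xi^n,\bA_{G_n}\xi^n\rangle\}=\Tr(\bA_{G_n}\bQ)$, which is equivalent to your random-vector formulation.

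Two points in your write-up need repair. First, you only assume the $G_n$ are locally treelike, but $F$ and $\tF$ are merely measurable, and the lemma provides no moment control for these functions evaluated on non-tree neighborhoods; consequently the $o_n(n)$ exceptional vertices and edges could contribute arbitrarily (even infinitely) to $\E\{\|\xi^n\|^2\}$ and to the edge sum, so neither of your claimed limits $n^{-1}\E\{\langle \xi^n,\bA_{G_n}\xi^n\rangle\}\to k\,\E\{\xi_u\xi_v\}$ nor $n^{-1}\E\{\|\xi^n\|^2\}\to\E\{\xi_v^2\}$ is justified as stated. The paper sidesteps this by taking $G_n$ of girth larger than $2\ell+2$ (random regular graphs conditioned on this girth event, which has probability bounded away from zero, combined with Friedman's theorem), so that every local law matches the tree exactly; you should do the same. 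Second, for the first inequality the lower bound does not follow ``by applying the same argument to $-\xi$'': since $\E\{(-\xi_u)(-\xi_v)\}=\E\{\xi_u\xi_v\}$, that substitution reproduces the identical upper bound. What you actually need is the two-sided estimate $|\langle P^n\xi^n,\bA_{G_n}P^n\xi^n\rangle|\le (2\sqrt{k-1}+o_n(1))\,\|P^n\xi^n\|^2$, i.e.\ control of $\max(|\lambda_2|,|\lambda_n|)$ on $\bfone^\perp$ --- exactly the operator-norm bound you already invoke for the cross term --- or the paper's alternative of rerunning the argument on bipartite graphs. With these adjustments your argument coincides with the paper's proof.
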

\begin{proof}
A more general statement appears in~\cite{backhausz2015ramanujan}, but we give a simple 
argument here for completeness. 
Let $(F,\tF): \cG_*(k,\ell;\Omega)\to \R^2$ be the function that witnesses $\{(\xi_v,\txi_v)\}_{v\in V(T)}$ being a block factor of IID.
For $\cS\subseteq \N$ a diverging sequence, 
let $(G_{n})_{n\in \cS}$ be a sequence of $k$-regular graphs of girth larger than $2\ell+2$, such that $\lim_{n\to\infty}\lambda_2(G_n)\le 2\sqrt{k-1}$, $\lim_{n\to\infty}\lambda_n(G_n)\le -2\sqrt{k-1}$.
 Here $k=\lambda_1(G)\geq \lambda_2(G)\geq\dots\geq\lambda_n(G)$ are the (real) eigenvalues of the adjacency matrix of $G$, and the condition on $(\lambda_2,\lambda_n)$ amounts to the $G_n$ being nearly Ramanujan.
 Such a sequence can be constructed by sampling $k$-regular graphs conditional
 on the girth being at least $2\ell+2$ (an event that has probability bounded away from zero
 for each constant $\ell$), and applying Friedman's theorem \cite{friedman2008proof}.
 
 We then construct random variables  $(\omega_i)_{i\in V(G_n)}\sim_{iid}\nu$,
 and  $\{(z_v,\tz_v)\}_{v\in V(G_n)}$, by
 letting $z_v = F(\Ball_v(\ell),\bomega_{\Ball_v(\ell)})$,  
 $\tz_v = \tF(\Ball_v(\ell),\bomega_{\Ball_v(\ell)})$, where $v\in V(G)$ and $\Ball_v(\ell)$
 is the ball of radius $\ell$ in $G_n$. 
 Note that, for any edge $(i,j)\in E(G_n)$, we have $\E\{\xi_u\xi_v\} = \E\{z_iz_j\}$,
 $\E\{\xi_u\}= \E\{z_i\} = 0$, and $\E\{\xi^2_u\}= \E\{z^2_i\}$.
 Let $\bQ\in\R^{n\times n}$ be the second moment matrix $Q_{ij} = \E\{z_iz_j\}$.
 Note that $\<\bfone,\bQ\bfone\>\le C(\ell)n$.
 Letting $\bA_n$ be the adjacency matrix of $G_n$,
 we have
 \begin{align*}
 \E\{\xi_u\xi_v\} & = \frac{1}{2|E(G_n)|}\E\{\<\bz,\bA_n\bz\>\} \\
 & = \frac{1}{nk} \Tr\big(\bA_n \bQ\big) \\
 &\le \frac{1}{nk}\lambda_2(G_n)\Tr(\bQ)+\frac{1}{n^2 k} \lambda_1(G_n) \<\bfone,\bQ\bfone\>\\
 &\le \frac{2\sqrt{k-1}}{k}\E\{\xi_v^2\}+ \frac{C'(\ell)}{n}\, .
\end{align*}
By taking the limit $n\to\infty$ limit, we obtain $\E\{\xi_u\xi_v\}\le 2k^{-1}\sqrt{k-1}\E\{\xi_v^2\}$.
By a similar argument (using bipartite graphs) we deduce 
$\E\{\xi_u\xi_v\}\ge -2k^{-1}\sqrt{k-1}\E\{\xi_v^2\}$. The second inequality
in \eqref{eq:GeneralXi} follows by applying the first inequality to the processes $z_u^{+} = \xi_u + \lambda \txi_u$ 
and $z_u^{-}= \xi_u - \lambda \txi_u$, subtracting the two inequalities, and setting 
$\lambda = \sqrt{\E\{\xi_u^2\}/\E\{\txi_u^2\}}$ to obtain the upper bound, and 
$\lambda= -  \sqrt{\E\{\xi_u^2\}/\E\{\txi_u^2\}}$ to obtain the lower bound.
\end{proof}
Next we build on the results of the previous sections to relate  $\E U_G( \hat{\bsigma})$ to the Parisi formula.  
\begin{lemma}\label{lem:diff}
Let $(\root(T), v(T))$ be a fixed edge in the infinite $k$-regular tree $T$,
and denote by $z^L_{\root(T)}$, $z^L_{v(T)}$, the estimates of the message passing
algorithm on $T$. 
For $\eta > 0$ there exists $\delta_0(\eta), C(\eta) > 0, n_0 \ge 1$ such that for all 
$\delta \le \delta_0(\eta)$, there exists $k_0 = k_0(\eta,\delta)$ such that the following holds for all 
$k \ge k_0$. Let $G$ be $(L,\eps)$-locally treelike $k$-regular graph on $n \ge n_0$ vertices with $L = \lfloor(1-\eta)/\delta\rfloor$. We have 
\begin{align}
\Big| \E U_G(\hat{\bz}^L) - \frac{k}{2}\E\{z^L_{\root(T)} z^L_{v(T)}\} \Big| \le C(\eta)\sqrt{(k-1)\delta} + 2 k\eps \, .
\end{align} 
\end{lemma}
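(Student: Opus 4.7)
The plan is to split the target $\E U_G(\hat{\bz}^L) - (k/2)\E\{z_{\root(T)}^L z_{v(T)}^L\}$ into (i) a graph-vs-tree piece and (ii) a truncation piece on the tree. For (i), I would call a vertex $i$ \emph{good} if $\Ball_i(L+1)$ is a tree, so that at least $(1-\eps)n$ vertices are good. For any edge incident to a good vertex, the $L$-locality of the algorithm (it depends only on $\Ball_i(L)$) gives $\E\{\hat{z}_i^L \hat{z}_j^L\} = \E\{\hat{z}_{\root(T)}^L \hat{z}_{v(T)}^L\}$. Since bad edges number at most $k\eps n$ and each summand satisfies $|\hat{z}_i^L \hat{z}_j^L|\le 1$, one easily gets $|\E U_G(\hat{\bz}^L) - (k/2)\E\{\hat{z}_{\root(T)}^L \hat{z}_{v(T)}^L\}|\le 2k\eps$, accounting for the second term in the target bound.

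For (ii), I would set $\eta_u := \hat{z}_u^L - z_u^L$ and decompose on the tree $\hat{z}_o^L \hat{z}_v^L - z_o^L z_v^L = \eta_o z_v^L + z_o^L \eta_v + \eta_o \eta_v$. Both $z_u^L$ and $\eta_u$ are centered block factors of IID: the centering comes from the sign-symmetry of the algorithm (since $\Phi_\gamma(t,\cdot)$ is even, $a(t,\cdot)$ is even and $b(t,\cdot)$ is odd, so the message passing commutes with $\bu^0\mapsto-\bu^0$). Lemma~\ref{lem:TreeEigenvalue} then yields $|\E\{\hat{z}_o^L \hat{z}_v^L - z_o^L z_v^L\}|\le (2\sqrt{k-1}/k)\bigl(2\sqrt{\E\{(z_o^L)^2\}\,\E\{\eta_o^2\}} + \E\{\eta_o^2\}\bigr)$. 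A direct computation via Lemma~\ref{lemma:Umoment} together with Lemma~\ref{lem:Delta1} (vertex version) and the normalization condition~\eqref{eq:Normalization} bounds $\E\{(z_v^L)^2\}$ by a constant $C(\eta)$ uniformly in $k$, so the bound reduces to establishing the key estimate $\E\{\eta_v^2\} = \E\{(|z_v^L|-1)_+^2\} \le C(\eta)\delta + o_k(1)$.

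The crux is to show that the continuum-limit process $Z_t$ is almost surely bounded by $1$. Applying It\^o's formula to $M_t := \partial_x\Phi_\gamma(t,X_t)$ and using the PDE~\eqref{eq:parisiPDE} differentiated in $x$, the drift of $M_t$ cancels identically and $dM_t = \partial_{xx}\Phi_\gamma(t,X_t)\,dB_t = dZ_t$; together with $M_0=\partial_x\Phi_\gamma(0,0)=0$ (by evenness of $\Phi_\gamma(t,\cdot)$), this gives $Z_t = \partial_x\Phi_\gamma(t,X_t)$, which is bounded by $1$ by Proposition~\ref{prop:phireg}$(c)$. Hence $(|Z_{L\delta}|-1)_+\equiv 0$, and using the pointwise inequality $(|a|-1)_+\le |a-Z_{L\delta}|$ together with the $L^2$-approximation $\E\{(Z^\delta_L-Z_{L\delta})^2\}\le C\delta$ from Proposition~\ref{prop:SDEdisc} and the convergence $z_v^L\Rightarrow Z^\delta_L$ from Theorem~\ref{thm:clt} applied to the pseudo-Lipschitz function $(|\cdot|-1)_+^2$, one concludes $\E\{(|z_v^L|-1)_+^2\}\le C(\eta)\delta + o_k(1)$. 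Multiplying through delivers $(k/2)|\E\{\hat{z}_o^L \hat{z}_v^L - z_o^L z_v^L\}|\le C(\eta)\sqrt{(k-1)\delta}$. The main obstacle is precisely this PDE identity $Z_t = \partial_x\Phi_\gamma(t,X_t)$: without it one only knows $\E\{Z_t^2\}\le 1$, which would make $\E\{(|Z_{L\delta}|-1)_+^2\}$ a positive constant of order $1$ and destroy the $\sqrt\delta$ factor in the target bound.
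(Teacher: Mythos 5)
Your proposal is correct and follows essentially the same route as the paper: reduce to tree-like edges (cost $2k\eps$), control the truncation error on the tree via Lemma~\ref{lem:TreeEigenvalue}, and bound $\E\{(\hat z^L_v-z^L_v)^2\}$ by $C(\eta)\delta+o_k(1)$ using Theorem~\ref{thm:clt}, Proposition~\ref{prop:SDEdisc}, and the a.s.\ bound $|Z_t|\le 1$, which you rederive by It\^o's formula but which is exactly Eq.~\eqref{eq:DxIntegral} of Proposition~\ref{prop:Xt} combined with Proposition~\ref{prop:phireg}$(c)$. The only minor deviation is that you pair the error $\eta_u$ with $z^L_v$ (requiring a uniform bound on $\E\{(z^L_v)^2\}$, which you correctly supply) whereas the paper pairs it with $\hat z^L_v$, whose second moment is trivially at most~$1$.
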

\begin{proof}
Let $E_0\subseteq E$ be the subset of edges $(i,j)\in E$ such that 
$\Ball_i(\ell)$ and $\Ball_j(\ell)$ are both $k$-regular trees. Define
\begin{align*}
U_{G,0}(\bsigma):= \frac{1}{n}\sum_{(i,j)\in E_0}\sigma_i\sigma_j\, .
\end{align*}
Note that $|E\setminus E_0|\le kn\eps$ and therefore 
$\big|\E U_{G}(\hbz^L)-\E U_{G,0}(\hbz^L)\big|\le k\eps$. On the other hand, for any 
$(i,j)\in E_{0}$ the pair of random variables $(z_i^L,z_j^L)$ is distributed as
$(z_{\root(T)}^L,z_{v(T)}^L)$, and therefore
\begin{align*}
\Big| \E U_G(\hat{\bz}^L) - \frac{k}{2}\E\{z^L_{\root(T)} z^L_{v(T)}\} \Big|& \le
\Big| \E U_{G,0}(\hat{\bz}^L) - \frac{k}{2}\E\{z^L_{\root(T)} z^L_{v(T)}\} \Big| +k\eps\\
&= \Big| \frac{|E_0|}{n}\E\{\hz^L_{\root(T)} \hz^L_{v(T)}\} - \frac{k}{2}\E\{z^L_{\root(T)} z^L_{v(T)}\} \Big| +k\eps\\
&\le \frac{k}{2}
\big| \E\{(z^L_{\root(T)}-\hz^L_{\root(T)}) (z^L_{v(T)}-\hz^L_{v(T)})\} 
+2\E\{(z^L_{\root(T)} - \hz^L_{\root(T)})\hz^L_{v(T)}\} \big| +
2k\eps\\
&\le \sqrt{k-1}\E\big\{(z^L_{\root(T)} -\hz^L_{\root(T)})^2\big\} +
2\sqrt{k-1} \E\big\{(z^L_{\root(T)} -\hz^L_{\root(T)})^2\big\}^{1/2}+2k\eps\,.
\end{align*}
Here the last step follows from Lemma~\ref{lem:TreeEigenvalue}. 
Next, we define the function $\psi(x) = \min_{y \in [-1,1]} (x-y)^2$. Since 
$\E\big\{\big(\hat{z}_o^L - z_o^L\big)^2\big\} = \E\psi(z_o^L)$,
we obtain by Theorem~\ref{thm:clt},
\begin{align*}
\lim_{k \to \infty} \E\big\{\big(\hat{z}_o^L - z_o^L\big)^2\big\} &= \E\psi\big(Z^{\delta}_L\big)\\
&\stackrel{(a)}{=}\big|\E\psi\big(Z^{\delta}_L\big) - \E\psi\big(Z_{\delta L}\big) \big| \\
&\stackrel{(b)}{\le} C(\eta)\delta\, .
\end{align*}
Here in $(a)$ we are using the fact that $|Z_t| \le 1$ a.s. for all $t$, which follows by the definition \eqref{eq:Z-cont} of $Z_t$ combined with \eqref{eq:DxIntegral} and Proposition~\ref{prop:phireg}, part $(c)$. Indeed these imply that $\psi(Z_{\delta L})=0$. In step $(b)$ we used
 Proposition~\ref{prop:SDEdisc}.
The desired bound follows by combining the last two displays. 
\end{proof}

\begin{lemma}\label{lem:UG}
For $\eta > 0$ there exists $\delta_0(\eta) > 0$ and $n_0(\eta) \ge 1$ such that for all $\delta \le \delta_0(\eta)$, 
 there exists $k_0 = k_0(\eta,\delta)$ such that the following holds for all $k \ge k_0$.
 Let $G$ be a $(L,\eps)$-locally treelike $k$-regular graph on $n \ge n_0$ vertices with $L = \lfloor(1-\eta)/\delta\rfloor$. We have
\[ \frac{\E U_G(\hbz^L)}{\sqrt{k-1}}  \geq \sfP_\star - 2\eta  - 4 \sqrt{k}\, \eps\, , \] 
\end{lemma}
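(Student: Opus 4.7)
This is an assembly step: I would combine Lemma~\ref{lem:diff}, which compares $\E U_G(\hbz^L)$ with the tree correlation $\frac{k}{2}\E\{z^L_{\root(T)} z^L_{v(T)}\}$, with Proposition~\ref{prop:discr}, which evaluates the limit of the normalized tree correlation as first $k\to\infty$ and then $\delta\to 0$.

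Starting from Lemma~\ref{lem:diff}, dividing through by $\sqrt{k-1}$ yields
\begin{equation*}
\frac{\E U_G(\hbz^L)}{\sqrt{k-1}}
\;\ge\;
\frac{k}{2\sqrt{k-1}}\,\E\{z^L_{\root(T)}z^L_{v(T)}\}
\;-\;C(\eta)\sqrt{\delta}\;-\;\frac{2k\eps}{\sqrt{k-1}}.
\end{equation*}
The elementary inequality $k/\sqrt{k-1}\le 2\sqrt{k}$, valid for all $k\ge 2$, bounds the last term by $4\sqrt{k}\,\eps$.

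Next I would apply Proposition~\ref{prop:discr}: for any $\eta>0$, there exists $\delta_1(\eta)>0$ such that for every $\delta\le\delta_1(\eta)$, taking $L=\lfloor(1-\eta)/\delta\rfloor$,
\begin{equation*}
\lim_{k\to\infty}\frac{k}{2\sqrt{k-1}}\,\E\{z^L_{o}z^L_{v}\}\;\ge\;\sfP_\star-\eta.
\end{equation*}
Hence for each fixed $\delta\le\delta_1(\eta)$ there is $k_1(\eta,\delta)$ such that for all $k\ge k_1(\eta,\delta)$ the quantity $\frac{k}{2\sqrt{k-1}}\E\{z^L_{o}z^L_{v}\}$ exceeds $\sfP_\star-\tfrac{3}{2}\eta$.

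Finally, I would choose $\delta_0(\eta)\le\delta_1(\eta)$ small enough that $C(\eta)\sqrt{\delta_0}\le \eta/2$. With that choice, and letting $k_0(\eta,\delta)$ absorb both the threshold $k_1(\eta,\delta)$ and the threshold from Lemma~\ref{lem:diff}, combining the three displays above gives
\begin{equation*}
\frac{\E U_G(\hbz^L)}{\sqrt{k-1}}\;\ge\;\sfP_\star - \tfrac{3}{2}\eta - \tfrac{1}{2}\eta - 4\sqrt{k}\,\eps\;=\;\sfP_\star-2\eta-4\sqrt{k}\,\eps,
\end{equation*}
as required. There is no real obstacle; the only care needed is in the order of quantifiers, namely choosing $\delta$ first (to kill the $C(\eta)\sqrt{\delta}$ error and to invoke Proposition~\ref{prop:discr}), and then choosing $k$ large depending on both $\eta$ and $\delta$.
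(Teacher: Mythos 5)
Your proposal is correct and follows essentially the same route as the paper: apply Lemma~\ref{lem:diff}, divide by $\sqrt{k-1}$ (using $k/\sqrt{k-1}\le 2\sqrt{k}$ to get the $4\sqrt{k}\,\eps$ term), and invoke Proposition~\ref{prop:discr} with the correct quantifier order, first shrinking $\delta$ to absorb the $C(\eta)\sqrt{\delta}$ error and the $\delta$-limit slack, then taking $k$ large. The only difference is bookkeeping: the paper splits the error budget into three $\eta/3$ pieces via an explicit ${\rm err}(k,\delta)$ term, while you split it as $\eta/2+\eta$, which is immaterial.
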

\begin{proof}
For any $\delta<\delta_0(\eta)$, $k \ge k_0(\eta,\delta)$ as per Lemma \ref{lem:diff},
we have 
\begin{align*}
\frac{\E U_G(\hbz^L)}{\sqrt{k-1}} & \ge  \frac{1}{\sqrt{k-1}} \Big(\frac{k}{2}\E\{z^L_{\root(T)} z^L_{v(T)}\} 
-C(\eta)\sqrt{(k-1)\delta}-2k\eps \Big)\\
&\ge \Par_{\star}-\eta-{\rm err}(k,\delta)-C(\eta)\sqrt{\delta}-4\sqrt{k}\eps\, , 
\end{align*}
where $\lim_{\delta\to 0}\lim_{k\to\infty}{\rm err}(k,\delta) = 0$ by Lemma
\ref{prop:discr}. We can decrease $\delta_0(\eta)$ so that $C(\eta)\sqrt{\delta}\le \eta/3$
and \[\limsup_{k\to\infty} \, {\rm err}(k,\delta)\le \eta/3\]
for all $\delta\le \delta_0(\eta)$.
Finally, we increase $k_0(\eta,\delta)$ such that 
\[{\rm err}(k,\delta)-\limsup_{k\to\infty}\, {\rm err}(k,\delta)\le \eta/3 \, .\]
\end{proof}

We are now ready to prove Theorem~\ref{thm:main}.
\begin{proof}[Proof of Theorem~\ref{thm:main}]
 We obtain a feasible solution $\bsigma^1\in \{-1,+1\}^n$ to the minimum 
bisection problem by using the message passing algorithm with
non-linearities $\{F_\ell , H_\ell\}_{\ell \le L}$ as described in 
Section~\ref{sec:continuum} and the rounding procedure described above. 
We obtain a vector $\tilde{\bsigma}^1 \in \{-1,+1\}^n$ which by Lemma~\ref{lem:concent}, Eq.~\eqref{eq:eqU}, and Lemma~\ref{lem:UG} is such that  
\begin{align*}
\frac{U_G(\tilde{\bsigma}^1)}{\sqrt{k-1}} \ge  \sfP_\star - 2\eta - 8 \sqrt{k} \eps\, , ~~~
\mbox{and}~~~ \big|\sum_{i\in V(G)} \tilde{\sigma}^1_i\big| \le 2\eps  n \, ,
\end{align*}
 with probability at least $1-Ce^{-Cn\eps^2}$, with $C=C(L,k)$. To obtain an exactly balanced partition $\bsigma^1$ (i.e., $\sum_i \sigma_i^1 = 0$),
 it suffices to change the signs of $\eps n$  many vertices. The effect of this 
 change on the normalized cut value $U_G$ is no more than $k \eps$. We then choose $\eps= \eta/\sqrt{k}$ to obtain $U_G(\bsigma^1)/\sqrt{k-1} \ge \sfP_\star - 10 \eta$.    

Next, a feasible solution $\bsigma^2 \in \{-1,+1\}^n$ to the maximum cut problem with $-U_G(\bsigma^2)/\sqrt{k-1} \ge \sfP_\star - 10\eta$
is obtained similarly, by rounding the output of the message passing algorithm 
with non-linearities $\{-F_\ell , H_\ell\}_{\ell \le L}$. (Note that the resulting partition need not be balanced in this case.)
\end{proof}

\subsection*{Acknowledgements}
A.M. was partially supported
by the NSF grant CCF-2006489 and the ONR grant N00014-18-1-2729. M.S. was partially supported by an NSF graduate research fellowship and a Stanford graduate fellowship.
Parts of this work were completed while the authors were virtually participating in the Fall 2020 program on Probability, Geometry and Computation in High Dimensions at the Simons Institute for the Theory of Computing.    

\bibliographystyle{alpha}
\providecommand{\bysame}{\leavevmode\hbox to3em{\hrulefill}\thinspace}
\providecommand{\MR}{\relax\ifhmode\unskip\space\fi MR }
\providecommand{\MRhref}[2]{%
  \href{http://www.ams.org/mathscinet-getitem?mr=#1}{#2}
}

\newcommand{\etalchar}[1]{$^{#1}$}

\end{document}